\newcounter{thm_compactness}
\newcounter{specialization_compactness}
\newcommand{\tp}{\operatorname{tp}}
\newcommand{\BB}{\mathcal{B}}
\newcommand{\CC}{\mathcal{C}}
\newcommand{\FF}{\mathcal{F}}
\newcommand{\GG}{\mathcal{G}}
\newcommand{\LL}{\mathcal{L}}
\newcommand{\MM}{\mathcal{M}}
\newcommand{\NN}{\mathcal{N}}
\newcommand{\SSS}{\mathcal{S}}
\newcommand{\UU}{\mathcal{U}}
\newcommand{\XX}{\mathcal{X}}
\newcommand{\DD}{\mathcal{D}}
\newcommand{\lemF}{F_0}
\newcommand{\lemH}{F_1}
\newcommand{\Mon}{\mathcal{U}}
\newcommand{\ar}{\rangle}
\newcommand{\al}{\langle}
\newcommand{\exR}{M \cup \{-\infty, +\infty\}}
\newcommand{\vc}{\text{vc}}
\newcommand{\VC}{\text{VC}}
\newcommand{\mytau}{\tilde{\tau}}
\newcommand{\taulim}{\lim^{\tau}}
\newcommand{\mytaulim}{\lim^{\mytau}}
\newcommand{\dcl}{\text{dcl}}
\newenvironment{claimproof}[1][\proofname]
               {
                 \proof[#1]
                 
               }
               {
                 \endproof
               }
\theoremstyle{definition}
\newtheorem{definition}{Definition}[section]
\newtheorem{remark}[definition]{Remark}
\newtheorem{example}[definition]{Example}
\newtheorem{fact}[definition]{Fact}
\theoremstyle{plain}
\newtheorem{thmx}{Theorem}
\newtheorem{lemma}[definition]{Lemma}
\newtheorem{proposition}[definition]{Proposition}
\newtheorem{corollary}[definition]{Corollary}
\newtheorem{claim}[definition]{Claim}
\newtheorem*{question*}{Question}
\title{Definable compactness in o-minimal structures}
\author{Pablo And\'ujar Guerrero}
\date{}
\begin{document}
\maketitle 

\noindent
{\small \emph{Mathematics Subject Classification 2020.} 03C64 (Primary); 54A05, 54D30 (Secondary). \\
\emph{Key words.} o-minimality, types, definable compactness, definable topological spaces.} 

\begin{abstract}
We characterize the notion of definable compactness for topological spaces definable in o-minimal structures, answering questions in~\cite{pet_stein_99} and~\cite{johnson14}. Specifically, we prove the equivalence of various definitions of definable compactness in the literature, including those in terms of definable curves, definable types and definable downward directed families of closed sets. 
\end{abstract}

\section{Introduction}\label{section:intro}

In the study of first-order topological theories various definable notions of topological compactness have been helpful tools in tame settings by isolating classes of topological objects with desirable properties. 
The first of such notions was introduced in \emph{o-minimal} theories for definable manifold spaces by Peterzil and Steinhorn~\cite{pet_stein_99}, and corresponds to the property that every definable curve converges (here curve-compactness). This property was crucial in formulating Pillay's conjecture about o-minimal definably compact groups and their relationship with compact Lie groups \cite[Conjecture 1.1]{pillay-conj}. The research that led to the solution of this conjecture provided a deeper understanding of the relationship between neostability and tame topology, in particular results in o-minimal forking were used to reach another reasonable notion of o-minimal definable compactness~\cite{pet_pillay_07}: for every definable family of closed sets with the finite intersection property there exists a finite set that intersects each set in the family (here transversal-compactness). On the other hand, Thomas~\cite{thomas12} and Walsberg~\cite{walsberg15} generalized and applied curve-compactness to study topologies arising from o-minimal definable norms and metrics respectively. In collaboration with the author~\cite{atw1}, they also explored a third notion of definable compactness within o-minimality: every downward directed definable family of nonempty closed sets has nonempty intersection (here filter-compactness). This definition has been independently studied by Johnson~\cite{johnson14} in the context of o-minimal quotient spaces, and in a general model-theoretic setting by Fornasiero~\cite{fornasiero}. The o-minimal exploration of definable compactness (through the various notions mentioned above) has yielded in particular that, in many cases, definably compact spaces are definably homeomorphic to a set with the canonical o-minimal ``Euclidean" topology (see Chapter 7 in~\cite{andujar_thesis}). Hrushovski and Loeser explored the tame topology of valued fields~\cite{hruloeser16}, including the introduction of yet another notion of definable compactness: every definable type has a limit (here type-compactness), where a limit is a point in every closed set in the type. Recently, some of these notions have also been approached in the p-adic setting by Johnson and the author~\cite{and-johnson}, and in the local o-minimal setting by Fujita~\cite{fujita23}. 

In the present paper we prove the equivalence of all the above notions of definable compactness in the setting of Hausdorff definable topological spaces (Definition~\ref{def:dts}) in o-minimal structures. We also show that, if we drop the Hausdorffness assumption, curve-compactness is strictly weaker than all the other properties. Our main result is the following (see Sections~\ref{sec:prelim-intersec} and~\ref{sec:prelim-top} for definitions).

\begin{thmx}\label{thm:intro_compactness} Fix an o-minimal structure $\MM=(M, <, \ldots)$. Let $(X,\tau)$ be a definable topological space in $\MM$. The following are equivalent. 
\begin{enumerate}[(1)]
\item \label{itm:compactness_1} Every downward directed definable family of nonempty $\tau$-closed sets has nonempty intersection (filter-compactness). 
\item \label{itm:compactness_2} Every definable type $p\in S_X(M)$ has a limit, i.e. there is a point in the intersection of every $\tau$-closed set in $p$ (type-compactness). 
\item \label{itm:compactness_2.5} Every definable family of $\tau$-closed sets that extends to a definable type in $S_X(M)$ has nonempty intersection.
\item \label{itm:compactness_3} Every consistent definable family of $\tau$-closed sets admits a finite transversal, i.e. there exists a finite set that intersects every set in the family (transversal-compactness).
\item \label{itm:compactness_5} Every definable family $\CC$ of $\tau$-closed sets with the $(m,n)$-property, where $m\geq n > \dim \cup\CC$, has a finite transversal.
\item \label{itm:compactness_4} Every definable family $\CC$ of $\tau$-closed sets with the $(m,n)$-property, where $m\geq n$ and $n$ is greater than the $\VC$-codensity of $\CC$, has a finite transversal.
\setcounter{thm_compactness}{\value{enumi}}
\end{enumerate}
Moreover all the above imply and, if $\tau$ is Hausdorff or $\MM$ has definable choice, are equivalent to:
\begin{enumerate}[(1)]
\setcounter{enumi}{\value{thm_compactness}}
\item \label{itm:compactness_6} Every definable curve in $X$ is $\tau$-completable (curve-compactness). 
\end{enumerate}
\end{thmx}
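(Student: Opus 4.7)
The plan is to establish the cycle (4) $\Rightarrow$ (1) $\Rightarrow$ (3) $\Leftrightarrow$ (2) together with (6) $\Rightarrow$ (5) $\Rightarrow$ (4), which leaves the main implication (1) $\Rightarrow$ (6) to close the loop, and then to handle (7) separately. For (4) $\Rightarrow$ (1), a downward directed definable family $\CC$ of nonempty closed sets has the finite intersection property, so (4) gives a finite transversal $T = \{t_1, \ldots, t_n\}$; by directedness and pigeonhole some $t_i$ lies in every $C \in \CC$ (otherwise one could pick $C_i \in \CC$ with $t_i \notin C_i$ and use directedness to produce $E \subseteq \bigcap_i C_i$ in $\CC$, contradicting $E \cap T \neq \emptyset$). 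For (1) $\Rightarrow$ (3), given a definable family $\CC$ extending to a definable type $p$, the collection of all $\tau$-closed sets of $X$ lying in $p$ forms a definable family $\DD$---definable because closed sets in $X$ are uniformly parameterized (as $(X, \tau)$ is a definable topological space) and membership in $p$ is definable via the $p$-schema---which is downward directed since $p$ is a type; applying (1) to $\DD$ yields a common point in $\bigcap \DD \subseteq \bigcap \CC$. The equivalence (2) $\Leftrightarrow$ (3) is then immediate via the correspondence $p \leftrightarrow \{C \in p : C \text{ closed}\}$. Finally, (6) $\Rightarrow$ (5) uses the known o-minimal bound $\VC\text{-codensity}(\CC) \leq \dim \bigcup \CC$, and (5) $\Rightarrow$ (4) follows because a consistent family has the $(p,p)$-property for all $p$.

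The main obstacle is (1) $\Rightarrow$ (6). My plan is to apply a definable $(p,q)$-theorem for NIP theories (e.g., the Chernikov--Simon theorem, or its o-minimal incarnation) which produces a uniform bound $N = N(p, q, d)$ on transversal sizes for \emph{finite} sub-families of a definable family of $\VC$-codensity $\leq d$ with the $(p,q)$-property. The key step is to pass from finite to infinite sub-families: form, inside $X^N$, the definable family whose members are (closures of) the sets of $N$-tuples that serve as transversals of the various finite sub-families of $\CC$. This family is downward directed and consists of nonempty closed sets in the product topology on $X^N$, which is itself definable and to which filter-compactness in $X$ transfers. Applying (1) in $X^N$ produces a common $N$-tuple, and a short additional argument using closedness of each $C \in \CC$ shows the resulting $N$-tuple is in fact a transversal of all of $\CC$.

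For (7): filter-compactness implies curve-compactness by applying (1) to the downward directed family $\{\overline{\gamma((b-\epsilon, b))} : 0 < \epsilon < b - a\}$ of tail closures of a definable curve $\gamma : (a, b) \to X$, producing a cluster point that continuously completes $\gamma$. For the converse under Hausdorffness or definable choice, I would start with a directed definable family $\CC = \{C_s : s \in S\}$, use o-minimal cell decomposition on the index set $S$ together with definable (or Hausdorff-provided) choice to extract a definable curve $\gamma$ cofinal in $\CC$---one whose tails are eventually contained in every $C_s$---and apply (7) to obtain a limit point in $\bigcap \CC$. The subtlest step here is the cofinal curve construction, which I expect needs induction on $\dim S$; Hausdorffness (or definable choice) is exactly what makes the coherent selection across index cells possible, and without such a hypothesis one should expect counterexamples where curves converge to distinguishable ``phantom'' limits that do not actually witness a common intersection.
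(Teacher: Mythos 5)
Your easy implications are fine: (4)$\Rightarrow$(1) is the pigeonhole/directedness argument (Lemma~\ref{fact:downward_directed_family_2}), (6)$\Rightarrow$(5) follows from $\vc^*\leq\dim$ (Corollary~\ref{cor:vc_density2}), (5)$\Rightarrow$(4) is trivial, and (1)$\Rightarrow$(7) via tail closures is exactly Lemma~\ref{lemma:curve_compactness_implies_compactness}. The first genuine gap is (1)$\Rightarrow$(3). The family to which you apply filter-compactness does not exist: the collection of \emph{all} $\tau$-closed sets lying in $p$ is downward directed (it is closed under finite intersections) but it is not a uniformly definable family -- a definable topological space only provides a definable family of \emph{basic} closed sets (complements of basic opens), not of all closed sets -- while the definable family of basic closed sets in $p$ is in general \emph{not} downward directed. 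For instance, for the Euclidean topology on $M^2$ and the definable type $p$ of a point $(0,t)$ with $t\to 0^+$, the complements of the boxes $(0,1)\times(-1,1)$ and $(-1,0)\times(-1,1)$ belong to $p$, but any box containing their union also contains the points $(0,t)$ for small $t>0$, so no basic closed set in $p$ is contained in the intersection of those two complements. Supplying a definable downward directed refinement of $p|_\varphi$ is precisely the content of Proposition~\ref{prop:existence_complete_directed_families_2.1}, one of the paper's two main technical results (proved by induction on $|x|$ via uniform cell decomposition); it cannot be obtained from ``$p$ is a type''. (Your ``immediate'' (2)$\Leftrightarrow$(3) leans on the same conflation; the paper's Proposition~\ref{prop:pre_specialization_compactness} resolves it by working with basic closed sets.) Note also a structural defect: your graph has edges $1\to 6\to 5\to 4\to 1$ and $1\to 3\leftrightarrow 2$, but no edge from (2) or (3) back into the cycle, so even granting every step you would not conclude that (2) or (3) implies the others; the paper closes the loop by proving $(3)\Rightarrow(6)$ inside Proposition~\ref{prop:compactnes_transversals}.

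The two hard steps also fail as planned. For $(1)\Rightarrow(6)$, the finite-to-infinite passage in $X^N$ breaks at the definability/directedness interface: the family of sets of $N$-tuples serving as transversals of the various finite subfamilies is indexed by finite subsets of unbounded size, hence is not a definable family; the definable reindexing by single parameters $b$ (the $N$-tuples meeting $C_b$) is merely consistent, not downward directed, and consistency is useless here -- the entire point of (4)--(6) is that consistent definable families of closed sets in a definably compact space need not have nonempty intersection, only finite transversals. Moreover, filter-compactness of $X^N$ is asserted rather than proved and is not available at this stage. The paper's route is different: by Corollary~\ref{cor:pq} the family has the $(\omega,n)$-property for every $n$, so by the o-minimal non-forking characterization (Fact~\ref{thm:forking_definable_types}) and model-theoretic compactness it is covered by finitely many definable subfamilies each extending to a definable type (Corollary~\ref{prop:vc_tame_transversals}), and then (3) gives a point in each intersection. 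For $(7)\Rightarrow(1)$, the proposed extraction of a definable curve cofinal in $\CC$ by cell decomposition of the index set plus choice is unsupported: a definable downward directed family over a general o-minimal structure is not known to admit a definable one-parameter refining chain, and were such an extraction routinely available, the open issue of chain-compactness versus definable compactness discussed above Lemma~\ref{lemma:case_nested_family} would be trivial. The paper's proof of Proposition~\ref{prop:curve_compact_iff_directed_compact} constructs no cofinal curve: it first refines $\CC$ to a downward directed family of cells generating a definable type (Proposition~\ref{lemma:existence_complete_directed_families}, the second main type-theoretic result), then runs an induction on dimension producing the nested family of sets $D(C)=\bigcap_{C'} cl_\tau\bigl((f_C,g_{C'})\bigr)$, whose nonemptiness itself uses curve-compactness together with the choice Lemma~\ref{lemma:choice_compact_spaces}, and finally invokes the separate nested-family Lemma~\ref{lemma:case_nested_family}. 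Your forward direction for (7) and the instinct to induct on dimension are sound, but the cofinal-curve step is the crux of the whole theorem and, as proposed, it does not go through.
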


Theorem~\ref{thm:intro_compactness} and Remark~\ref{rem:Joh-question} provide a positive answer to~\cite[Question 4.14]{johnson14}, where Johnson asks whether curve-compactness and filter-compactness are equivalent for o-minimal definable manifold spaces. 

In light of Theorem~\ref{thm:intro_compactness} we may present the following definition. 

\begin{definition}\label{dfn:df-compactness}
    Let $(X,\tau)$ be a definable topological space in an o-minimal structure. We say that $(X,\tau)$ is \emph{definably compact} if it satisfies any (all) of the conditions \eqref{itm:compactness_1}-\eqref{itm:compactness_4} in Theorem~\ref{thm:intro_compactness}.
\end{definition}

We also prove that, if $\MM$ is an o-minimal expansion of the real line $(\mathbb{R},<)$, then every definable topological space in $\MM$ is definably compact if and only if it is compact in the classical sense (Corollary~\ref{cor:definably_compact_iff_compact}), which provides a positive answer (Remark~\ref{rem:pet-stein}) to part of \cite[Question 2.5]{pet_stein_99}. Furthermore, we show that definable compactness is definable uniformly in families (Proposition~\ref{prop:comp-families}). Additionally, throughout the paper we comment on other reasonable notions of definable compactness, including definitions in terms of externally definable sets (Remark~\ref{remark:peterzil_pillay}), chains (paragraph above Lemma~\ref{lemma:case_nested_family}) and nets (Remark~\ref{rem:nets}).

We prove Theorem~\ref{thm:intro_compactness} using o-minimal combinatorial and geometrical facts which are either known to hold in more general settings or can be conjectured to do so. These facts include known characterizations of o-minimal non-forking formulas, the Alon-Kleitman-Matou\v{s}ek $(p,q)$-theorem for VC classes, and two geometrical facts of independent interest about o-minimal types (Propositions~\ref{prop:existence_complete_directed_families_2.1} and~\ref{lemma:existence_complete_directed_families}), the first of which can be understood as a strong form of distal cell decomposition. Hence this paper can be seen as a road map to characterizing definable compactness in various NIP settings.

The structure of the paper is as follows. In Section~\ref{sec:prelim} we include preliminaries. In Section~\ref{sec:VC-forking} we gather the necessary literature results on Vapnik-Chervonenkis theory and on forking, extracting some easy corollaries. In Section~\ref{sec:types} we prove our two main results about o-minimal types. In Section~\ref{sec:topology} we introduce our topological framework and prove Theorem~\ref{thm:intro_compactness} through a series of propositions, as well our other results on definable compactness. 

This paper has been largely extracted from~\cite{FTT}, which includes independent proofs within o-minimality of Fact~\ref{thm:forking_definable_types} and of a version of Corollary~\ref{prop:vc_tame_transversals} where $m\geq n > \vc^*(\SSS)$ is substituted by $m\geq n > \dim \cup \SSS$, thus avoiding largely the use of forking or VC literature.

\subsection*{Acknowledgements}

The author thanks Sergei Starchenko for the idea of a characterization of definable compactness in terms of $\VC$ theory and finite transversals, which motivated much of the work in this paper. He also thanks Margaret Thomas, Pantelis Eleftheriou, Matthias Aschenbrenner, and the anonymous referee, for extensive feedback on various versions of the paper. In particular, he thanks Professor Aschenbrenner for suggesting the terminology ``$n$-consistent" and ``$n$-inconsistent". The author also thanks Antongiulio Fornasiero for sharing his unpublished notes on definable compactness~\cite{fornasiero}.  
Finally, the author is deeply indebted to Will Johnson for some very helpful conversations on the contents of Section~\ref{sec:types}, which resulted in the current proof to Proposition~\ref{lemma:existence_complete_directed_families}, and in Johnson's authoring of~\cite[Appendix A]{FTT}, a counterexample to a parameter version of Proposition~\ref{lemma:existence_complete_directed_families}.

During the research into the contents of this paper the author was supported by the Institute Henri Poincare (during the 2018 Program on Model Theory, Combinatorics and Valued Fields), the Canada Natural Sciences and Engineering Research
Council (NSERC) Discovery Grant RGPIN-06555-2018, the Graduate School and Department of Mathematics at Purdue University, the Fields Institute for Research in Mathematical Sciences (during the 2021 Thematic Program on Trends in Pure and Applied Model Theory), and the Engineering and Physical Sciences Research Council (EPSRC) Grant EP/V003291/1.  

\section{Preliminaries}\label{sec:prelim}

\subsection{Conventions}\label{sec:conv}
We fix a language $\LL=\{<,\ldots\}$ and a first-order $\LL$-structure $\MM=(M,<,\ldots)$ expanding a dense linear order without endpoints. For a set of parameters $A$ we denote by $\LL(A)$ the expansion of $\LL$ by symbols for elements in $A$. Throughout unless otherwise specified \textquotedblleft definable" means \textquotedblleft $\LL(M)$-definable in $\MM$".
All variables and parameters $x, a, u \ldots$ are $n$-tuples for some $n<\omega$. We denote the length of a variable or parameter $x$ by $|x|$. We denote ordered pairs of tuples by $\al x, y\ar$. We use $n$, $m$, $k$ and $l$ to denote natural numbers.

Unless stated otherwise, any formula we consider is in $\LL(M)$. For any formula $\varphi(x)$ and set $A\subseteq M^{|x|}$, let $\varphi(A)=\{ a\in A : \MM\models \varphi(a)\}$. For simplicity we write $\varphi(M)$ to mean $\varphi(M^{|x|})$. A (uniformly) definable family of sets is a family of the form $\{\varphi(M,b) : b\in \psi(M)\}$ for some formulas $\varphi(x,y)$ and $\psi(y)$, where we may always assume that $\varphi(x,y)\in \LL$ (i.e. a formula without parameters).   
For any two formulas $\varphi(x)$ and $\psi(x)$, we write $\varphi(x)\vdash \psi(x)$ to mean \mbox{$\MM\models \forall x (\varphi(x)\rightarrow \psi(x))$}. For sets of formulas $p(x)$ and $q(x)$ on free variables $x$, we write $p(x)\vdash q(x)$ to mean that, for every formula $\varphi(x)\in q(x)$, there is a finite subset $p'(x)\subseteq p(x)$ such that $\wedge p'(x) \vdash \varphi(x)$.
 
For a given $n$, let $\pi$ denote the projection $M^{n+1}\rightarrow M^n$ onto the first $n$ coordinates, where $n$ will often be omitted and clear from context. For a family $\SSS$ of subsets of $M^{n+1}$ let $\pi(\SSS)=\{\pi(S) :S\in\SSS\}$. 

Recall that $\MM$ is o-minimal if every definable subset of $M$ is a finite union of points and intervals with endpoints in $M\cup\{-\infty, +\infty\}$. For background in o-minimality we direct the reader to~\cite{dries98}. We will use, in particular, the existence of uniform cell decompositions~\cite[Chapter 3, Proposition 3.5]{dries98}. We use the following notation related to o-minimal cells: given two partial functions $f,g:M^n\rightarrow M\cup\{-\infty, +\infty\}$, with domains $dom(f)$ and $dom(g)$ respectively, let $(f,g)=\{\al x,t \ar : x\in dom(f)\cap dom(g),\, f(x)<t<g(x)\}$ (we relax thus the classical notation throughout by allowing that $f$ and $g$ have different domains). Whenever $\MM$ is o-minimal, we refer jointly to the order topology on $M$ and induced product topology on $M^n$ as the \emph{Euclidean topology}. Given a definable set $X\subseteq M^n$, we denote its closure in the Euclidean topology by $cl(X)$, and its frontier by $\partial(X)=cl(X)\setminus X$. We also denote the o-minimal dimension of $X$ by $\dim X$. 


\subsection{Intersecting families of sets and refinements} \label{sec:prelim-intersec}

We say that a family of sets $\SSS$ is \emph{$n$-consistent} if every subfamily of cardinality at most $n$ has nonempty intersection. A family is \emph{consistent} if it is $n$-consistent for every $n$. We say that $\SSS$ is \emph{$n$-inconsistent} if every subfamily of cardinality $n$ has empty intersection. 

A family of sets $\SSS$ has the \emph{$(p,q)$-property}, for cardinals $p\geq q>0$, if the sets in $\SSS$ are nonempty and, for every $p$ distinct sets in $\SSS$, there exists $q$ among them with nonempty intersection.
Note that $\SSS$ does not have the $(p,q)$-property if and only if it either contains the empty set or there exists a subfamily of $\SSS$ of size $p$ that is $q$-inconsistent. 

A family of sets $\SSS$ is \emph{downward directed} if, for every $F_0, F_1 \in \SSS$, there exists $F_2\in \SSS$ such that $F_2 \subseteq F_0 \cap F_1$. Equivalently if for every finite $\FF\subseteq \SSS$ there exists $F\in \SSS$ with $F\subseteq \cap\FF$. 

Given a family of sets $\SSS$ and a set $X$ let $X \cap \SSS= \SSS \cap X = \{ S\cap X : S\in\SSS\}$.
Observe that, if $\SSS$ is downward directed, then, for every set $X$, it holds that $X \cap \SSS$ is downward directed too. 

Given two families of sets $\SSS$ and $\FF$, we say that \emph{$\FF$ is a refinement of $\SSS$}, or that \emph{$\FF$ refines $\SSS$} if, for every $S\in \SSS$, there exists $F\in \FF$ with $F\subseteq S$. Observe that, if $\FF$ is a downward directed refinement of $\SSS$, then, for every finite subfamily $\GG \subseteq \SSS$, there exists some $F\in \FF$ with $F\subseteq \cap \GG$.

Given a family of sets $\SSS$ and a set $X$ we say that $X$ is a \emph{transversal of $\SSS$} if it intersects every set in $\SSS$ (i.e. $\emptyset\notin X\cap\SSS$). In this paper we are interested in the property that a definable family of sets has a finite transversal, as a weakening of the property of having nonempty intersection (i.e. having a transversal of size one). 

The following lemma will be used throughout the paper. We leave the easy proof to the reader.
\begin{lemma}\label{fact:downward_directed_family_2}
Let $\SSS$ be a downward directed family of sets and $\XX$ be a finite covering of a set $X$. If $S\cap X\neq \emptyset$ for every $S\in\SSS$, then there exists some $Y\in\XX$ such that $S\cap Y\neq \emptyset$ for every $S\in\SSS$. In particular, if $\SSS$ has a finite transversal, then $\cap\SSS \neq \emptyset$. 
\end{lemma}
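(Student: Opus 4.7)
The plan is to prove the first assertion by contraposition, using downward directedness to aggregate finitely many witnesses of failure, and then deduce the second assertion as an immediate instance of the first.

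For the first statement, I would suppose for contradiction that no $Y \in \XX$ meets every member of $\SSS$. Then for each $Y \in \XX$ I can pick some $S_Y \in \SSS$ with $S_Y \cap Y = \emptyset$. Since $\XX$ is \emph{finite}, the collection $\{S_Y : Y \in \XX\}$ is a finite subfamily of $\SSS$, and downward directedness (in its equivalent form quoted just before the lemma: every finite subfamily is refined by a single member) yields some $S \in \SSS$ with $S \subseteq \bigcap_{Y \in \XX} S_Y$. For this $S$ one has $S \cap Y = \emptyset$ for every $Y \in \XX$, hence
\[
S \cap X \;\subseteq\; S \cap \bigcup \XX \;=\; \bigcup_{Y\in\XX}(S\cap Y) \;=\; \emptyset,
\]
contradicting the hypothesis that $S \cap X \neq \emptyset$.

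For the \emph{in particular} clause, let $T = \{x_1, \ldots, x_n\}$ be a finite transversal of $\SSS$. Take $X = T$ and $\XX = \{\{x_1\}, \ldots, \{x_n\}\}$, which is a finite covering of $X$. By definition of transversal, $S \cap T \neq \emptyset$ for every $S \in \SSS$, so the first part of the lemma supplies some $x_i$ such that $\{x_i\}$ meets every $S \in \SSS$; equivalently $x_i \in \bigcap \SSS$, so $\bigcap \SSS \neq \emptyset$.

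There is essentially no obstacle here: the only subtlety is to notice that finiteness of $\XX$ is exactly what allows downward directedness to be invoked (it only produces common refinements of finite subfamilies), and that finiteness is also what reduces the second claim to the first via the singleton cover of a finite transversal.
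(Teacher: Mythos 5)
Your proof is correct; the paper leaves this lemma to the reader, and your argument (aggregating the finitely many failing witnesses via downward directedness, then applying the first part to the singleton cover of a finite transversal) is exactly the intended routine verification.
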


\subsection{Type preliminaries}

All the types that we consider are consistent and, unless otherwise specified, complete over $M$. We denote the set of these types by $S(M)$. We denote by $S_n(M)$ the set of $n$-types in $S(M)$. We resort often and without warning to the common model-theoretic convention of identifying types with the family of sets defined by formulas in it. For a definable set $X\subseteq M^n$, we denote by $S_X(M)$ the family of all types $p\in S_n(M)$ with $X\in p$ (namely types that concentrate on $X$). We will investigate partial types which are downward directed\footnote{In the literature this property among types is also denoted \emph{$1$-compressible}.}, and the refinement relation between partial types.  

Recall that a type $p(x) \in S(M)$ is definable if, for every formula $\varphi(x,y)\in \LL$, there is another formula $\psi(y)\in \LL(M)$ such that $\psi(M)=\{b \in M^{|y|} : \psi(x,b) \in p(x) \}$. It is definable over $A\subseteq M$ if these formulas $\psi(y)$ can be chosen in $\LL(A)$.
Note that, if a type is definable, then its projection $\pi(p)$ is definable too.

Given a formula $\varphi(x)$ let $\varphi^1(x)=\varphi(x)$ and $\varphi^0(x)=\neg\varphi(x)$. Given a type $p(x)$ and a formula $\varphi(x,y)$, recall that the restriction of $p(x)$ to $\varphi(x,y)$ is the subtype $p|_\varphi (x) = \{ \varphi^i(x,b) \in p(x) : i\in \{0,1\},\, b\in M^{|y|}\}$. We denote by $p|^1_\varphi (x)$ the restriction of $p(x)$ to ``positive" instances of $\varphi(x,y)$, i.e. $p|^1_\varphi (x) = \{ \varphi(x,b) \in p(x) : b\in M^{|y|}\}$. 

\section{O-minimal VC theory and forking}\label{sec:VC-forking}


\subsection{$\VC$ theory}
The following is an ad hoc presentation of the notion of VC-codensity and related results, with applications in Sections~\ref{section:forking_dividing} and~\ref{sec:topology}. For a more standard treatment of Vapnik-Chervonenkis (VC) theory in a model-theoretic context see~\cite{vc_density}. 

A pair $(X,\SSS)$, where $X$ is a set and $\SSS$ is a family of subsets of $X$, is called a \emph{set system}. For a subfamily $\FF\subseteq \SSS$, let $BA(\FF)$ denote the collection of \emph{Boolean atoms} of $\FF$, by which we mean the family of all maximal nonempty intersections of sets in $\FF \cup \{ X \setminus S : S\in \FF\}$. The \emph{dual shatter function} of $\SSS$ is the function $\pi^*_\SSS: \omega \rightarrow \omega$ given by
\[
\pi^*_\SSS(n) = \max_{\FF\subseteq \SSS, \, |\FF|=n} |BA(\FF)|.   
\]

The \emph{$\VC$-codensity of $\SSS$}, denoted by $\vc^*(\SSS)$, is the infimum over all real numbers $r\geq 0$ such that $\pi^*_{\SSS}(n)= O(n^r)$ (that is, $\pi^*_{\SSS}(n)/n^r$ is bounded at infinity). Observe that $\vc^*(\SSS)$ is independent of the ambient set $X$, and so throughout we omit it from our terminology.
A theory $T$ is \emph{NIP} (\emph{Not the Independence Property)} if every definable family of sets in every model of $T$ has finite $\VC$-codensity. Every o-minimal theory is NIP~\cite[Chapter 5]{dries98}.

For convenience we state the Alon-Kleitman-Matou\v{s}ek $(p,q)$-theorem in terms of $\VC$-codensity. For a finer statement see~\cite[Theorem 4]{matousek04}.

\begin{fact}[Alon-Kleitman-Matou\v{s}ek $(p,q)$-theorem~\cite{matousek04}]\label{thm:vc_tranversal}
Let $p \geq q >0$ be natural numbers and let $\SSS$ be a set system such that $\vc^*(\SSS)<q$. Then there is $n<\omega$ such that, for every finite subfamily $\FF\subseteq \SSS$, if $\FF$ has the $(p,q)$-property, then it has a transversal of size at most $n$. 
\end{fact}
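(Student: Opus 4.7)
The plan is to follow the classical Matou\v{s}ek strategy, which adapts the Alon--Kleitman argument from convex geometry to abstract set systems with bounded VC invariants. The three main ingredients are: (a) a polynomial bound on the dual shatter function coming from $\vc^*(\SSS) < q$; (b) a weak $\epsilon$-net theorem for families of bounded VC-codensity; and (c) a fractional Helly theorem adapted to the VC setting. These combine via an LP-duality / fractional transversal argument to produce the required uniform bound.

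First I would invoke the weak $\epsilon$-net theorem: for any set system with dual VC-dimension bounded by a function of $\vc^*(\SSS)$, there is a function $N:(0,1)\to \N$ such that for every finitely supported probability measure $\mu$ on the ground set, there exists a transversal of size at most $N(\epsilon)$ meeting every $S \in \SSS$ with $\mu(S) \geq \epsilon$. This is the standard tool that converts a ``fractional'' transversal into a finite one, and its size depends only on $\epsilon$ and $\vc^*(\SSS)$.

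Next I would establish the key combinatorial step, a fractional Helly theorem for VC classes: there exist constants $\alpha = \alpha(\vc^*(\SSS),q) > 0$ and $\beta = \beta(\vc^*(\SSS),q) > 0$ such that if $\FF \subseteq \SSS$ is a finite subfamily and at least an $\alpha$-fraction of the $q$-element subfamilies of $\FF$ have nonempty intersection, then some point is contained in at least a $\beta$-fraction of the sets in $\FF$. The $(p,q)$-property on a sufficiently large $\FF$, combined with a Ramsey-type counting argument, guarantees that a positive fraction of $q$-tuples of $\FF$ intersect, and hence the hypothesis of fractional Helly is met. This yields a ``heavy'' point in every large $\FF$, with weight bounded below uniformly.

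To conclude, I would apply an LP-duality argument in the spirit of Alon--Kleitman: the existence of a heavy point for every finite subfamily translates, via the min-max theorem for two-person games, into the existence of a fractional transversal of $\FF$ supported on a distribution under which every set in $\FF$ has mass at least $\beta$. Taking an $\epsilon$-net for this distribution with $\epsilon = \beta$ yields a transversal of size $N(\beta)$, which depends only on $p$, $q$, and $\vc^*(\SSS)$ --- not on $|\FF|$. The main obstacle is the fractional Helly step: without any geometric or order-theoretic structure one must rely purely on polynomial growth of $\pi^*_\SSS$, and the combinatorial bookkeeping (e.g. encoding the relevant $q$-tuples within Boolean atoms of a carefully chosen subfamily) is delicate. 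Small finite $\FF$ below the Ramsey threshold can be handled trivially, since their union is itself a transversal of size at most $p-1$.
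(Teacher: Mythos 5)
The paper gives no proof of this statement---it is quoted as a Fact from the cited reference---and your sketch follows exactly the Alon--Kleitman--Matou\v{s}ek route used there: the polynomially bounded dual shatter function yields a fractional Helly theorem with Helly number at most $q$, LP duality converts the heavy-point conclusion (obtained from the $(p,q)$-property via counting) into a bounded fractional transversal, and the Haussler--Welzl $\varepsilon$-net theorem (applicable because finite VC-codensity implies finite VC dimension) rounds this to a transversal of size bounded independently of $|\FF|$. Two minor corrections: the fractional Helly step must be quantified as ``for every $\alpha>0$ there is $\beta>0$'' so that it accepts the $p$-dependent fraction of intersecting $q$-tuples coming from the counting argument, and the trivial case of small subfamilies gives a transversal of size at most the size threshold (one point per set), not $p-1$.
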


The following easy corollary will be used in the proof of Corollary~\ref{prop:vc_tame_transversals}.

\begin{corollary}\label{cor:pq}
Let $p \geq q >0$ be natural numbers and let $\SSS$ be a set system such that $\vc^*(\SSS)<q$. If $\SSS$ has the $(p,q)$-property, then, for every $0< q' < \omega$, there exists some natural number $p'=p'(q') \geq q'$ such that $\SSS$ has the $(p',q')$-property. In particular, $\SSS$ has the $(\omega,q')$-property for every $0< q' < \omega$.
\end{corollary}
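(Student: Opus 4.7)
The plan is a direct pigeonhole argument on top of Fact~\ref{thm:vc_tranversal}. First, since $\SSS$ has the $(p,q)$-property and $\vc^*(\SSS) < q$, Fact~\ref{thm:vc_tranversal} produces a uniform bound $n < \omega$ such that every finite subfamily of $\SSS$ with the $(p,q)$-property admits a transversal of cardinality at most $n$. Observe that any subfamily $\FF \subseteq \SSS$ inherits the $(p,q)$-property from $\SSS$ (vacuously if $|\FF| < p$, and otherwise because the $p$-tuples of sets in $\FF$ are $p$-tuples of sets in $\SSS$). Hence \emph{every} finite subfamily of $\SSS$ has a transversal of size at most $n$.

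Now fix $q'$ with $0 < q' < \omega$ and set $p' := \max(p, n q')$, which is clearly $\geq q'$. I would verify the $(p', q')$-property as follows: let $\FF \subseteq \SSS$ be any subfamily of size $p'$. By the previous step, $\FF$ has a transversal $T$ with $|T| \leq n$. Since $T$ meets every set of $\FF$, the pigeonhole principle yields a point $t \in T$ lying in at least $\lceil p'/n \rceil \geq q'$ of the sets of $\FF$. Those $q'$ sets all contain $t$, so they have nonempty common intersection. This is exactly the $(p', q')$-property.

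For the final ``In particular'' clause, the $(\omega, q')$-property is immediate: any infinite subfamily of $\SSS$ contains a subfamily of size $p'(q')$, to which the $(p', q')$-property just established applies.

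There is no real obstacle here; the only minor point worth flagging is the bookkeeping to ensure $p'(q') \geq q'$ and that the chosen $p'$ is large enough both to exceed $p$ (so that the $(p,q)$-property is nonvacuous on an arbitrary $p'$-subfamily, although this is not strictly needed for the argument) and to force $\lceil p'/n \rceil \geq q'$ via pigeonhole. Taking $p'(q') = \max(p, nq')$ handles both conditions simultaneously.
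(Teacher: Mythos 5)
Your proof is correct and follows essentially the same route as the paper: invoke Fact~\ref{thm:vc_tranversal} to obtain a uniform transversal bound $n$ for finite subfamilies, then pigeonhole a transversal of a $p'$-sized subfamily to find a point lying in at least $q'$ of its sets (the paper takes $p'=n(q'-1)+1$ where you take $\max(p,nq')$, an immaterial difference). Your explicit remark that subfamilies inherit the $(p,q)$-property is a useful bit of bookkeeping the paper leaves implicit.
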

\begin{proof}
Let $\SSS$ be as in the corollary, satisfying the $(p,q)$-property. Let $n$ be as described by Fact~\ref{thm:vc_tranversal}. For any given $q'>0$, let $p'=n(q'-1)+1$. Consider an arbitrary subfamily $\FF$ of $\SSS$ of size $p'$. By Fact~\ref{thm:vc_tranversal}, $\FF$ has a transversal $A$ of size at most $n$. By definition of $p'$, there must exist some $a\in A$ such that $|F\in \FF : a\in F|\geq q'$. It follows that $\SSS$ has the $(p',q')$-property.     
\end{proof}

The following fact is a reformulation of the main result for weakly o-minimal structures (a class which contains o-minimal structures) in~\cite{vc_density} by Aschenbrenner, Dolich, Haskell, Macpherson and Starchenko. It was previously proved for o-minimal structures by Wilkie (unpublished) and Johnson-Laskowski~\cite{john_las_10}, and for o-minimal expansions of the field of reals by Karpinski-Macintyre~\cite{kar_mac_00}.

\begin{fact}[\cite{vc_density}, Theorem 6.1]\label{cor:vc_density}
Let $\MM$ be an o-minimal structure and let $\SSS$ be a definable family of subsets of $M^n$. Then \mbox{$\vc^*(\SSS)\leq n$}.
\end{fact}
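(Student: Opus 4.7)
My plan is to prove the bound $\vc^*(\SSS) \leq n$ by induction on the ambient dimension $n$, using uniform cell decomposition in $\MM$ to control the dual shatter function $\pi^*_\SSS$.

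For the base case $n = 1$, uniform finiteness in o-minimal structures provides an integer $N = N(\varphi)$ such that every fiber $\varphi(M,b)$ is a union of at most $N$ points and open intervals. Given any $k$ parameters $b_1,\ldots,b_k$, the endpoints of the sets $\varphi(M,b_i)$ number at most $2Nk$, and between consecutive endpoints the Boolean inclusion pattern is constant. Hence $\pi^*_\SSS(k) \leq 2Nk + 1 = O(k)$, so $\vc^*(\SSS) \leq 1$.

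For the inductive step, write $\SSS = \{\varphi(M^{n+1},b) : b \in B\}$ and split the variables as $x = (x',x_{n+1})$. I would apply uniform cell decomposition to obtain, for each tuple $\bar b = (b_1,\ldots,b_k)$, a cell decomposition $\CC_{\bar b}$ of $M^{n+1}$ compatible with all the fibers $\varphi(M^{n+1},b_i)$ and whose cells are uniformly definable in $\bar b$. Since every Boolean atom of these $k$ fibers is a union of such cells, $\pi^*_\SSS(k) \leq |\CC_{\bar b}|$. Projecting $\CC_{\bar b}$ to $M^n$ produces a cell decomposition $\CC'_{\bar b}$ compatible with the projected family $\pi(\SSS)$; by the inductive hypothesis applied to the definable family $\pi(\SSS)$, $|\CC'_{\bar b}| = O(k^n)$. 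Over each base cell $D \in \CC'_{\bar b}$ and each $x' \in D$, the vertical fiber of each $\varphi(x,b_i)$ contains at most $N$ boundary points, so the strip above $D$ contributes $O(k)$ cells to $\CC_{\bar b}$. Multiplying gives $|\CC_{\bar b}| = O(k^{n+1})$, whence $\vc^*(\SSS) \leq n + 1$.

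The main obstacle is handling the discrepancy between the number of cells in the uniform decomposition $\CC'_{\bar b}$ and the number of Boolean atoms of the projected family $\pi(\SSS)$: the induction hypothesis directly controls only the latter. To bridge the gap one must know that an o-minimal cell decomposition compatible with a given family of sets can be chosen whose cell count is of the same polynomial order as the number of Boolean atoms, which is a careful refinement of the cell decomposition theorem. A further subtlety lies in the passage from finite to real-valued VC-codensity: the multiplicative constants produced at each step of the induction must be absorbed into the $O$-notation uniformly in $\bar b$, so that the bound $\pi^*_\SSS(k) = O(k^n)$ is genuinely polynomial of degree $n$ rather than $n$ plus a lower-order correction depending on $k$. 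For o-minimal expansions of ordered fields one could shortcut this bookkeeping via Khovanskii-style Bezout bounds (\emph{cf.} Karpinski--Macintyre), but the purely o-minimal route via cell decomposition is what I would pursue here.
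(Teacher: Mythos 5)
The paper does not prove this statement at all: it is imported as a black box, quoted from Aschenbrenner--Dolich--Haskell--Macpherson--Starchenko (\cite{vc_density}, Theorem 6.1), with earlier proofs credited to Wilkie (unpublished), Johnson--Laskowski and Karpinski--Macintyre; only the easy Corollary~\ref{cor:vc_density2} is derived from it via Lemma~\ref{lem:vc-density-CD}. So your argument has to stand as a self-contained proof, and it does not: the inductive step has a genuine gap exactly at the point you flag and then wave away. The induction hypothesis bounds the number of Boolean atoms of $k$ sets from a definable family in $M^n$ by $O(k^n)$; it does not bound the number of cells of a decomposition of $M^n$ compatible with the cylindrical data of the $k$ fibers, which is what your estimate $|\CC'_{\bar b}|=O(k^n)$ needs. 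Compatibility with $\pi(\SSS)$ alone does not make the vertical count work: inside a single Boolean atom of the projected family, boundary functions coming from different parameters $b_i$ and $b_j$ can cross, so the Boolean pattern along the vertical fibre is not constant there. To justify ``each strip contributes $O(k)$ cells'' you must refine the base to be compatible with the sets $\{x' : f(x')<g(x')\}$ for all pairs of boundary functions of distinct fibers; this is a definable family with $O(k^2)$ members, and the induction hypothesis then yields only $O(k^{2n})$ base pieces, giving an overall exponent of roughly $2n+1$ instead of $n+1$.

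The bridging fact you appeal to --- that a compatible cell decomposition can always be chosen with cell count of the same polynomial order as the number of Boolean atoms --- is not a ``careful refinement'' of the cell decomposition theorem; securing the exponent $n$ is essentially the entire content of the theorem being proved, and all known proofs replace the naive cell-counting induction by a finer device (Johnson--Laskowski's argument via directed families, Karpinski--Macintyre's Khovanskii-style complexity bounds over the reals, and the uniform type-counting machinery of \cite{vc_density}). Your base case $n=1$ is correct, and the reduction of $\pi^*_\SSS(k)$ to a cell count is harmless, but as written the induction does not close, and the shortcut you mention for expansions of ordered fields is precisely the Karpinski--Macintyre route rather than a patch for the general o-minimal argument.
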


We will apply Fact~\ref{cor:vc_density} in subsequent sections through the slight improvement given by the next corollary.

\begin{corollary}\label{cor:vc_density2}
Let $\MM$ be an o-minimal structure and let $\SSS$ be a definable family of sets with $n=\dim \cup\SSS$. Then \mbox{$\vc^*(\SSS)\leq n$}.
\end{corollary}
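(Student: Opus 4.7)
The plan is to reduce to Fact~\ref{cor:vc_density} via o-minimal cell decomposition of $\cup\SSS$.

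Suppose $\SSS$ is a definable family of subsets of $M^k$ with $\dim \cup\SSS = n$, so that $n \le k$. First, I would apply uniform cell decomposition to partition $\cup\SSS$ into finitely many cells $C_1, \ldots, C_l$, each of dimension $d_i \le n$. For each cell $C_i$ there is a coordinate projection $\rho_i : M^k \to M^{d_i}$ that restricts to a definable bijection $C_i \to \rho_i(C_i)$. I would then consider the pushforward families
\[
\SSS_i = \{\rho_i(S \cap C_i) : S \in \SSS\},
\]
each a definable family of subsets of $M^{d_i}$. Applying Fact~\ref{cor:vc_density} to each $\SSS_i$ gives $\vc^*(\SSS_i) \le d_i \le n$, and hence $\pi^*_{\SSS_i}(m) = O(m^n)$.

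The central combinatorial step is to establish the inequality
\[
\pi^*_\SSS(m) \le \sum_{i=1}^l \pi^*_{\SSS_i}(m).
\]
I would take the ambient set to be $X = \cup\SSS$ itself, which is legitimate since $\vc^*$ is independent of the ambient set, and moreover removes the need to keep track of a stray ``outside'' atom. For a finite subfamily $\FF\subseteq \SSS$ of size $m$, every Boolean atom corresponds to a sign pattern $\epsilon : \FF \to \{+,-\}$ with $A_\epsilon := \bigcap_{S\in\FF} S^{\epsilon(S)} \ne \emptyset$. Since $\{C_i\}_{i=1}^{l}$ partitions $X$, we have $A_\epsilon \ne \emptyset$ iff $A_\epsilon \cap C_i \ne \emptyset$ for some $i$. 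For each $i$, the nonempty intersections $A_\epsilon \cap C_i$ are precisely the Boolean atoms of $\{S \cap C_i : S \in \FF\}$ in the ambient set $C_i$, which correspond under the bijection $\rho_i$ to the Boolean atoms of $\FF_i := \{\rho_i(S \cap C_i) : S \in \FF\}$. Counting realized sign patterns as a union over $i$ yields the displayed bound. Combined with the previous paragraph this gives $\pi^*_\SSS(m) = O(m^n)$ and thus $\vc^*(\SSS) \le n$.

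Rather than a genuine obstacle, the only care needed is bookkeeping: verifying that the partition of $\cup\SSS$ into cells induces a corresponding decomposition of the set of realized sign patterns, and that pushing forward via each $\rho_i$ preserves Boolean atoms (which is immediate from $\rho_i$ being a definable bijection on $C_i$).
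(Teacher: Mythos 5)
Your proof is correct and follows essentially the same route as the paper: cell-decompose $\cup\SSS$, transfer each piece to $M^{d_i}$ (with $d_i\le n$) by a definable bijection so that Fact~\ref{cor:vc_density} applies, and use subadditivity of the dual shatter function over the finitely many pieces, which is exactly the content of the paper's Lemma~\ref{lem:vc-density-CD}. The only cosmetic slip is asserting $\pi^*_{\SSS_i}(m)=O(m^n)$ from $\vc^*(\SSS_i)\le n$ (the definition of $\vc^*$ as an infimum only gives $O(m^r)$ for each $r>n$), but summing those bounds for an arbitrary $r>n$ still yields $\vc^*(\SSS)\le n$, so the argument goes through unchanged.
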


The proof of Corollary~\ref{cor:vc_density2} follows immediately from the following lemma and o-minimal cell decomposition, the latter implying that, if $X$ is a definable set in an o-minimal structure $\MM$ with $\dim X\leq n$, then $X$ admits a finite partition into definable subsets, each of which is in definable bijection with a subset of $M^n$. 

\begin{lemma}\label{lem:vc-density-CD}
Let $\SSS$ be a set system and let $X_1, \ldots, X_m$ be sets such that $\cup\SSS \subseteq \cup_{i\leq m} X_i$. Then 
\[
\vc^*(\SSS)=\max_{1 \leq i \leq m} \vc^*(X_i \cap \SSS).
\]
\end{lemma}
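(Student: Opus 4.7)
The plan is to prove the two inequalities $\vc^*(X_i \cap \SSS) \leq \vc^*(\SSS)$ for each $i$ and $\vc^*(\SSS) \leq \max_i \vc^*(X_i \cap \SSS)$ by a direct analysis of the dual shatter function. I use the fact noted in the text that $\vc^*$ is independent of the ambient set, working in an ambient $X \supseteq \bigcup \SSS$ for $\SSS$ and in ambient $X_i$ for $X_i \cap \SSS$.

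The starting point for both inequalities is the elementary identity: for any finite $\FF = \{S_1, \ldots, S_n\} \subseteq \SSS$ and any $J \subseteq \{1, \ldots, n\}$, the $J$-indexed Boolean atom of $\FF_i := \{S \cap X_i : S \in \FF\}$ in ambient $X_i$ equals $X_i$ intersected with the $J$-indexed Boolean atom of $\FF$ in ambient $X$. The first inequality follows immediately: distinct nonempty atoms of $\FF_i$ come from distinct (nonempty) atoms of $\FF$, so $\pi^*_{X_i \cap \SSS}(n) \leq \pi^*_\SSS(n)$ and hence $\vc^*(X_i \cap \SSS) \leq \vc^*(\SSS)$.

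For the reverse inequality, the key observation is that every Boolean atom of $\FF$ except possibly the ``outside atom'' $X \setminus \bigcup \FF$ (the one indexed by $J = \emptyset$) is contained in $\bigcup \FF \subseteq \bigcup \SSS \subseteq \bigcup_{i \leq m} X_i$. So every such ``interior'' atom $A$ meets some $X_i$ nontrivially, and $A \cap X_i$ is then a nonempty atom of $\FF_i$ by the identity above. Choosing such an $i$ for each interior atom produces, for each fixed $i$, an injection from those interior atoms assigned to $i$ into $BA(\FF_i)$ (different atoms of $\FF$ are disjoint, hence their nonempty traces on $X_i$ are distinct); summing over $i$ yields $\pi^*_\SSS(n) \leq 1 + m \cdot \max_i \pi^*_{X_i \cap \SSS}(n)$. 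The additive $1$ (the outside atom, which is the only conceptual subtlety of the proof since it is not tracked by any restriction $\FF_i$) and the multiplicative $m$ are absorbed in the $O(n^r)$ asymptotic defining $\vc^*$, giving $\vc^*(\SSS) \leq \max_i \vc^*(X_i \cap \SSS)$ and completing the argument.
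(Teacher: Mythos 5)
Your proof is correct and follows essentially the same route as the paper's: both inequalities are obtained by tracing Boolean atoms of a finite $\FF\subseteq\SSS$ through the sets $X_i$ and comparing dual shatter functions, with the extra constants (your $+1$ and factor $m$, the paper's sum over $i$ and its own $+1$) absorbed into the $O(n^r)$ asymptotics defining $\vc^*$. Your explicit treatment of the outside atom is if anything slightly more careful than the paper's stated bound $|BA(\FF)|\le\sum_{i\le m}|BA(X_i\cap\FF)|$, but the argument is the same in substance.
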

\begin{proof}
First note that, for every $i\leq m$ and finite subfamily $\FF\subseteq \SSS$, $BA(X_i \cap \FF) \leq BA(\FF)+1$, meaning that $\pi_{X_i \cap \SSS}^*(n) \leq \pi_{\SSS}^*(n)+1$ for every $n$, and consequently $\vc^*(X_i \cap \SSS) \leq \vc^*(\SSS)$.

For the opposite inequality, let $\FF$ be a finite subfamily of $\SSS$. Observe that 
\[
BA(\FF) \leq BA(X_1 \cap \FF) + \cdots + BA(X_m \cap \FF).
\]
Consequently
\[
\pi_{\SSS}^*(n) \leq  \pi_{X_1 \cap \SSS}^*(n) + \cdots + \pi_{X_m \cap \SSS}^*(n)
\]
for every $n$.
It follows that, for any real number $r\geq 0$, if $\pi_{X_i \cap \SSS}^*(n)=O(n^r)$ for all $i \leq m$, then $\pi_{\SSS}^*(n)=O(n^r)$. Hence there must exist some $i \leq m$ such that $\vc^*(\SSS)\leq \vc^*(X_i \cap \SSS)$.
\end{proof}

Since throughout this paper $p$ and $q$ are employed as terminology for types, in subsequent sections we address the $(p,q)$-property in terms of $m$ and $n$, e.g. the $(m,n)$-property.

\subsection{Forking, dividing and definable types}\label{section:forking_dividing}

In this section we recall some facts about non-forking formulas in o-minimal theories, and derive some consequences which we will need in Section~\ref{sec:topology}.
This is the subject of ongoing research among NIP theories~\cite{simon15}. Throughout we fix a $|M|^+$-saturated elementary extension $\UU=(U,\ldots)$ of $\MM$.

Recall that a formula $\varphi(x,b) \in \LL(U)$ is said to \emph{$n$-divide over $A\subseteq U$}, for some $n\geq 1$, if there exists a sequence of elements $(b_i)_{i<\omega}$ in $U^{|b|}$, with $\tp(b_i/A)=\tp(b/A)$ for every $i$, such that $\{ \varphi(x,b_i) : i<\omega\}$ is $n$-inconsistent. Equivalently, $\varphi(x,b)$ is said to $n$-divide over $A$ if the family $\{\varphi(U, b') : \tp(b'/A)=\tp(b/A)\}$ does not have the $(\omega, n)$-property. A formula $\varphi(x,b)$ \emph{divides} if it $n$-divides for some $n$. Conversely, a formula $\varphi(x,b)$ does not divide over $A$ if and only if the family $\{ \varphi(U,b') : \tp(b'/A)=\tp(b/A)\}$ has the $(\omega, n)$-property for every $n$. Hence, not dividing is an intersection property. 

A formula \emph{forks over $A$} if it implies a finite disjunction of formulas that divide each over $A$. In $NTP_2$ theories (a class which includes NIP and simple theories) forking and dividing over a model are equivalent notions~\cite[Theorem $1.1$]{cher_kap_12}.

The next equivalence was proved first for o-minimal expansions of ordered fields\footnote{Dolich specifically works with ``nice" o-minimal theories, a certain class of theories which includes o-minimal expansions of ordered fields.} by Dolich~\cite{dolich04} (where he considers forking over small sets and not just models) and for unpackable $\VC$-minimal theories, a class which includes o-minimal theories, by Cotter and Starchenko~\cite{cotter_star_12}. The best generalization up to date is due to Simon and Starchenko~\cite{simon_star_14}, and applies to a large class of dp-minimal theories (for details and precise definitions of unpackable $\VC$-minimal and dp-minimal theory see~\cite{cotter_star_12} and~\cite{simon_star_14} respectively). We state the result for o-minimal theories. 

\begin{fact}\label{thm:forking_definable_types} Let $T$ be an o-minimal $\LL$-theory with monster model $\Mon$. Let $\MM\models T$ and $\varphi(x,b)\in\LL(U)$. The following are equivalent. 
\begin{enumerate}[(i)]
\item\label{itm:thm_forking_definable_types_1} $\varphi(x,b)$ does not fork (equivalently, by~\cite{cher_kap_12}, does not divide) over $M$. 
\item\label{itm:thm_forking_definable_types_2} $\varphi(x,b)$ extends to an $M$-definable type in $S_{|x|}(U)$.
\end{enumerate}
\end{fact}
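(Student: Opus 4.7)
The easy direction (ii) $\Rightarrow$ (i) is standard NIP-style reasoning. If $p\in S_{|x|}(U)$ is $M$-definable with $\varphi(x,b)\in p$, then $p$ is $M$-invariant, so for any $M$-indiscernible sequence $(b_i)_{i<\omega}$ with $b_0=b$ every $b_i$ is $M$-conjugate to $b$ and hence $\varphi(x,b_i)\in p$. Thus $\{\varphi(x,b_i):i<\omega\}$ is realised inside $p$, in particular consistent, so $\varphi(x,b)$ does not divide over $M$; since o-minimal theories are $NTP_2$, non-dividing and non-forking over $M$ coincide by \cite{cher_kap_12}.

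For (i) $\Rightarrow$ (ii) my plan is a two-step upgrade. First invoke the standard NIP principle that any formula non-forking over a model extends to a global $M$-invariant type: pick a coheir $p\in S_{|x|}(U)$ of some completion of $\varphi(x,b)$ over $Mb$ that is finitely satisfiable in $M$. Then promote $M$-invariance to $M$-definability using the classical o-minimal theorem (proved inductively on $|x|$ via cell decomposition and uniform definability of fibers) that every type in $S_{|x|}(M)$ is $M$-definable. This yields an $M$-definable defining scheme $\{d_\phi(y):\phi(x,y)\in\LL\}$ for $p|M$, and a natural $M$-definable global candidate $p'$ given by $\phi(x,c)\in p'$ iff $\models d_\phi(c)$.

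The main obstacle is that $p'$ need not contain $\varphi(x,b)$: $M$-invariance is strictly weaker than $M$-definability even in o-minimal theories (for instance, in pure DLO the type ``$x$ sits immediately above some element above $\mathbb{Q}$'' is $M$-invariant but not $M$-definable), so the invariant extension $p$ can disagree with $p'$ on formulas with parameters in $U\setminus M$. To close the gap I would induct on $|x|$ with uniform cell decomposition. In the base case $|x|=1$, the set $\varphi(U,b)$ is a finite union of points and open intervals with endpoints in $\dcl(Mb)$; if no $M$-definable $1$-type (a point of $M\cup\{\pm\infty\}$, or a cut at an $M$-definable location) sits inside $\varphi(U,b)$, then each interval of $\varphi(U,b)$ is trapped between non-$M$-definable cuts, and saturation combined with the freedom of $M$-automorphisms to move these cuts would produce an $M$-indiscernible sequence of conjugates $b_i$ of $b$ with the $\varphi(U,b_i)$ pairwise disjoint, contradicting non-dividing. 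For $|x|>1$, one applies uniform cell decomposition to $\varphi$ jointly in $\al x,y\ar$, fibers over the last $x$-coordinate, invokes the inductive hypothesis fiberwise, and glues the resulting one-variable $M$-definable types into an $M$-definable global $|x|$-type through the common defining scheme. I expect the base case to be the technical heart of the argument, as it is precisely the step that encodes the genuinely one-dimensional o-minimal tameness of cuts over $M$ and that fails in arbitrary NIP theories.
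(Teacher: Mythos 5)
Your direction (ii)$\Rightarrow$(i) is fine, and note that the paper itself does not prove this statement: it is quoted as a Fact from the literature (Dolich~\cite{dolich04}, Cotter--Starchenko~\cite{cotter_star_12}, Simon--Starchenko~\cite{simon_star_14}, with an independent o-minimal proof in~\cite{FTT}), so the only question is whether your sketch of (i)$\Rightarrow$(ii) would actually work. It would not, for a concrete reason: your second step rests on the claim that ``every type in $S_{|x|}(M)$ is $M$-definable'' in an o-minimal theory. That is false for a general model $\MM$; it is the Marker--Steinhorn theorem~\cite{mark_stein_94}, which requires $M$ to be Dedekind complete (the paper invokes it only for $M=\mathbb{R}$). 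Over $(\mathbb{Q},<)$, or over the real algebraic numbers, the $1$-type at any irrational cut is not definable, since its $\varphi$-definition for $\varphi(x,y)=$``$x<y$'' would have to define the cut. So there is no defining scheme for $p|M$ to extract, and the ``promote invariance to definability'' step collapses at the outset --- your own DLO example of an $M$-invariant, non-$M$-definable type is exactly a witness that invariance is all that the coheir construction gives you, and it is strictly too weak.

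The fallback induction you propose then has to carry the entire proof, and as sketched it does not. The base case $|x|=1$ is essentially right in spirit (though with $N$ intervals trapped in cuts you should aim for $(N{+}1)$-inconsistency of conjugates rather than pairwise disjointness of the whole unions, which automorphisms alone need not give you). The real gap is the inductive step. Projecting is easy (if $\exists x_n\,\varphi$ divides over $M$ then so does $\varphi$), so you get an $M$-definable $(|x|-1)$-type $q$ containing the projection; but you then need an $M$-definable $1$-type in the fibre \emph{compatible with} $q$, i.e.\ you must show that non-dividing of $\varphi(x,b)$ over $M$ yields non-dividing (over $M$, uniformly in the fibre) of the fibre formula above a realization of $q$, and that the resulting family of cuts is itself cut out $M$-definably so the two pieces assemble into one global $M$-definable type. ``Gluing through the common defining scheme'' presupposes the scheme you are trying to construct. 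This fibre-compatibility step is precisely where Dolich's and Cotter--Starchenko's arguments do their substantial work (and where~\cite{FTT} spends its effort in the o-minimal case); without an argument for it, your proposal establishes only the one-variable case.
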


In Section~\ref{sec:compactness} we will apply Fact~\ref{thm:forking_definable_types} in the form of the following corollary.


\begin{corollary}~\label{prop:vc_tame_transversals}
Let $\MM$ be an o-minimal structure and $\SSS$ be a definable family of nonempty subsets of $M^k$. If there exist natural numbers $m\geq n > \vc^*(\SSS)$ such that $\SSS$ has the $(m,n)$-property, then there exists a finite covering $\{\SSS_1, \ldots, \SSS_l\}$ of $\SSS$ by definable subfamilies such that, for every $i\leq l$, the family $\SSS_i$ extends to a definable type in $S_k(M)$.        
\end{corollary}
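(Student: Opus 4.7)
The plan is to apply the Alon-Kleitman-Matou\v{s}ek $(p,q)$-theorem (Fact~\ref{thm:vc_tranversal}) inside the Stone space $S_k(M)$ rather than in $M^k$, thereby obtaining a finite transversal by \emph{types} instead of points. For each $b\in\psi(M)$ the clopen $[\varphi(x,b)]_M := \{q\in S_k(M) : \varphi(x,b)\in q\}$ is a nonempty subset of $S_k(M)$, and the family $\FF := \{[\varphi(x,b)]_M : b\in\psi(M)\}$ inherits the VC-codensity of $\SSS$ (Stone duality preserves the shatter function) and its $(m,n)$-property (a common realization in $M^k$ of $n$ formulas produces the realized type $\tp(a/M)$ in the corresponding $n$ clopens).

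Fact~\ref{thm:vc_tranversal} with $p=m,\, q=n$ then gives a uniform $N<\omega$ such that every finite subfamily of $\FF$ admits a transversal of size $\leq N$ in $S_k(M)$ (passing from an $M^k$-transversal to realized types). Since $S_k(M)^N$ is compact, the closed subset
\[
T := \bigcap_{b\in\psi(M)} \bigcup_{i\leq N} \mathrm{pr}_i^{-1}\bigl([\varphi(x,b)]_M\bigr) \subseteq S_k(M)^N
\]
has the finite intersection property, hence is nonempty. Picking $(q_1,\ldots,q_N)\in T$ and setting $\SSS_i := \{\varphi(M,b) : b\in\psi(M),\, \varphi(x,b)\in q_i\}$ yields subfamilies covering $\SSS$ and each extending to $q_i$.

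For these subfamilies to be $\LL(M)$-definable it suffices that each $q_i$ be $M$-definable, so that the $\varphi$-definition of $q_i$ is an $\LL(M)$-formula. Here Fact~\ref{thm:forking_definable_types} enters: by Corollary~\ref{cor:pq} applied to the $M$-orbit $\{\varphi(U,b'):\tp(b'/M)=\tp(b/M)\}\subseteq\SSS(\UU)$, every formula in $\SSS$ is non-dividing over $M$, hence extends to an $M$-definable global type; combined with the NIP structure of o-minimal theories, this should allow one to refine the compactness argument so that the transversal types $q_i$ are themselves $M$-definable. The main obstacle is exactly this last step: neither Fact~\ref{thm:vc_tranversal} nor compactness of $S_k(M)$ alone guarantees $M$-definability of the $q_i$, and Fact~\ref{thm:forking_definable_types} directly addresses only individual formulas. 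The o-minimal/NIP structure is essential to upgrade from definability of individual non-forking formulas to definability of the full transversal types.
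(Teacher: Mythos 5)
There is a genuine gap, and you have located it yourself: the tuple of types $(q_1,\ldots,q_N)$ produced by compactness of $S_k(M)^N$ is just a tuple of arbitrary complete types over $M$, and in a general o-minimal structure (unlike over $\mathbb{R}$, where Marker--Steinhorn applies) such types need not be definable. Consequently your subfamilies $\SSS_i=\{\varphi(M,b):\varphi(x,b)\in q_i\}$ are neither definable nor known to extend to definable types, which is the entire content of the corollary beyond the $(p,q)$-theorem. The repair you sketch does not work as stated: for a parameter $b\in\psi(M)$ the orbit $\{b':\tp(b'/M)=\tp(b/M)\}$ is just $\{b\}$, so non-dividing over $M$ is automatic for any consistent $\LL(M)$-formula, and Fact~\ref{thm:forking_definable_types} then only re-proves that each single set $\varphi(M,b)$ extends to an $M$-definable type. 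That neither bounds the number of definable types needed nor makes the covering subfamilies definable, and no amount of compactness in $S_k(M)$ alone upgrades the $q_i$ to definable types.

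The paper closes exactly this gap by running the argument through a $|M|^+$-saturated extension $\UU$ rather than through the Stone space. Suppose no finite covering by definable subfamilies extending to definable types exists; since each $M$-definable global type $p$ yields, via its $\varphi$-definition, a definable set of parameters $b$ with $\varphi(x,b)\in p$, model-theoretic compactness produces a single nonstandard $\mathbf{b}\in\psi(U)$ such that $\varphi(x,\mathbf{b})$ extends to no $M$-definable type in $S_{|x|}(U)$. This is where the $(m,n)$-property with $n>\vc^*(\SSS)$ is actually used: by Corollary~\ref{cor:pq} (which is where Fact~\ref{thm:vc_tranversal} enters), $\SSS$ has the $(\omega,n')$-property for all $n'$, this transfers by elementarity to $\{\varphi(U,b'):b'\in\psi(U)\}$, so $\varphi(x,\mathbf{b})$ does not divide over $M$, and Fact~\ref{thm:forking_definable_types} --- applied to the genuinely infinite orbit of $\mathbf{b}$ over $M$ --- gives an $M$-definable type containing $\varphi(x,\mathbf{b})$, a contradiction. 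So your transversal-in-the-Stone-space construction can be dispensed with entirely; the missing definability step is the theorem, and it is obtained from the forking characterization applied to a monster-model parameter, not to parameters from $M$.
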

\begin{proof}
Let $\varphi(x,y) \in \LL$ and $\psi(y) \in \LL(M)$ be formulas such that $\SSS=\{\varphi(M, b) : b\in \psi(M)\}$. If $\SSS$ does not admit a covering as described in the corollary then, by model-theoretic compactness, there exists some $\textbf{b}\in \psi(U)$ such that $\varphi(U, \textbf{b})$ does not extend to an $M$-definable type in $S_{|x|}(U)$. On the other hand, by Corollary~\ref{cor:pq}, the family $\SSS$ has the $(\omega,n)$-property for every $n>0$, and consequently the formula $\varphi(x,\textbf{b})$ does not divide over $M$. So, by Fact~\ref{thm:forking_definable_types}, $\varphi(x,\textbf{b})$ extends to an $M$-definable type in $S_{|x|}(U)$, contradiction. 
\end{proof}

\begin{remark}\label{rem:parameter-forking}
By \cite[Theorem 3.21]{FTT} and~\cite[Corollary 5.6]{cotter_star_12}, Fact~\ref{thm:forking_definable_types} still holds if we substitute $M$ with any (small) set $A\subseteq U$.  It follows that, in Corollary~\ref{prop:vc_tame_transversals}, if $\SSS$ is $A$-definable for some $A\subseteq M$, then the finite covering $\{\SSS_1,\ldots, \SSS_l\}$ can be chosen so that each $\SSS_i$ extends to an $A$-definable type in $S_k(M)$. 
\end{remark}



\begin{remark}
There is a close relation between $(p,q)$-theorems and so-called Fractional Helly theorems (see~\cite{matousek04}), both of which branched from the classical Helly theorem. In its infinite version, this classical theorem states that every family of closed and bounded convex subsets of $\mathbb{R}^n$ that is $(n+1)$-consistent has nonempty intersection. Aschenbrenner and Fischer proved \cite[Theorem B]{aschen_fischer_11} a definable version of Helly's Theorem (i.e. for definable families of closed and bounded convex sets) in definably complete expansions of real closed fields. 

Our Theorem~\ref{thm:intro_compactness} and the arguments in~\cite[Section $3.2$]{aschen_fischer_11} allow an obvious generalization of the o-minimal part of Aschenbrenner's and Fischer's definable Helly Theorem, by asking that the sets be definably compact and closed in some (any) definable topology, instead of closed and bounded in the Euclidean sense. Perhaps more interestingly, by using Corollary~\ref{prop:vc_tame_transversals} to adapt the second proof of Theorem B in \cite{aschen_fischer_11} (the one right below Theorem 3.7), one may show that, in an o-minimal expansion $\MM$ of an ordered field, every definable family of convex subsets of $M^n$ that is $(n+1)$-consistent extends to a definable type in $S_n(M)$.
\end{remark}

\section{O-minimal types}\label{sec:types}


Throughout this section we assume that our structure $\MM$ is o-minimal. Our aim is to investigate the relationship between definable types and definable downward directed families of sets, in order to apply the results in Section~\ref{sec:topology}. Our two main results, Propositions~\ref{prop:existence_complete_directed_families_2.1} and~\ref{lemma:existence_complete_directed_families}, are of independent interest.

Proposition~\ref{prop:existence_complete_directed_families_2.1} below can be seen as a strong non-parameter form of distal cell decomposition within o-minimality (see Theorem 21(2) in~\cite{cher-sim-2}). It implies that every definable family of sets that extends to a definable type admits a refinement given by a definable downward directed family.

\begin{proposition}\label{prop:existence_complete_directed_families_2.1}
Let $p(x) \in S(M)$ be a type and $\varphi(x,y)$ be a formula. There exists another formula $\psi(x,z)$ such that $p|_\psi^{1}(x)$ is downward directed and 
\[
p|_\psi^{1} \vdash p|_\varphi.
\]
In particular, for every finite subtype $q \subseteq p|_{\varphi}$, there exists $c\in M^{|z|}$ such that $\psi(x,c)\in p(x)$ and 
$
\psi(x,c) \vdash q(x).
$
\end{proposition}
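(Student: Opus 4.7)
My plan is to construct $\psi$ from a cell decomposition adapted to $\varphi$, then verify refinement via completeness of $p$ and downward directedness by applying the decomposition to combined parameter sets. The key tool is the o-minimal version of distal cell decomposition, as the introduction flags this proposition as a ``strong non-parameter form'' of that result.

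First I would apply uniform cell decomposition to $\varphi(x,y)$ to obtain finitely many cell formulas $C_1(x,y),\ldots,C_N(x,y)$ such that, for every $b\in M^{|y|}$, the nonempty sets $C_i(M,b)$ partition $M^{|x|}$ into cells, each contained entirely in $\varphi(M,b)$ or in its complement. Packaging these into a single formula with a flag parameter gives a first candidate $\psi_0(x,y,j)=\bigvee_i (j=c_i\wedge C_i(x,y))$. Refinement $p|^1_{\psi_0}\vdash p|_\varphi$ is immediate: for any $\varphi^\epsilon(x,b)\in p$, completeness of $p$ singles out the unique cell $C_{i(b)}(x,b)\in p$, and by construction $C_{i(b)}(M,b)\subseteq\varphi^\epsilon(M,b)$.

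The sticking point is downward directedness, since the intersection of two cells $C_{i_1}(M,b_1),C_{i_2}(M,b_2)\in p$ is a definable set that is not in general a cell of either single-parameter decomposition. To address this with a single formula of bounded parameter length, I would upgrade $\psi_0$ using distal cell decomposition in o-minimal structures (cf.\ Chernikov--Simon~\cite{cher-sim-2}): there exist finitely many formulas $\theta_1(x,z_1),\ldots,\theta_m(x,z_m)$, each $|z_i|$ a fixed multiple of $|y|$, such that for every finite $B\subseteq M^{|y|}$ the sets $\theta_i(M,\bar b)$ with $\bar b$ a subtuple of $B$ form a cell decomposition of $M^{|x|}$ compatible with $\{\varphi(M,b):b\in B\}$. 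Combining the $\theta_i$ into one formula $\psi(x,z)$ via a flag and padded parameters, refinement follows just as before; downward directedness follows by letting $B$ collect the $|y|$-subtuples appearing in two given instance parameters $c_1,c_2$, applying distal cell decomposition to $B$, and using completeness of $p$ to select the unique cell of that decomposition lying in $p$, which is a $\psi$-instance contained in $\psi(M,c_1)\cap\psi(M,c_2)$. The ``in particular'' statement is then an immediate consequence of the two properties: iterating downward directedness over a finite $q\subseteq p|_\varphi$ produces one $\psi(x,c)\in p$ lying inside every $\psi$-refiner of a formula in $q$, hence entailing $q(x)$.

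The main obstacle I anticipate is ensuring that the distal decomposition for the combined parameter set $B$ really refines the two given $\psi$-instances; equivalently, that each $\theta_i(x,\bar b)$ is a union of cells in the distal decomposition for any $B\supseteq\bar b$. This should follow from the fact that, in o-minimal cell decomposition, enlarging the parameter set refines the cells, but pinning this down against the precise statement of distal cell decomposition is where most of the work sits. A more elementary alternative would be an induction on $|x|$ that peels off the last coordinate via the usual o-minimal cell structure (graphs vs.\ bands of continuous definable functions), but either approach must contend with the central constraint that a single $\psi$ with fixed $|z|$ must serve all finite subfamilies of $p|_\varphi$ simultaneously.
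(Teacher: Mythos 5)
There is a genuine gap, and it sits exactly where you anticipate: the downward directedness step. The distal cell decomposition of \cite{cher-sim-2} only guarantees that, for a finite parameter set $B$, the cells $\theta_i(M,\bar b)$ with $\bar b$ from $B$ are not crossed by the instances $\varphi(M,b)$, $b\in B$; it says nothing about the new cells not crossing the \emph{old cells} $\theta_i(M,\bar b')$ coming from a smaller parameter set. Those old cells are instances of the $\theta_i$, not of $\varphi$ (their boundaries typically involve auxiliary data such as walls through intersection points of boundary curves, which are not $\varphi$-instances), so the decomposition over $B\supseteq\bar b'$ need not refine them, and in general it does not. Consequently the cell of the decomposition over the combined parameters that lies in $p$ need not be contained in $\psi(M,c_1)\cap\psi(M,c_2)$, and your directedness argument breaks. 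Worse, the missing refinement property (monotonicity of the decomposition under enlarging the parameter set, at least along the type $p$) is essentially the content of the proposition itself---this is why the paper describes it as a \emph{strong} form of distal cell decomposition---so one cannot simply import it from the distal literature without circularity.

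The paper's proof is instead the ``more elementary alternative'' you mention, but carried out with a specific device you would still need. It proceeds by induction on $|x|$: the case $|x|=1$ is immediate from o-minimality (intervals), and for $|x|>1$ one splits according to whether $p$ contains the graph of a definable partial function in the last coordinate (then one pushes everything down to $\pi(p)$), or not, in which case uniform cell decomposition reduces to positive instances of the form $(f_b,+\infty)$. The key step is then to apply the induction hypothesis \emph{not} to $\varphi$ itself but to the comparison formula $\theta(u,y,y')$ expressing (on the relevant domain) $f_b\leq f_{b'}$, over the projected type $\pi(p)$; the resulting directed family $\xi(u,z_\xi)$ conjoined with $\varphi(x,y)$ gives $\psi(x,z)$, and directedness of $p|^1_\psi$ follows because for any two parameters in $p$ the type decides which of $\theta(u,b,b')$, $\theta(u,b',b)$ holds, and $\xi$ provides a single instance forcing that comparison. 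Without this (or some other mechanism that genuinely produces the refinement property along $p$), the proposal does not close.
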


To prove the above proposition we will use the following easy lemma, whose proof we leave to the reader. 

\begin{lemma}\label{lem:conjunction}
 Let $p(x)$ be a type and $q_1(x),\ldots, q_k(x)$ be finitely many partial subtypes of $p(x)$. Suppose that, for every $i\leq k$, there exists a formula $\psi_i(x,z_i)$ such that $p|_{\psi_i}^{1}$ is downward directed and $p|_{\psi_i}^{1} \vdash q_i$.
 Then the conjunction 
 \[
 \psi(x,z_1,\ldots,z_k)=``\wedge_{i\leq k} \psi_i(x,z_i)" 
 \]
 satisfies that $p|_\psi^{1}$ is downward directed and 
 \[
 p|_\psi^{1} \vdash \bigcup_{i\leq k} q_i.
 \]
\end{lemma}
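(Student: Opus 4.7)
The plan is to derive both conclusions from a single bookkeeping observation: since $p(x)$ is complete, a conjunction $\wedge_{i\leq k}\psi_i(x,c_i)$ belongs to $p$ if and only if each conjunct $\psi_i(x,c_i)$ does. Consequently $p|_\psi^{1}$ is precisely the set of formulas $\wedge_{i\leq k}\psi_i(x,c_i)$ with $\psi_i(x,c_i)\in p|_{\psi_i}^{1}$ for every $i$.

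For downward directedness of $p|_\psi^{1}$, I would take two elements $\wedge_i \psi_i(x,c_i)$ and $\wedge_i\psi_i(x,d_i)$ and apply downward directedness of each $p|_{\psi_i}^{1}$ coordinatewise: for each $i\leq k$, pick $e_i$ with $\psi_i(x,e_i)\in p|_{\psi_i}^{1}$ and $\psi_i(x,e_i)\vdash \psi_i(x,c_i)\wedge \psi_i(x,d_i)$. Then $\wedge_i\psi_i(x,e_i)\in p|_\psi^{1}$ by the observation above, and it refines both original formulas.

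For the entailment $p|_\psi^{1}\vdash \bigcup_{i\leq k}q_i$, fix $\varphi\in q_{i_0}$. The hypothesis $p|_{\psi_{i_0}}^{1}\vdash q_{i_0}$ supplies a finite $\FF\subseteq p|_{\psi_{i_0}}^{1}$ with $\wedge\FF\vdash \varphi$; iterating downward directedness of $p|_{\psi_{i_0}}^{1}$ collapses $\FF$ to a single formula $\psi_{i_0}(x,c)\in p|_{\psi_{i_0}}^{1}$ with $\psi_{i_0}(x,c)\vdash \varphi$. For each $i\neq i_0$ choose any $\psi_i(x,d_i)\in p|_{\psi_i}^{1}$; then the formula $\psi(x,d_1,\ldots,d_{i_0-1},c,d_{i_0+1},\ldots,d_k)$ lies in $p|_\psi^{1}$ and entails $\varphi$.

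There is essentially no obstacle here; the lemma is a formal unwinding of definitions, leveraging completeness of $p$ to push conjunctions through the restriction operator $(\cdot)|^{1}_{(\cdot)}$. The only minor subtlety is the degenerate case where some $p|_{\psi_i}^{1}$ is empty: the hypothesis $p|_{\psi_i}^{1}\vdash q_i$ would then force $q_i$ to consist only of tautologies, so the entailment on the right-hand side holds vacuously for those indices; this case is excluded in the intended applications (cf.\ Proposition~\ref{prop:existence_complete_directed_families_2.1}) where each $p|_{\psi_i}^{1}$ is nonempty by construction.
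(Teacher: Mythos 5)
Your proof is correct; the paper explicitly leaves this verification to the reader, and your argument---using completeness of $p$ to identify $p|_\psi^{1}$ with the coordinatewise combination of the families $p|_{\psi_i}^{1}$, then checking directedness and entailment coordinatewise---is exactly the intended routine unwinding of the definitions. Your remark on the degenerate case is slightly off target (if some $p|_{\psi_i}^{1}$ is empty the real problem is that $p|_\psi^{1}$ is then empty and so cannot entail a non-trivial $q_j$ for $j\neq i$, not just for the index $i$ itself), but you correctly observe that this case is immaterial since in the application to Proposition~\ref{prop:existence_complete_directed_families_2.1} each $p|_{\psi_i}^{1}$ is nonempty by construction.
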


We now present the proof of the proposition. 

\begin{proof}[Proof of Proposition~\ref{prop:existence_complete_directed_families_2.1}]
We proceed by induction on $|x|$. We may assume throughout that $p(x)$ is not realized, since otherwise it suffices to have $\psi(x,z)$ be the formula $x=z$ where $|x|=|z|$. 

\textbf{Case $|x|=1$.}

By o-minimality it suffices to have $\psi(x,z_1,z_2)$, with $|z_1|=|z_2|=1$, be one of the following three formulas:  
\begin{align*}
    (z_1 < x) \wedge (x < z_2), \\
    z_1 < x, \\
    x < z_1.
\end{align*}

\textbf{Case $|x|>1$.}

Throughout let $x=(u,t)$, where $|t|=1$. Recall that $\pi(p) \in S_{|u|}(M)$ denotes the projection of the type $p$ to the first $|u|$ coordinates, i.e. $\pi(p)(u)$ is the family of all formulas $\lambda(u)$ such that $\lambda(u) \wedge (t=t)$ is in $p(x)$.

Suppose that there exists a definable partial function $f: M^{|x|-1}\rightarrow M$ whose graph is contained in $p$. By extending $f$ if necessary to a constant function outside its domain we may assume that the domain of $f$ is in fact $M^{|x|-1}$.  We may apply the induction hypothesis to the type $\pi(p)$ and formula 
\[
\varphi_f(u,y)=``\exists t ((t=f(u)) \wedge \varphi(u,t,y))",
\]
and obtain a formula $\psi_f(u,z_f)$ as described in the proposition. This allows us to construct our desired formula $\psi$ as follows:
\[
\psi(x,z_f)=\psi(u,t,z_f)=``(t=f(u)) \wedge \psi_f(u,z_f)".
\]

We show that $\psi(x,z_f)$ has the desired properties.  
Observe that, since the graph of $f$ is contained in $p$, for every $b\in M^{|y|}$ and $i\in \{0,1\}$, the formula $\varphi^i(x,b)$ belongs in $p$ if and only if $\varphi^i_f(u,b)$ belongs in $\pi(p)$. The analogous holds for $\psi(x,z_f)$ and $\psi_f(u,z_f)$. In particular, we may define $C=\{ c \in M^{|z_f|} : \psi_f(u,c) \in \pi(p)\}=\{ c \in M^{|z_f|} : \psi(x,c) \in p\}$.
Since, by induction hypothesis, the family of formulas $\{\psi_f(u,c) : c\in C\}$ is downward directed, then the same clearly holds for $p|_\psi^{1}=\{\psi(x,c) : c\in C\}$.
Moreover, for any formula of the form $\varphi^i(x,b)$ in $p(x)$, where $b\in M^{|y|}$ and $i\in \{0,1\}$, there exists $c\in C$ such that $\psi_f(u,c) \vdash \varphi_f^i(u,b)$, and so $\psi(x,c) \in p$ and $\psi(x,c) \vdash \varphi^i(x,b)$. Hence $p|_\psi^{1} \vdash p|_\varphi$.

Hence onwards we assume that there does not exists a definable partial function $f:M^{|x|-1}\rightarrow M$ whose graph is contained in $p$.

In the next paragraphs
we reduce the remaining of the proof to the case where, for every $b\in M^{|y|}$, if the formula $\varphi(x,b)$ is in $p$, then it defines a set of the form $(f_b, +\infty)$ for some partial function $M^{|x|-1}\rightarrow M \cup \{-\infty\}$. 

By o-minimal uniform cell decomposition~\cite[Chapter 3, Proposition 3.5]{dries98}, there exist finitely many formulas $\sigma_1(x,y), \ldots, \sigma_k(x,y)$ such that, for every $b\in M^{|y|}$, the family $\{ \sigma_1(M,b), \ldots, \sigma_k(M,b)\}$ is an o-minimal cell decomposition of $M^{|x|}$ compatible with $\varphi(M,b)$. Observe that
\[
\bigcup_{i\leq k} p|_{\sigma_i}^{1} \vdash p|_\varphi.
\]
By Lemma~\ref{lem:conjunction}, it suffices to pass to an arbitrary $i\leq k$ and prove the proposition for $p|_{\sigma_i}^{1}$ in place of $p|_\varphi$. Hence onwards let us assume that, for every $b\in M^{|y|}$, the formula $\varphi(x,b)$ defines a cell and, moreover, if $\varphi(x,b)\in p$, then, by assumption on $p$, this cell is of the form $(f_b, g_b)$, for $f_b$ and $g_b$ partial functions $M^{|x|-1}\rightarrow M \cup\{-\infty, +\infty\}$ with the same domain and with $f_b < g_b$. Additionally, to prove the proposition it suffices to find $\psi(x,z)$ such that $p|_\psi^{1}$ is downward directed and $p|_\psi^{1}(x)\vdash p|_\varphi^{1}(x)$. 

Recall the notation $x=(u,t)$ with $|t|=1$. Let $B=\{ b\in M^{|y|} : \varphi(x,b) \in p\}$. Let $\varphi_0(x,y)$($=\varphi_0(u,t,y)$) denote the formula $\exists s (s\leq t) \wedge \varphi(u,s,y)$, and similarly let $\varphi_1(x,y)$ be the formula $\exists s (s\geq t) \wedge \varphi(u,s,y)$. That is, for every $b\in B$, the formulas $\varphi_0(x,b)$ and $\varphi_1(x,b)$ define the sets $(f_b, +\infty)$ and $(-\infty, g_b)$ respectively. In particular, when $b\in B$, the formula $\varphi(x,b)$ is equivalent to the conjunction $\varphi_0(x,b) \wedge \varphi_1(x,b)$. So \mbox{$p|_{\varphi_0}^{1} \cup p|_{\varphi_1}^{1} \vdash p|_{\varphi}^{1}$}. By Lemma~\ref{lem:conjunction}, to prove the proposition it suffices to find formulas $\psi_0(x,z_0)$ and $\psi_1(x,z_1)$ such that, for every $j\in \{0,1\}$, the restriction $p|_{\psi_j}^{1}$ is downward directed and $p|_{\psi_j}^{1} \vdash p|_{\varphi_j}^{1}$. We prove this for $j=0$, being the remaining case analogous. For simplicity of notation we also assume that $\varphi$ is equivalent to $\varphi_0$.

Consider the formula 
\begin{align*}
\theta(u,y,y') = ``&\exists s\, \varphi(u,s,y) \wedge \exists t\, \varphi(u,t,y') \\
& \wedge \forall t (\varphi(u,t,y') \rightarrow \varphi(u,t,y))".
\end{align*}
For every $b, b'\in M^{|y|}$ note that it holds that 
\begin{equation}\label{eqn:types1}
\theta(u,b,b') \wedge \varphi(x,b') \vdash \varphi(x,b).
\end{equation}
In particular, if $b$ and $b'$ are in $B$, then $\theta(u,b,b')$ defines the set of all $u$ such that $f_b(u) \leq f_{b'}(u)$.

Recall notation $\pi(p)$ for the projection of $p$ to the first $|u|=|x|-1$ coordinates.
By induction hypothesis on the formula $\theta(u,y,y')$ and the type $\pi(p)$, there exists a formula $\xi(u,z_\xi)$ such that $\pi(p)|_{\xi}^{1}$ is downward directed and $\pi(p)|_{\xi}^{1} \vdash\pi(p)|_{\theta}$.

Finally, let $z=(z_\xi,y)$ and
\[
\psi(x,z) = \psi(u,t,z_\xi,y)= ``\xi(u,z_\xi)\wedge \varphi(x,y)".
\]
Clearly by construction $p|_\psi^{1} \vdash p|_\varphi^{1}$. We show that that $p|_\psi^{1}$ is downward directed. 

Let $D=\{d\in M^{|z_\xi|} : \xi(u,d) \in \pi(p)\}$. Note that $\xi(u,d) \wedge \varphi(x,b)$ belongs in $p$ if and only of $b\in B$ and $d\in D$. 
Let us fix $b, b' \in B$ and $d, d' \in D$. Recall that $\varphi(M, b)=(f_b, +\infty)$ and $\varphi(M, b')=(f_{b'}, +\infty)$.
Consider the formula $\zeta(u,b,b')=``\exists s \varphi(u,s,b) \wedge \exists t \varphi(u,t,b')"$, which defines the intersection of the domains of $f_b$ and $f_{b'}$. Clearly $\zeta(u,b,b') \in \pi(p)$. Observe that the sets $\theta(M,b,b')$ and $\theta(M,b',b)$ cover $\zeta(M,b,b')$, and so at least one of them belongs in $\pi(p)$. Without loss of generality we assume that $\theta(u,b,b') \in \pi(p)$. 

Let $d''\in D$ be such that $\xi(u,d'')\vdash \theta(u,b,b')$. By Equation~\eqref{eqn:types1} we have that
\[
\xi(u,d'') \wedge \varphi(x,b') \vdash \varphi(x,b).
\]
By downward directedness let $d''' \in D$ be such that 
\[
\xi(u,d''') \vdash \xi(u,d) \wedge \xi(u,d') \wedge \xi(u,d'').
\]
We conclude that
\[
\xi(u,d''') \wedge \varphi(x,b') \vdash \xi(u,d) \wedge \varphi(x,b) \wedge \xi(u,d') \wedge \varphi(x,b'),
\]
or equivalently
\[
\psi(x,d''',b') \vdash \psi(x,d,b) \wedge \psi(x,d',b').
\]
So $p|_\psi^{1}$ is downward directed.
\end{proof} 


It seems likely that Proposition~\ref{prop:existence_complete_directed_families_2.1} is also true in weakly o-minimal structures. As far as the author knows, it is open among distal dp-minimal structures. 

The following proposition shows that every definable downward directed family of nonempty sets extends to a definable type $p(x)\in S(M)$, and furthermore that $p(x)$ can be chosen so that, for some formula $\psi(x,z)$, the restriction $p|^1_\psi (x)$ is a basis (in the sense of filter basis) of cells for $p(x)$.   
We present a shorter proof than the one in~\cite[Lemma 2.7]{FTT}, applying ideas communicated to the author by Will Johnson. 

\begin{proposition}\label{lemma:existence_complete_directed_families}
Let $\varphi(x,y)$ be a formula and $B\subseteq M^{|y|}$ be such that the family $\{\varphi(x,b) : b\in B\}$ is consistent and downward directed. Then there exists a type $p(x)\in S(M)$ with $\{\varphi(x,b) : b\in B\} \subseteq p(x)$, and a formula $\psi(x,z)$ such that $p|_\psi^1$ defines a family of cells, is downward directed, and $p|_\psi^1 \vdash p$. Furthermore, if $B$ is definable, then $p(x)$ can be chosen definable too. 

In particular, for every definable downward directed family of nonempty sets $\SSS$, there exists a definable downward directed family of cells $\FF$ which refines $\SSS$ and furthermore $\FF$ generates a definable type in $S(M)$.
\end{proposition}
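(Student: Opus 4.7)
The plan is to first extend the family to a complete (and, in the definable case, definable) type, and then use Proposition~\ref{prop:existence_complete_directed_families_2.1} together with o-minimal uniform cell decomposition to extract the desired formula $\psi$.

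First I would extend $\{\varphi(x,b) : b \in B\}$ to a complete type $p \in S(M)$. For arbitrary $B$ this is a routine application of Zorn/compactness. For definable $B$, $p$ must additionally be chosen definable; I would achieve this by induction on $|x|$, at each coordinate extending the projected downward-directed family by a definable cut (the one-variable case admits a canonical definable extension, as exhibited by the base case of Proposition~\ref{prop:existence_complete_directed_families_2.1}).

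Next I would apply Proposition~\ref{prop:existence_complete_directed_families_2.1} to $p$ and $\varphi$ to obtain a formula $\psi_0(x,z)$ with $p|_{\psi_0}^{1}$ downward directed and $p|_{\psi_0}^{1} \vdash p|_{\varphi}$. Applying uniform cell decomposition to $\psi_0$ and combining the finitely many resulting pieces via Lemma~\ref{lem:conjunction}, I would refine $\psi_0$ to a formula $\psi$ whose instances are cells, while preserving both downward-directedness and the derivability. To upgrade $p|_\psi^{1} \vdash p|_{\varphi}$ to $p|_\psi^{1} \vdash p$, I would argue by induction on $|x|$, mirroring the structure of Proposition~\ref{prop:existence_complete_directed_families_2.1}: the base case $|x|=1$ produces a basis of intervals that automatically generates the entire one-variable type, since by o-minimality every formula in $p$ contains an interval/ray of the cut; the inductive step, with $x = (u, t)$ and $|t|=1$, extends the inductively-obtained basis in $u$ fiber-wise in the $t$-direction using the base case and an auxiliary comparison formula (analogous to the formula $\theta$ appearing at the end of the proof of Proposition~\ref{prop:existence_complete_directed_families_2.1}) to guarantee uniformity across the $u$-projection.

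The main obstacle is precisely this last upgrade to $p|_\psi^{1} \vdash p$: Proposition~\ref{prop:existence_complete_directed_families_2.1} only guarantees refinement of $p|_{\varphi}$, whereas we need $\psi$ to capture \emph{every} formula of $p$. The resolution exploits the cell-structure of o-minimal theories: every formula $\chi(x) \in p$ decomposes via uniform cell decomposition into finitely many cells, each of which is refined by some $\psi$-instance by the inductive construction of the basis, so downward-directedness collapses these finitely many witnesses into a single one. Definability of $p$ and of the family $p|_\psi^{1}$ is preserved throughout, as both Proposition~\ref{prop:existence_complete_directed_families_2.1} and uniform cell decomposition are parameter-uniform, which yields the final \emph{in particular} statement about definable $\SSS$ upon taking $\varphi$ to be the defining formula of $\SSS$.
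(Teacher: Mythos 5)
There is a genuine gap, and it sits exactly where you locate ``the main obstacle''. Your plan fixes a completion $p$ of the family \emph{first} (Zorn in general, a sketched coordinate-wise ``definable cut'' extension in the definable case) and only afterwards looks for a single formula $\psi$ with $p|_\psi^1$ downward directed and $p|_\psi^1 \vdash p$. For a pre-chosen, arbitrary extension $p$ this last step is impossible in general: Proposition~\ref{prop:existence_complete_directed_families_2.1} only yields $p|_{\psi_0}^1 \vdash p|_\varphi$ for the one fixed formula $\varphi$ (or, via Lemma~\ref{lem:conjunction}, finitely many formulas), and your proposed upgrade --- decompose an arbitrary $\chi(x)\in p$ into cells and claim the cell lying in $p$ is refined by some $\psi$-instance ``by the inductive construction of the basis'' --- has no justification, since the $\psi$-instances were manufactured only from $\varphi$ and the auxiliary comparison formulas, not from $\chi$. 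If that argument worked, it would show that \emph{every} definable type has a filter basis given by its restriction to a single formula, which is false: as recorded in the remark following the proposition, in an o-minimal expansion of the group of reals there are $n$-types for $n>2$ (all definable, by Marker--Steinhorn) admitting no such basis. So the conclusion $p|_\psi^1\vdash p$ cannot be reached for an arbitrary completion; the type $p$ has to be constructed \emph{together with} its one-formula basis.

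This is precisely what the paper's proof does and what your step of producing $p$ glosses over. In the definable case it considers the collection of all definable downward directed refinements $\FF$ of $\SSS$ not containing $\emptyset$, minimizes first the ``cofinal dimension'' $d(\FF)$ and then the number $c(\FF)$ of definably connected components achievable inside every member, and shows by a boundary-of-$X$/connectedness argument that the minimizing family already generates a complete (hence definable) type; only the cell condition is then obtained the way you suggest, by passing to one formula of a uniform cell decomposition of $\psi$ and using downward directedness to see that a single cell formula refines the basis. Your coordinate-wise extension in the definable case is likewise only asserted: having a definable type extending the projected family says nothing about how to choose the cut in the last coordinate definably over $M$ while keeping every $\varphi(x,b)$ in the type, and that existence statement is essentially the ``furthermore'' clause being proved (in the paper it can alternatively be deduced from the forking machinery of Corollary~\ref{prop:vc_tame_transversals} and Remark~\ref{rem:parameter-forking}, not from a routine induction). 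Proposition~\ref{prop:existence_complete_directed_families_2.1} runs in the opposite direction (from a given type to a directed refinement of one formula) and cannot substitute for this construction.
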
 
\begin{proof}
We devote most of the prove to show the existence of $p(x)$ and $\psi(x,z)$ as described in the proposition except for the condition that $p|_\psi^1$ defines a family of cells. 
In the next two paragraphs we describe how, once we have these, by passing if necessary to a formula in a cell decomposition of $\psi(x,z)$ we may assume that $p|_\psi^1$ defines a family of cells, completing the proof. 

Applying uniform cell decomposition~\cite[Chapter 3, Proposition 3.5]{dries98} to the formula $\psi(x,z)$, let $\sigma_i(x,z)$, for $i\leq k$, denote formulas such that, for every $c\in M^{|z|}$, the sets $\sigma_i(M,c)$, for $i\leq k$, are a cell partition of $\psi(M,c)$. We claim that there exists some $i\leq k$ such that the family $p|_{\sigma_i}^1$ is downward directed and $p|_{\sigma_i}^1 \vdash p|_{\psi}^1$ (hence $p|_{\sigma_i}^1\vdash p$). To see this let $p|_\psi^1 = \{ \psi(x,c) : c\in C\}$ and, for every $i\leq k$, let $p|_{\sigma_i}^1=\{\sigma_i(x,c) : c\in C(i)\}$. We show that there exists $i\leq k$ such that, for every $c\in C$, there exists some $c'\in C(i)$ with $\sigma_i(x,c')\vdash \psi(x,c)$ (i.e. $p|_{\sigma_i}^1$ refines $p|_\psi^1$); hence $p|_{\sigma_i}^1 \vdash p|_{\psi}^1$ and, using the facts that $p|_\psi^1 \vdash p$ and $p|_\psi^1$ is downward directed, it is also easy to derive that $p|_{\sigma_i}^1$ is downward directed. 

Towards a contradiction suppose that, for every $i\leq k$, there exists some $c_i \in C$ such that $\sigma_i(x,c) \nvdash \psi(x,c_i)$ for every $c\in C(i)$. By downward directedness of $p|_{\psi}^1$, let $c_{k+1}\in C$ be such that $\psi(x,c_{k+1})\vdash \wedge_{i\leq k} \psi(x,c_i)$. It follows that $\sigma_i(x,c) \nvdash \psi(x,c_{k+1})$ for every $i\leq k$ and $c\in C(i)$. However this contradicts the facts that $\psi(x,c_{k+1})\in p(x)$ and $\vdash \psi(x,c_{k+1}) \leftrightarrow \vee_{i\leq k} \sigma_i(x,c_{k+1})$, which imply that there exists some $i\leq k$ with $\sigma_i(x,c_{k+1}) \in p(x)$ (i.e. $c_{k+1}\in C(i)$) and $\sigma_i(x,c_{k+1})\vdash \psi(x,c_{k+1})$.

We now begin the prove of the existence of a type $p(x) \in S(M)$ extending $\{\varphi(x,b) : b\in B\}$ and a formula $\psi(x,z)$ satisfying that $p|_\psi^1$ is downward directed and $p|_\psi^1 \vdash p$ (i.e. $p|_\psi^1$ is a basis for $p$). We prove the case where $B$ is definable. In the general case the same proof applies by considering throughout, instead of definable families of sets, subfamilies of fibers of definable sets in general. To make the presentation more succinct, we work explicitly with set notation rather than formulas. 

We introduce some useful terminology. For a definable family of nonempty sets $\FF$, let $d(\FF)$ denote the smallest $n\geq 0$ such that, for every set $F\in \FF$, there exists $G\in \FF$ with $G\subseteq F$ and $\dim(G)=n$. Let $c(\FF)$ denote the smallest $m\geq 1$ such that, for every set $F\in \FF$, there exists $G\in \FF$ with $G\subseteq F$ such that $G$ has exactly $m$ definably connected components.

Let $\SSS=\{\varphi(M,b) : b\in B\}$. Recall that a family of sets $\FF$ is a refinement of $\SSS$ if, for every $S\in \SSS$, there exists $F\in \FF$ with $F\subseteq S$. Let $\mathcal{DR}$ denote the collection of all definable downward directed refinements of $\SSS$ which do not contain the empty set. Throughout we fix $n=\min\{d(\FF) : \FF\in\mathcal{DR}\}$ and $m = \min \{c(\FF) : \FF\in \mathcal{DR},\, d(\FF)=n\}$. We also fix $\FF \in \mathcal{DR}$ with $d(\FF)=n$ and $c(\FF)=m$. We show that $\FF$ generates a (clearly definable) type in $S_{|x|}(M)$. 

Towards a contradiction we assume that $\FF$ does not generate a type in $S_{|x|}(M)$, meaning that there exists a definable set $X\subseteq M^{|x|}$ satisfying that, for every $F\in \FF$, $F\cap X\neq \emptyset$ and $F\setminus X\neq \emptyset$. Let us fix some $\lemF\in \FF$ with $\dim \lemF=n$. 

Consider the boundary of $\lemF \cap X$ in $\lemF$, i.e. the set $Z=\lemF \cap (\partial(\lemF\cap X) \cup \partial(\lemF\setminus X))$. Since $\dim \lemF=n$, by o-minimality we have that $\dim Z < n$. It follows that $\FF \cap Z$ is a downward directed refinement of $\SSS$ composed of sets of dimension lower than $n$. By definition of $n$, there must exist a set $\lemH\in \FF$ with $\lemH\cap Z=\emptyset$. Now let $\GG=\{F \cap X : F \in \FF,\, F\subseteq \lemF \cap \lemH\}$. By downward directedness of $\FF$ and definition of $X$ the definable family $\GG$ is a downward directed refinement of $\SSS$ that does not contain the empty set (i.e. $\GG\in \mathcal{DR}$). By definition of $n$ it follows that $d(\GG)=n$. We show that $c(\GG)<m$, contradicting the definition of $m$.

We show that, for every $F\in \FF$ with $F\subseteq \lemF\cap \lemH$, the intersection $F\cap X \in \GG$ has strictly less definably connected components than $F$. In particular, this implies that, for every set $F\in \FF$ with $F\subseteq \lemF\cap \lemH$, if $G\in \FF$ is a subset of $F$ with exactly $m$ definably connected components, then $G\cap X \in \GG$ has less than $m$ definably connected components, and so $c(\GG) < m$ as desired.  
 
Let $Y$ denote the interior of $\lemF\cap X$ in $\lemF$, i.e. $Y=\lemF\setminus cl(\lemF\setminus X)$. Let $C \subseteq \lemF$ be a definably connected set. If $C\cap Z=\emptyset$, then by definition of $Z$ clearly $C$ must be a subset of either $Y$ or $\lemF\setminus (Y\cup Z)$. Since $Y\subseteq \lemF \cap X \subseteq Y\cup Z$, then $C$ must be a subset of either $F_0 \cap X$ or $\lemF\setminus X$. Now let us fix a set $F\in \FF$ with $F\subseteq \lemF \cap \lemH$. Since $\lemH\cap Z=\emptyset$ we have that $F\cap Z = \emptyset$, and so every definably connected component $C$ of $F$ is a subset of either $F_0 \cap X$ or $F_0\setminus X$. Finally recall that, by definition of $X$, the sets $F \cap X$ and $F \setminus X$ are both nonempty. Consequently we conclude that the set $F \cap X$ (as well as $F\setminus X$) has a positive number of definably connected components that is lesser than the number of definably connected components of $F$.  
\end{proof}

In Proposition~\ref{lemma:existence_complete_directed_families}, whenever $B$ is definable, one may wonder if $p(x)$ can always be chosen definable over the same parameters as $B$. This was proved to be false in general by Johnson in~\cite[Appendix B]{FTT}. Nevertheless, by~\cite[Proposition 2.17]{FTT} it does hold that every definable downward directed family $\{\varphi(x,b) : b\in B\}$ extends to a type in $S_{|x|}(M)$ definable over the same parameters as $B$. This can also be proved using Corollary~\ref{prop:vc_tame_transversals} and Remark~\ref{rem:parameter-forking}. For a similar result see~\cite[Lemma
4.2.18]{hruloeser16}.
 
\begin{remark}
Observe that Propositions~\ref{prop:existence_complete_directed_families_2.1} and~\ref{lemma:existence_complete_directed_families} together yield a strong density result for types $p(x)$ satisfying that there is a formula $\varphi(x,y)$ such that $p|_\varphi$ is downward directed and $p|_\varphi\vdash p$, namely types which have a basis (in the sense of filter basis) given by their restriction to a single formula. This is discussed in~\cite[Remarks 2.13 and 2.22]{FTT}. In any o-minimal structure every $1$-type is of this kind (it is either realized or has a basis of open intervals). On the other hand, it was shown in~\cite[Corollary 32]{atw1} that, in an o-minimal expansion of an ordered group, every definable type of this kind contains at least one set of dimension at most $2$ (and of dimension at most $1$ in o-minimal expansions of ordered fields). Using the Marker-Steinhorn Theorem~\cite[Theorem 2.1]{mark_stein_94} one derives that, in any o-minimal expansion of the group of reals, there are $n$-types that do not have a basis given by their restriction to a single formula, for every $n>2$ ($n>1$ in o-minimal expansions of the field of reals). 
\end{remark}

\section{O-minimal definable compactness}\label{sec:topology}


\subsection{Topological preliminaries} \label{sec:prelim-top}

We introduce definable (explicitly in the sense of Flum and Ziegler~\cite{FZ}) topological spaces and various related definitions. 

\begin{definition}\label{def:dts}
A \emph{definable topological space} $(X,\tau)$, with $X\subseteq M^n$, is a topological space such that there exists a definable family of subsets of $X$ which is a basis for $\tau$.

\end{definition}  

Any definable set in an o-minimal structure with its induced Euclidean topology is a definable topological space. For other examples within o-minimality, see the definable manifold spaces studied by Pillay~\cite{pillay88} and van den Dries~\cite[Chapter 10]{dries98}, the definable Euclidean quotient spaces of van den Dries~\cite[Chapter 10]{dries98} and Johnson~\cite{johnson14}, the definable normed spaces of Thomas~\cite{thomas12}, and the definable metric spaces of Walsberg~\cite{walsberg15}. See moreover the author's doctoral dissertation~\cite{andujar_thesis} for an exhaustive exploration of o-minimal definable topological spaces. 
For a foundational treatment of definable tame topology generalizing o-minimality see the work of Pillay~\cite{pillay87}. For an exploration of dp-minimal tame topology see the more recent work of Simon and Walsberg~\cite{sim-wals19}, and related independent work of Dolich and Goodrick~\cite{dolich-good22}.

Onwards we contextualize topological notions related to a given topological space $(X,\tau)$ by adding the prefix $\tau$, e.g. $\tau$-open, $\tau$-closure etc. We recall some standard definitions. 

\begin{definition}
Let $(X,\tau)$ be a definable topological space. A \emph{definable curve in $X$} is a definable map $\gamma:(a,b)\rightarrow X$, for some $-\infty\leq a < b \leq +\infty$. We say that it \emph{$\tau$-converges} to $x\in X$ (i.e. $x$ is a \emph{$\tau$-limit of $\gamma$)} as $t\rightarrow a$ if, for every $\tau$-neighborhood $A$ of $x$, there exists $t_A\in (a,b)$ such that $\gamma(s)\in A$ whenever $s\in (a,t_A)$. The notion of $\tau$-convergence as $t\rightarrow b$ is defined analogously. We denote by $\taulim_{t\rightarrow a} \gamma(t)$ (respectively $\taulim_{t\rightarrow b} \gamma(t)$) the set of $\tau$-limit points of $x$ as $t\rightarrow a$ (respectively $t\rightarrow b$). 

We say that $\gamma$ is \emph{$\tau$-completable} if it $\tau$-converges as $t\rightarrow a$ and as $t\rightarrow b$.
\end{definition}

Given a definable topological space $(X,\tau)$ and a set $Y\subseteq X$ we denote the $\tau$-closure of $Y$ by $cl_\tau(Y)$.
It is easy to check that a $\tau$-limit of a definable curve $\gamma:(a,b)\rightarrow Y\subseteq X$ is always contained in $cl_\tau(Y)$. Furthermore, if $\tau$ is Hausdorff, then the sets $\taulim_{t\rightarrow a} \gamma(t)$ and $\taulim_{t\rightarrow b} \gamma(t)$ are always either empty or a singleton, and in the latter case we abuse terminology by identifying them with their single point. We will use these facts in Section~\ref{sec:compactness} without explanation. To erase ambiguity, at times we also use side convergence notation $t\rightarrow a^+$ and $t\rightarrow b^-$ (e.g. $\taulim_{t \rightarrow a^+} \gamma(t)$), with the standard meaning.

The following definition is borrowed from~\cite{hruloeser16}.

\begin{definition}
Let $(X,\tau)$ be a definable topological space and $p$ be a (possibly partial) type with $X\in p$. We say that $x\in X$ is a \emph{$\tau$-limit\footnote{Fornasiero~\cite{fornasiero}, as well as Thomas, Walsberg and the author~\cite{atw1}, use the word ``specialization" (borrowed from real algebraic geometry) to refer to limits of types. Here we use instead the terminology from Hrushovski and Loeser~\cite[Chapter 4]{hruloeser16}.} of $p$} if $x$ is contained in the $\tau$-closure of every subset of $X$ in $p$. If $p(x)\in S_X(M)$, then this is equivalent to saying that $x$ in contained in every $\tau$-closed set in $p$.
\end{definition}

We now present various definitions extracted from the literature (for references see Section~\ref{section:intro}) which seek to capture the notion of definable compactness. We mostly maintain consistency with~\cite{and-johnson} in the names. (In particular we avoid using the adjective ``definable" in our terminology to enable an easier read.) A more general approach to definable compactness, including more definitions than the ones in this paper, can be found in unpublished work of Fornasiero~\cite{fornasiero}. 

\begin{definition}\label{dfn:compact}
Let $(X,\tau)$ be a definable topological space. Then $(X,\tau)$ is: 
\begin{enumerate}[(1)]
\item \emph{curve-compact} if every definable curve in $X$ is $\tau$-completable. 
\item \emph{filter-compact} if every downward directed definable family of nonempty $\tau$-closed subsets of $X$ has nonempty intersection,
\item \emph{type-compact} if every definable type $p(x) \in S_X(M)$ has a $\tau$-limit in $X$.
\item \emph{transversal-compact} if every consistent definable family of $\tau$-closed subsets of $X$ has a finite transversal. 
\end{enumerate}
\end{definition}  

The equivalence between curve-compactness and filter-compactness was proved for definable topological spaces in o-minimal expansions of ordered fields in~\cite[Corollary 44]{atw1}. In this paper we present a deeper characterization in the general o-minimal setting. 

\subsection{Characterizing definable compactness} \label{sec:compactness}

In this section we prove our results on definable compactness for definable topological spaces in o-minimal structures.
Throughout we assume that our underlying structure $\MM$ is o-minimal.

We devote most of the section to proving the characterization of definable compactness given by Theorem~\ref{thm:intro_compactness}, which we divide into three propositions. Proposition~\ref{prop:pre_specialization_compactness} provides the equivalence \eqref{itm:compactness_2}$\Leftrightarrow$\eqref{itm:compactness_2.5} in the theorem. In Proposition~\ref{prop:compactnes_transversals} we prove, using results from previous sections, the equivalence between~\eqref{itm:compactness_1}, \eqref{itm:compactness_2.5}, \eqref{itm:compactness_3}, \eqref{itm:compactness_5} and~\eqref{itm:compactness_4}. Finally, in Proposition~\ref{prop:curve_compact_iff_directed_compact} we prove the implication~\eqref{itm:compactness_1}$\Rightarrow$\eqref{itm:compactness_6}, and the reverse implication when $\tau$ is Hausdorff or $\MM$ has definable choice. We follow it with an example (Example~\ref{example:space-compact}) showing that the implication~\eqref{itm:compactness_1}$\Rightarrow$\eqref{itm:compactness_6} is strict in general. Throughout we also discuss other notions of definable compactness, and end the section with two additional results: definable compactness is equivalent to classical compactness in o-minimal expansions of $(\mathbb{R},<)$ (Corollary~\ref{cor:definably_compact_iff_compact}), and definable compactness is definable in families (Proposition~\ref{prop:comp-families}).

The equivalence $\eqref{itm:specialization_compactness_i}\Leftrightarrow\eqref{itm:specialization_compactness_ii}$ in Proposition~\ref{prop:pre_specialization_compactness} below corresponds to the equivalence $\eqref{itm:compactness_2}\Leftrightarrow\eqref{itm:compactness_2.5}$ in Theorem~\ref{thm:intro_compactness}. Note that the proof of this equivalence does not use o-minimality. Hence this characterization of type-compactness holds in any model-theoretic structure. 
Furthermore, the equivalence of type-compactness with classical compactness always holds whenever the underlying structure $\MM$ satisfies that every type in $S(M)$ is definable, as we point out in Remark~\ref{rem:type-comp} below.

\begin{proposition}\label{prop:pre_specialization_compactness}
Let $(X,\tau)$ be a definable topological space. The following are equivalent. 
\begin{enumerate}[(1)]
\item \label{itm:specialization_compactness_i} $(X,\tau)$ is type-compact.
\item \label{itm:specialization_compactness_ii} Every definable family of $\tau$-closed sets that extends to a definable type in $S_X(M)$ has nonempty intersection.
\setcounter{specialization_compactness}{\value{enumi}}
\end{enumerate}
If $\MM$ expands $(\mathbb{R},<)$, then \eqref{itm:specialization_compactness_i} and \eqref{itm:specialization_compactness_ii} are also equivalent to:
\begin{enumerate}[(1)]
\setcounter{enumi}{\value{specialization_compactness}}
\item \label{itm:specialization_compactness_iii} $(X,\tau)$ is compact. 
\end{enumerate}
\end{proposition}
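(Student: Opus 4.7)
The plan is to prove (1) $\Leftrightarrow$ (2) in full generality (no o-minimality needed) and then (1) $\Leftrightarrow$ (3) in the $\mathbb{R}$ case by invoking the Marker-Steinhorn theorem.

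The implication (1) $\Rightarrow$ (2) is essentially immediate: if $\CC$ is a definable family of $\tau$-closed sets extending to a definable type $p \in S_X(M)$ and $x$ is a $\tau$-limit of $p$, then $x$ lies in every $\tau$-closed set of $p$ and hence in $\cap \CC$. For (2) $\Rightarrow$ (1), the idea is to use a definable basis $\{U(M,a) : a \in A\}$ for $\tau$ together with the definability of $p$ to form the definable family $\CC = \{X \setminus U(M,a) : a \in A,\ X \setminus U(M,a) \in p\}$, which trivially extends to $p$. Applying (2) yields $x \in \cap \CC$, and a short argument shows $x$ is a $\tau$-limit: if some $\tau$-closed $C \in p$ missed $x$, a basic neighborhood $U(M,a) \ni x$ contained in $X \setminus C$ would lie in $p$ by completeness of $p$, contradicting $U(M,a) \cap C = \emptyset$ together with the consistency of $p$.

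For (1) $\Leftrightarrow$ (3) under the hypothesis that $\MM$ expands $(\mathbb{R},<)$, the key input is that every complete type in $S_X(\mathbb{R})$ is definable, by the Marker-Steinhorn theorem. Granted this, (3) $\Rightarrow$ (1) is clear: for any definable type $p$, the family of $\tau$-closed sets in $p$ has the finite intersection property (finite conjunctions of formulas in $p$ define nonempty subsets of $X$), so classical compactness of $X$ supplies a common point, which is a $\tau$-limit of $p$. Conversely for (1) $\Rightarrow$ (3), I would invoke the ultrafilter characterization of compactness: given any ultrafilter $\mathcal{G}$ on $X$, restricting $\mathcal{G}$ to $\mathbb{R}$-definable subsets of $X$ yields a complete type $p \in S_X(\mathbb{R})$, which is definable by Marker-Steinhorn. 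By (1), $p$ has a $\tau$-limit $x$, and essentially the same basis-and-completeness argument as in (2) $\Rightarrow$ (1) shows that every $\tau$-neighborhood of $x$ lies in $\mathcal{G}$, so $\mathcal{G}$ converges to $x$.

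The main subtlety lies in (2) $\Rightarrow$ (1), where one must leverage definability of $p$ to convert the topological basis into a definable family of $\tau$-closed sets whose nonempty intersection captures a $\tau$-limit; once this is set up, the rest is routine. The equivalence with classical compactness is then essentially a translation between types over $\mathbb{R}$ and topological ultrafilters, bridged by Marker-Steinhorn.
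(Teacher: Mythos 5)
Your proposal is correct and follows essentially the same route as the paper: the equivalence of (1) and (2) is proved by converting a definable basis into the definable family of basic $\tau$-closed sets belonging to the (definable) type and observing that every closed set is an intersection of basic closed sets, and the real case rests on Marker--Steinhorn definability of all types over $\mathbb{R}$. The only cosmetic difference is that for (1)$\Rightarrow$(3) you invoke the ultrafilter-convergence characterization of compactness and restrict the ultrafilter to definable sets, whereas the paper works directly with a consistent family of closed sets, reduces it to basic (hence definable) closed sets, and extends that to a complete type; both arguments reduce to the same use of Marker--Steinhorn.
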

\begin{proof}
To prove \eqref{itm:specialization_compactness_i}$\Rightarrow$\eqref{itm:specialization_compactness_ii}, suppose that $(X,\tau)$ is type-compact and let $\CC$ be a definable family of $\tau$-closed sets that extends to a definable type $p \in S_X(M)$. Let $x\in X$ be a $\tau$-limit of $p$. Then clearly $x\in \cap\CC$. 

The key element to the rest of the proof is the fact that every closed set in a topological space is an intersection of basic closed sets. 

To prove \eqref{itm:specialization_compactness_ii}$\Rightarrow$\eqref{itm:specialization_compactness_i}, let $p\in S_X(M)$ be a definable type. Let $\BB$ denote a definable basis (of opens) for the topology $\tau$. Now let $\CC$ denote the definable family of basic $\tau$-closed sets in $p$, i.e. the family of sets $C$ in $p$ of the form $X\setminus B$ for some $B\in \BB$. If \eqref{itm:specialization_compactness_ii} holds, then there exists some $x\in X$ with $x\in \cap \CC$. In this case it follows that $x$ is a $\tau$-limit of $p$.

Finally, suppose that $\MM$ expands $(\mathbb{R},<)$. Clearly, if $(X,\tau)$ is compact, then it is type-compact. Conversely, suppose that $(X,\tau)$ is type-compact and let $\CC$ be a consistent family of $\tau$-closed sets. The intersection $\cap\CC$ can be rewritten as an intersection of basic closed sets. In particular, we may assume that $\CC$ contains only definable sets. Now, by the Marker-Steinhorn Theorem~\cite[Theorem 2.1]{mark_stein_94}, every type over $M=\mathbb{R}$ is definable. Consequently $\CC$ extends to a definable type $p \in S_X(M)$. Let $x$ be a $\tau$-limit of $p$, then $x\in \cap \CC$. So $(X,\tau)$ is compact. 
\end{proof}

\begin{remark}\label{rem:type-comp}
Note that the equivalence between type-compactness and classical topological compactness shown in Proposition~\ref{prop:pre_specialization_compactness} holds in every structure satisfying that all types are definable. For example it remains true in the field of p-adic numbers $(\mathbb{Q}_p,+,\cdot)$, as observed in~\cite[Theorem 8.15]{and-johnson}.   

More specifically, if $\varphi(x,y)$ defines a basis $\BB$ for the topology $\tau$, i.e. $\BB=\{\varphi(M,b) : b\in M^{|y|}\}$, then to have the equivalence between type-compactness and classical compactness it suffices to have that every maximal consistent subfamily of $\{ X\setminus B : B\in \BB\}$ is definable, which occurs in particular whenever every $\varphi$-type (i.e. restrictions of types in $S_{|x|}(M)$ to $\varphi(x,y)$) is definable. Observe that the latter always holds whenever $\varphi(x,y)$ is stable although, as already noted in the proof of~\cite[Proposition 1.2]{pillay87}, every infinite $T_1$ topological space that has a basis defined by a stable formula must be discrete, and consequently not compact.   
\end{remark}




Proposition~\ref{prop:compactnes_transversals} below corresponds to the equivalence between~\eqref{itm:compactness_1}, \eqref{itm:compactness_2.5}, \eqref{itm:compactness_3}, \eqref{itm:compactness_5} and~\eqref{itm:compactness_4} in Theorem~\ref{thm:intro_compactness}. Its proof relies on Proposition~\ref{prop:existence_complete_directed_families_2.1} and Corollaries~\ref{cor:vc_density2} and~\ref{prop:vc_tame_transversals}.

\begin{proposition}\label{prop:compactnes_transversals}
Let $(X,\tau)$ be a definable topological space. The following are equivalent. 
\begin{enumerate}[(1)]
\item \label{itm:compactness_transversals_1} $(X,\tau)$ is filter-compact. 
\item \label{itm:compactness_transversals_1.5} Every definable family of $\tau$-closed sets that extends to a definable type in $S_X(M)$ has nonempty intersection.
\item \label{itm:compactness_transversals_4} Every definable family $\CC$ of $\tau$-closed sets with the $(m,n)$-property, where $m\geq n > \vc^*(\SSS)$, has a finite transversal.
\item \label{itm:compactness_transversals_3} Every definable family $\CC$ of $\tau$-closed sets with the $(m,n)$-property, where $m\geq n >\dim \cup\CC$, has a finite transversal.
\item \label{itm:compactness_transversals_2} $(X,\tau)$ is transversal-compact.
\end{enumerate}
\end{proposition}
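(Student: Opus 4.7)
The plan is to establish the cycle $\eqref{itm:compactness_transversals_1}\Rightarrow \eqref{itm:compactness_transversals_1.5}\Rightarrow \eqref{itm:compactness_transversals_4}\Rightarrow \eqref{itm:compactness_transversals_3}\Rightarrow \eqref{itm:compactness_transversals_2}\Rightarrow \eqref{itm:compactness_transversals_1}$. The bulk of the content sits in the first two implications, which invoke Proposition~\ref{prop:existence_complete_directed_families_2.1} and Corollary~\ref{prop:vc_tame_transversals} respectively; the remaining three are quick consequences of Corollary~\ref{cor:vc_density2} and Lemma~\ref{fact:downward_directed_family_2}.

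For $\eqref{itm:compactness_transversals_1}\Rightarrow\eqref{itm:compactness_transversals_1.5}$, given a definable family $\CC=\{\varphi(M,b):b\in B\}$ of $\tau$-closed sets contained in a definable type $p\in S_X(M)$, I would apply Proposition~\ref{prop:existence_complete_directed_families_2.1} to $\varphi(x,y)$ and $p$ to obtain a formula $\psi(x,z)$ such that $p|_\psi^1$ is downward directed and $p|_\psi^1 \vdash p|_\varphi$. Writing $p|_\psi^1=\{\psi(M,c):c\in C\}$, downward directedness combined with this entailment gives, for each $b\in B$, some $c\in C$ with $\psi(M,c)\subseteq \varphi(M,b)$; since $\varphi(M,b)$ is $\tau$-closed we obtain $cl_\tau(\psi(M,c))\subseteq \varphi(M,b)$. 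The family $\{cl_\tau(\psi(M,c)):c\in C\}$ is then a definable, downward directed family of nonempty $\tau$-closed sets, so by filter-compactness has nonempty intersection, and any point in that intersection lies in every $\varphi(M,b)$ with $b\in B$.

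For $\eqref{itm:compactness_transversals_1.5}\Rightarrow\eqref{itm:compactness_transversals_4}$, suppose $\CC$ has the $(m,n)$-property with $m\geq n>\vc^*(\CC)$. Corollary~\ref{prop:vc_tame_transversals} yields a finite definable covering $\CC=\CC_1\cup\cdots\cup\CC_l$ such that each $\CC_i$ extends to a definable type in $S_k(M)$. Each $\CC_i\subseteq \CC$ consists of $\tau$-closed sets, so by \eqref{itm:compactness_transversals_1.5} we have $\cap \CC_i\neq \emptyset$, and picking one point per piece gives a transversal of $\CC$ of size at most $l$. The implication $\eqref{itm:compactness_transversals_4}\Rightarrow\eqref{itm:compactness_transversals_3}$ is immediate from Corollary~\ref{cor:vc_density2}, which gives $\vc^*(\CC)\leq \dim\cup\CC$. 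For $\eqref{itm:compactness_transversals_3}\Rightarrow\eqref{itm:compactness_transversals_2}$, a consistent family has the $(m,n)$-property for every $m\geq n\geq 1$, so choosing $n=\dim\cup\CC+1$ and applying \eqref{itm:compactness_transversals_3} produces a finite transversal. Finally $\eqref{itm:compactness_transversals_2}\Rightarrow\eqref{itm:compactness_transversals_1}$: a downward directed family of nonempty sets is trivially consistent, hence \eqref{itm:compactness_transversals_2} gives a finite transversal, and Lemma~\ref{fact:downward_directed_family_2} then delivers nonempty intersection.

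The only genuinely delicate step is $\eqref{itm:compactness_transversals_1}\Rightarrow\eqref{itm:compactness_transversals_1.5}$: one must translate a definable type into a concrete downward directed family of definable sets via Proposition~\ref{prop:existence_complete_directed_families_2.1} and then pass to $\tau$-closures while preserving the refinement relation to the original closed family, which is what makes filter-compactness applicable. Everything else amounts to threading the prior results together, with Corollaries~\ref{cor:vc_density2} and~\ref{prop:vc_tame_transversals} bridging the $(m,n)$-property formulations to the definable-type formulation, and Lemma~\ref{fact:downward_directed_family_2} closing the loop back to filter-compactness.
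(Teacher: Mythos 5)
Your proposal is correct and takes essentially the same approach as the paper: Proposition~\ref{prop:existence_complete_directed_families_2.1} together with passing to $\tau$-closures for \eqref{itm:compactness_transversals_1}$\Rightarrow$\eqref{itm:compactness_transversals_1.5}, Corollary~\ref{prop:vc_tame_transversals} for \eqref{itm:compactness_transversals_1.5}$\Rightarrow$\eqref{itm:compactness_transversals_4}, Corollary~\ref{cor:vc_density2} and the fact that consistent families have all $(m,n)$-properties for the next two steps, and Lemma~\ref{fact:downward_directed_family_2} to close back to filter-compactness (the paper merely organizes this last part as showing that \eqref{itm:compactness_transversals_4}, \eqref{itm:compactness_transversals_3} and \eqref{itm:compactness_transversals_2} each imply \eqref{itm:compactness_transversals_1}, rather than a single cycle). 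The only cosmetic tidying needed is that in \eqref{itm:compactness_transversals_1}$\Rightarrow$\eqref{itm:compactness_transversals_1.5} you should take the $\tau$-closures of the sets $\psi(M,c)\cap X$ (as the paper does with $\psi(X,c)$), so that the downward directed family consists of subsets of $X$ as the definition of filter-compactness requires.
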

\begin{proof}
Note that, if a downward directed family of sets has a finite transversal, then, by Lemma~\ref{fact:downward_directed_family_2}, it has nonempty intersection. Hence \eqref{itm:compactness_transversals_4}, \eqref{itm:compactness_transversals_3} and~\eqref{itm:compactness_transversals_2} each imply~\eqref{itm:compactness_transversals_1}. We prove~\eqref{itm:compactness_transversals_1}$\Rightarrow$\eqref{itm:compactness_transversals_1.5} and~\eqref{itm:compactness_transversals_1.5}$\Rightarrow$\eqref{itm:compactness_transversals_4}. Observe that implication~\eqref{itm:compactness_transversals_4}$\Rightarrow$\eqref{itm:compactness_transversals_3} follows from Corollary~\ref{cor:vc_density2}, and implication~\eqref{itm:compactness_transversals_3}$\Rightarrow$\eqref{itm:compactness_transversals_2} is trivial, completing the proof.  

\textbf{Proof of~\eqref{itm:compactness_transversals_1}$\Rightarrow$\eqref{itm:compactness_transversals_1.5}.}

Suppose that $(X,\tau)$ is filter-compact and let $\CC=\{\varphi(M,b) : b\in B\}$ be a definable family of $\tau$-closed sets that extends to a definable type $p(x) \in S_X(M)$. Let $\psi(x,z)$ be as given by Proposition~\ref{prop:existence_complete_directed_families_2.1} for $\varphi(x,y)$ and $p(x)$. Let $\FF=\{ \psi(X,c) : \psi(x,c)\in p(x),\, c\in M^{|z|}\}$. By Proposition~\ref{prop:existence_complete_directed_families_2.1}, $\FF$ is a definable downward directed family of subsets of $X$ which refines $\CC$. Let $\mathcal{D}=\{ cl_\tau(F) : F \in \FF\}$. Clearly $\mathcal{D}$ is a definable downward directed family of $\tau$-closed sets, so by filter-compactness there exists $a\in \cap\DD$. Moreover observe that, since the sets in $\CC$ are closed, then $\DD$ is still a refinement of $\CC$, implying that $\cap\DD \subseteq \cap \CC$, and so $a\in \cap \CC$.

\textbf{Proof of~\eqref{itm:compactness_transversals_1.5}$\Rightarrow$\eqref{itm:compactness_transversals_4}.}

Let $\CC$ be a definable family of $\tau$-closed subsets of $X$ with the $(m,n)$-property, where $m\geq n > \vc^*(\CC)$. By Corollary~\ref{prop:vc_tame_transversals} there exists a finite covering $\{\CC_1, \ldots, \CC_k\}$ of $\CC$ by definable subfamilies, each of which extends to a definable type in $S_X(M)$. If property~\eqref{itm:compactness_transversals_1.5} holds, then, for each $i\leq k$, there exists some $a_i \in \cap \CC_i$ in $X$. The family $\{a_1,\ldots, a_k\}$ is clearly a transversal of $\CC$.
\end{proof}

\begin{remark}
We remark that, although omitted from the proof above, the implication \eqref{itm:compactness_transversals_1.5}$\Rightarrow$\eqref{itm:compactness_transversals_1} in Proposition~\ref{prop:compactnes_transversals} (i.e. by Proposition~\ref{prop:pre_specialization_compactness} the implication \emph{type-compactness $\Rightarrow$ filter-compactness}) can be shown to follow easily from Proposition~\ref{lemma:existence_complete_directed_families}. In fact we claim that this implication, as well as \eqref{itm:compactness_transversals_1.5}$\Rightarrow$\eqref{itm:compactness_transversals_2}, hold in a more general dp-minimal setting by~\cite[Theorem 5]{simon_star_14} (see the discussion above Fact~\ref{thm:forking_definable_types}). Additionally, the equivalence $\eqref{itm:compactness_transversals_4}\Leftrightarrow\eqref{itm:compactness_transversals_2}$ holds in all in NIP structures by recent work of Kaplan~\cite[Corollary 4.9]{kaplan22}.
\end{remark}

\begin{remark}\label{remark:peterzil_pillay}
It was shown in~\cite[Theorem 2.1]{pet_stein_99} that a definable set with the o-minimal Euclidean topology is curve-compact if and only if it is closed and bounded. In \cite[Theorem 2.1]{pet_pillay_07} Peterzil and Pillay extracted from~\cite{dolich04} the following. Suppose that our o-minimal structure $\MM$ has definable choice (e.g. expands an ordered group). Let $\Mon=(U,\ldots)$ be a monster model and $\varphi(x,b)$ be a formula in $\LL(U)$ such that $\varphi(U, b)$ is closed and bounded (in the Euclidean topology in $U^{|x|}$). If the family $\{ \varphi(U,b') : \tp(b'/M)=\tp(b/M)\}$ is consistent, then $\varphi(U,b)$ has a point in $M^{|x|}$. Using a straightforward model-theoretic compactness argument they derive from this that every closed and bounded Euclidean space is transversal-compact~\cite[Corollary 2.2 $(i)$]{pet_pillay_07}.

Let $(X,\tau)$ be a definable topological space (in $\MM$), whose definition in $\Mon$ we denote by $(X(\Mon), \tau(\Mon))$. The property that every formula $\varphi(x,b)\in \LL(U)$, satisfying that $\varphi(U,b)$ is $\tau(\Mon)$-closed and the family $\{ \varphi(U,b') : \tp(b'/M)=\tp(b/M)\}$ is consistent, satisfies that $\varphi(M,b)\neq \emptyset$, is labelled Dolich's property in~\cite{fornasiero}. As mentioned in the previous paragraph, this property implies transversal-compactness (without any assumption on $\MM$), and furthermore one may show, using recent work of Kaplan~\cite[Theorem 1.5]{kaplan22}, that the converse implication (transversal-compactness$\Rightarrow$Dolich's property) holds in all NIP structures. 

Theorem~\ref{thm:intro_compactness} completes the characterization of closed and bounded definable sets with the Euclidean topology. Furthermore, it generalizes Peterzil's and Pillay's \cite{pet_pillay_07} aforementioned result in three ways. First, we drop the assumption of having definable choice in $\MM$. Second, we weaken the consistency assumption to having an appropriate $(n,m)$-property (in their work they actually observe that it suffices to have $k$-consistency for some $k$ in terms of $|x|$ and $|b|$). Third, we establish, by means of the equivalence with transversal-compactness mentioned in the paragraph above, the relationship between Dolich's property and the other compactness notions in the full generality of any o-minimal definable topological space. 
\end{remark}

We now prove the connection within o-minimality between filter-compactness and curve-compactness stated in Theorem~\ref{thm:intro_compactness}. That is, that filter-compactness implies curve-compactness, and that both notions are equivalent when the topology is Hausdorff or when the underlying o-minimal structure has definable choice. This is Proposition~\ref{prop:curve_compact_iff_directed_compact}. We follow the proposition with an example of a non-Hausdorff topological space definable in the dense linear order without endpoints $(M,<)$ that is curve-compact but not filter-compact.  

The next lemma allows us to apply definable choice in certain instances even when the underlying structure $\MM$ may not have the property.

\begin{lemma}[Definable choice in compact Hausdorff spaces]\label{lemma:choice_compact_spaces}
Let $C$ be a definable nonempty $\tau$-closed set in a curve-compact Hausdorff definable topological space $(X,\tau)$. Let $A\subseteq M$ be such that $\tau$ and $C$ are $A$-definable. Then there exists a point $x\in C \cap \dcl(A)$, where $\dcl(A)$ denotes the set of finite tuples of elements in the definable closure of $A$. 

Consequently, for every $A$-definable family $\{\varphi(M,b) : b\in B\}$ of nonempty subsets of $X$, there exists an $A$-definable choice function $h:B\rightarrow X$ such that $h(b)\in cl_\tau(\varphi(M,b))$ for every $b\in B$. 

\end{lemma}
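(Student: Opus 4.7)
My plan is to reduce the first statement to curve-compactness by producing an $A$-definable curve inside $C$ and extracting its unique (by Hausdorffness) $\tau$-limit. First I would apply o-minimal cell decomposition with parameters from $A$ to partition $C$ into finitely many $A$-definable cells, canonically ordered lexicographically by cell indices, and let $D$ be the first one. If $\dim D = 0$ then $D = \{x\}$ with $x \in C \cap \dcl(A)$ and we are done. Otherwise $\dim D \geq 1$, and writing $D$ in its canonical graph-of-continuous-functions form with $A$-definable defining data, I would let $\gamma : (\alpha, \beta) \to D \subseteq C$ be a canonical $A$-definable diagonal parametrization of a curve in $D$ (with $\alpha, \beta \in \dcl(A) \cup \{-\infty, +\infty\}$).

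By curve-compactness, $\taulim_{t \to \alpha^+} \gamma(t)$ is nonempty, and by the Hausdorff hypothesis it is a singleton $\{x\}$; being the unique $\tau$-limit of the $A$-definable curve $\gamma$, the point $x$ belongs to $\dcl(A)$. Since $C$ is $\tau$-closed and $\gamma$ maps into $C$, we obtain $x \in cl_\tau(\gamma((\alpha,\beta))) \subseteq cl_\tau(C) = C$, giving the required point in $C \cap \dcl(A)$.

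For the second statement, the entire procedure is uniform in parameters. I would apply the first statement fiberwise to $C_b := cl_\tau(\varphi(M,b))$ for $b \in B$, using uniform o-minimal cell decomposition (\cite[Ch.~3, Prop.~3.5]{dries98}) to partition $B$ into finitely many $A$-definable pieces on each of which the canonical cell selection, its dimension, and the diagonal curve parametrization are given by $\LL(A)$-formulas in $b$. The resulting $\tau$-limit point then yields an $A$-definable choice function $h : B \to X$ with $h(b) \in C_b = cl_\tau(\varphi(M,b))$ for every $b \in B$.

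The main subtlety is ensuring this uniformity in $b$: because the dimension and shape of the canonically selected cell inside $C_b$ may vary with $b$, a preliminary $\LL(A)$-partition of $B$ via uniform cell decomposition is needed so that on each piece the curve construction, and hence the final limit point, depends $A$-definably on $b$. Everything else is routine given curve-compactness and the Hausdorff assumption, which together force the extracted limit to be both unique and contained in the $\tau$-closed set $C_b$.
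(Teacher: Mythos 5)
There is a genuine gap at the very first step: the ``canonical $A$-definable diagonal parametrization'' of a positive-dimensional cell does not exist in a general o-minimal structure. Producing a curve inside a given $A$-definable set, definably over the \emph{same} parameters, is itself a choice principle, and the whole point of this lemma is to obtain such a principle in structures that may lack definable choice. Concretely, take $\MM=(M,<)$ a dense linear order without endpoints, $A=\emptyset$, and the $\emptyset$-definable open cell $D=\{\al x,y\ar : x<y\}$: by quantifier elimination every $\emptyset$-definable partial map $M\rightarrow M$ is piecewise the identity with $\emptyset$-definable pieces (no constants are available since $\dcl(\emptyset)=\emptyset$), so the only candidate curve is $t\mapsto \al t,t\ar$, which misses $D$ entirely. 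Thus for cells of dimension $\geq 2$ your $\gamma$ need not exist (dimension $1$ is fine, since a one-dimensional cell is the graph of an $A$-definable map over an interval with endpoints in $\dcl(A)\cup\{\pm\infty\}$, and your remaining steps --- uniqueness of the limit by Hausdorffness, membership in $\dcl(A)$ by canonicity, membership in $C$ by $\tau$-closedness --- are correct). Since the uniform statement is obtained by running the same construction in the parameter $b$, it inherits the same gap.

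The paper avoids this by never choosing a single curve through a high-dimensional set. Instead it does a backwards induction on the number $n$ of parameters of an $A$-definable partial function $f:D\subseteq M^n\rightarrow C$ (starting with the identity on $C$): for each $x\in\pi(D)$ one looks at the fiber curve $t\mapsto f(x,t)$ near $s_x=\sup D_x$, and either evaluates at $s_x$ when $s_x\in D_x$ or takes $\taulim_{t\rightarrow s_x^-}f(x,t)$, which exists by curve-compactness, is unique by Hausdorffness, and lies in $C$ by closedness. This eliminates one coordinate at a time, $A$-definably, until $n=0$ yields a point of $C\cap\dcl(A)$; the family version then follows by compactness (or by carrying the parameter $b$ through the induction). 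If you want to salvage your approach, you would need to replace the diagonal parametrization by some such coordinate-by-coordinate limit procedure; as written, the construction only goes through under an additional hypothesis like definable choice (e.g.\ over an ordered group or field), which defeats the purpose of the lemma.
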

\begin{proof}
We prove the first paragraph of the lemma. The uniform result is derived in the usual way by the use of first-order logic compactness.

For this proof we adopt the convention of the one point Euclidean space $M^0=\{\bm{0}\}$. In particular, any projection $M^{k}\rightarrow M^0$ is simply the constant function $\bm{0}$, and any relation $E\subseteq M^0\times M^{k}$ is definable if and only if its projection to $M^{k}$ is.  

Let $C$, $(X,\tau)$ and $A$ be as in the lemma, with $X\subseteq M^m$. Let $n\leq m$ be such that there exists an $A$-definable function $f:D\subseteq M^n\rightarrow C$, for $D$ a nonempty set. If $n$ can be chosen to be zero, then the lemma follows. We prove that this is the case by backwards induction on $n$. 

Note that $n$ can always be chosen equal to $m$, by letting $f$ be the identity on $C$. Consider a positive $n\leq m$. For every $x\in M^{n-1}$, let $D_x$ denote the fiber $\{t\in M : \al x,t\ar \in D\}$. For each $x\in\pi(D)$, let $s_x=\sup D_x$, and consider the $A$-definable set $F=\{x\in \pi(D): s_x\in D_x\}$. 

If $F\neq \emptyset$, then let $g$ be the map $x\mapsto f(s_x):F\rightarrow C$. 
If $F=\emptyset$, then let $g$ be the map $x\mapsto \taulim_{t\rightarrow s^-_x} f(x,t): \pi(D)\rightarrow C$ which, by curve-compactness and Hausdorffness, is well defined. In both cases $g$ is an $A$-definable nonempty partial function $M^{n-1}\rightarrow C$. 
\end{proof}

\begin{remark}
Let $A\subseteq M$ and $\CC$ be an $A$-definable family of nonempty $\tau$-closed sets in a curve-compact Hausdorff $A$-definable topological space $(X,\tau)$. Lemma~\ref{lemma:choice_compact_spaces} implies that, if $\CC$ has a finite transversal, then it also has one of the same size in $\dcl(A)$. To prove this it suffices to note that, for every $k\geq 1$, the set of $k$-tuples of points corresponding to a transversal of $\CC$ is $A$-definable and closed in the product topology, which can easily be shown to be $A$-definable and curve-compact. 

It follows that, whenever $\MM$ has definable choice or $\tau$ is Hausdorff, the finite transversals in Theorem~\ref{thm:intro_compactness} (statements~\eqref{itm:compactness_3}, \eqref{itm:compactness_5} and~\eqref{itm:compactness_4}) can always be assumed to be definable over the same parameters as the family of closed sets $\CC$ and topology $\tau$.
\end{remark}

\begin{proposition}\label{prop:curve_compact_iff_directed_compact}
Let $(X,\tau)$ be a definable topological space. If $(X,\tau)$ is filter-compact, then it is curve-compact. 

Suppose that either $\tau$ is Hausdorff or $\MM$ has definable choice. Then $(X,\tau)$ is filter-compact if and only if it is curve-compact. 
\end{proposition}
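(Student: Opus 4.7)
The plan is to prove the two implications separately, using Proposition~\ref{lemma:existence_complete_directed_families} and Lemma~\ref{lemma:choice_compact_spaces} as the main tools for the converse.

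For the first implication, filter-compact $\Rightarrow$ curve-compact, let $\gamma \colon (a,b) \to X$ be a definable curve; I will treat convergence as $t \to a^+$, the case $t \to b^-$ being symmetric. I apply filter-compactness to the definable downward directed family $\{cl_\tau(\gamma((a,t))) : a < t < b\}$ of nonempty $\tau$-closed sets to obtain a common point $x$, and then argue that $\gamma$ must $\tau$-converge to $x$. Indeed, if some basic $\tau$-open neighborhood $U$ of $x$ (drawn from a fixed definable basis) were not eventually entered, then the $1$-variable definable set $\gamma^{-1}(X \setminus U) \subseteq (a,b)$ would meet every initial segment and, by o-minimality, contain some interval $(a, t_1)$; but then $x \in cl_\tau(\gamma((a, t_1))) \subseteq X \setminus U$, a contradiction.

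For the converse under either extra hypothesis, let $\mathcal{F}$ be a definable downward directed family of nonempty $\tau$-closed sets. By Proposition~\ref{lemma:existence_complete_directed_families}, I will pass to a definable downward directed refinement $\mathcal{G} = \{\psi(X, c) : c \in C\}$ of $\mathcal{F}$ by cells that generates a definable type $p \in S_X(M)$; it then suffices to exhibit a $\tau$-limit of $p$, since such a point lies in every $\tau$-closed set of $p$ and hence in every $F \in \mathcal{F}$. Under the Hausdorffness assumption I invoke Lemma~\ref{lemma:choice_compact_spaces} to obtain a definable selector $h \colon C \to X$ with $h(c) \in cl_\tau(\psi(X, c))$; under definable choice in $\MM$ the same is immediate.

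The next step will be to construct a definable curve $\delta \colon (\alpha, \beta) \to C$ that is cofinal under the definable downward directed preorder $c \preceq c' \iff \psi(X, c) \subseteq \psi(X, c')$, meaning that for every $c_0 \in C$ there is $s_0$ with $\psi(X, \delta(s)) \subseteq \psi(X, c_0)$ for all $s \in (\alpha, s_0)$. Given such $\delta$, the composition $\gamma := h \circ \delta$ is a definable curve in $X$; curve-compactness supplies a $\tau$-limit $x$, and for any closed $F \in \mathcal{F}$, choosing $c_F \in C$ with $\psi(X, c_F) \subseteq F$, cofinality forces $\gamma(s) \in cl_\tau(\psi(X, \delta(s))) \subseteq cl_\tau(\psi(X, c_F)) \subseteq F$ for $s$ close enough to $\alpha$, whence $x \in F$ and so $x \in \bigcap \mathcal{F}$.

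The hard part will be constructing the cofinal curve $\delta$ inside $C$. My plan is induction on $\dim C$: the base case $\dim C = 0$ is immediate since a finite family under $\preceq$ has a minimum $c^{*}$ and one may take $\delta$ constant equal to $c^{*}$; for the inductive step I will apply o-minimal cell decomposition to $C$ and use the definability and downward directedness of $\preceq$ to isolate a one-dimensional subcell along which $\preceq$ descends monotonically (via the o-minimal monotonicity theorem), then let $\delta$ traverse this subcell toward the appropriate endpoint. Propagating downward directedness through the cell decomposition of $C$ is the delicate combinatorial point, and borrows ideas from the proof of Proposition~\ref{lemma:existence_complete_directed_families}.
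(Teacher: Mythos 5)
Your first implication is correct and is essentially the paper's Lemma~\ref{lemma:curve_compactness_implies_compactness} with the o-minimality step spelled out. The problem is the converse: your argument hinges on the claim that the definable downward directed family $\GG=\{\psi(X,c):c\in C\}$ admits a definable curve $\delta$ that is \emph{cofinal} for the refinement preorder, and this claim is false in general o-minimal structures, so the proposed induction on $\dim C$ cannot be carried out. Concretely, work in $\MM=(M,<)$, a dense linear order, fix $c<d$ in $M$, and consider the definable downward directed family of open cells $\SSS=\{(c,a)\times(d,b) : a>c,\ b>d\}$ (which generates a definable type, so it is exactly the kind of family your refinement step produces). A curve $\delta(s)=(a(s),b(s))$ cofinal in $\SSS$ would have to satisfy, say as $s\to\alpha^+$: eventually $a(s)\leq a_0$ for every $a_0>c$ and eventually $b(s)\leq b_0$ for every $b_0>d$, while $a(s)>c$ and $b(s)>d$ throughout. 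By quantifier elimination in $(M,<)$ (the same fact the paper uses in Example~\ref{example:space-compact}), each of $a,b$ is constant or the identity on an interval $(\alpha,s_1)$; a constant coordinate violates cofinality in that coordinate (by density), so both must be the identity, forcing $a(s)=b(s)=s$ there. But then $s>d$ always, while cofinality demands $s\leq a_0$ eventually for some $a_0$ with $c<a_0<d$ --- a contradiction (the analysis at the other endpoint is identical). So no definable cofinal curve exists, and monotonicity/cell decomposition cannot rescue this: the obstruction is not dimension but the impossibility of synchronizing the ``directions'' of the directed family along a single one-parameter curve.

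This is precisely why the paper's proof takes a different shape. It first isolates the \emph{nested} (chain) case, Lemma~\ref{lemma:case_nested_family}, proved by induction on the number of parameters, where a curve extracted from the index set composed with the selector of Lemma~\ref{lemma:choice_compact_spaces} does work --- your $h\circ\delta$ idea is essentially correct in that restricted situation. For a general downward directed family, after refining to a family $\CC$ of cells generating a type (Proposition~\ref{lemma:existence_complete_directed_families}), the paper instead argues by induction on the dimension $n$ of the sets: writing each cell as $(f_C,g_C)$, it forms the auxiliary $\tau$-closed sets $D(C)=\bigcap\{cl_\tau(f_C,g_{C'}):C'\in\CC\}$, shows each $D(C)\neq\emptyset$ by taking $\tau$-limits of the vertical curves $t\mapsto\langle x,t\rangle$ as $t\to f_C(x)^+$ (chosen definably via Hausdorffness or definable choice) and applying the inductive hypothesis to the resulting family on a set of dimension $\leq n-1$, and finally shows $\{D(C):C\in\CC\}$ is nested, so that Lemma~\ref{lemma:case_nested_family} applies. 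In short: curves are used only against one coordinate at a time and against chains, never as cofinal objects for the whole directed family; supplying an argument of that kind is the missing content in your proposal.
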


We prove the left to right direction through a short lemma. 
\begin{lemma}\label{lemma:curve_compactness_implies_compactness}
Let $(X,\tau)$ be a filter-compact definable topological space. Then $(X,\tau)$ is curve-compact. 
\end{lemma}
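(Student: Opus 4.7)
The plan is to realize the tails of the curve as a definable, downward directed family of $\tau$-closed sets, apply filter-compactness to get a candidate limit, and then use o-minimality of $M$ (applied to the preimage under $\gamma$ of a basic open neighborhood) to upgrade the closure condition into genuine convergence.

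More concretely, let $\gamma:(a,b)\to X$ be a definable curve and, using a fixed definable basis $\BB$ for $\tau$, observe that $\tau$-closure is a uniformly definable operation on definable sets. Hence the tail closures $C_s := cl_\tau(\gamma((a,s)))$, for $s\in(a,b)$, form a definable family of nonempty $\tau$-closed sets, and it is downward directed because $s\leq s'$ implies $C_s\subseteq C_{s'}$. I would then invoke filter-compactness to pick a point $x\in\bigcap_{s\in(a,b)} C_s$, and claim that $x\in \taulim_{t\to a^+}\gamma(t)$. Symmetry gives convergence as $t\to b^-$, so $\gamma$ is $\tau$-completable.

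To verify convergence at $a^+$ it suffices to check that for every $B\in\BB$ with $x\in B$ there is $t_B\in(a,b)$ with $\gamma((a,t_B))\subseteq B$. The set $D_B:=\{t\in(a,b):\gamma(t)\notin B\}$ is definable and hence, by o-minimality of $\MM$, a finite union of points and intervals. If $\inf D_B=a$, then $D_B$ must in fact contain some initial interval $(a,c)$; but then $\gamma((a,c))\cap B=\emptyset$, and since $B$ is $\tau$-open and $x\in B$, this would contradict $x\in C_c=cl_\tau(\gamma((a,c)))$. Therefore $\inf D_B>a$ and any $t_B$ below that infimum works.

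I do not expect any serious obstacle. The one delicate point is making sure the family $\{C_s\}_{s\in(a,b)}$ is genuinely definable, which rests on the fact that in a definable topological space the closure of a uniformly definable family is again uniformly definable (using the definable basis $\BB$). The key, and essentially the only place where o-minimality enters, is the step converting ``$D_B$ accumulates to $a$'' into ``$D_B$ contains an interval $(a,c)$'', a direct application of the cell-decomposition of definable subsets of $M$.
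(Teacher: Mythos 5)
Your proposal is correct and follows essentially the same route as the paper: form the nested definable family of tail closures $\{cl_\tau\,\gamma[(a,t)] : a<t<b\}$, extract a point of the intersection by filter-compactness, and use o-minimality of the fibers $\{t:\gamma(t)\notin B\}$ to turn membership in all tail closures into $\tau$-convergence. The paper leaves the last o-minimality step implicit; your spelled-out argument is exactly the intended one.
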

\begin{proof}
Let $\gamma:(a,b)\rightarrow X$ be a definable curve in $X$. Consider the definable family of $\tau$-closed nested sets $\CC_\gamma=\{ cl_\tau \gamma[(a,t)] : a<t<b\}$. By filter-compactness, there exists $x\in\cap\CC_\gamma$. By o-minimality, observe that $\gamma$ satisfies that it $\tau$-converges to $x$ as $t\rightarrow a$. Similarly one shows that $\gamma$ also $\tau$-converges as $t\rightarrow b$.    
\end{proof}

We now prove a simpler case of the left to right implication in Proposition~\ref{prop:curve_compact_iff_directed_compact}. We do so by implicitly introducing a weakening of filter-compactness corresponding to the property that every definable family of nonempty closed sets that is nested has nonempty intersection (say chain-compactness). We show that, when $\MM$ has definable choice or the underlying topology is Hausdorff, curve-compactness implies chain-compactness (the reverse implication always holds within o-minimality by the proof of Lemma~\ref{lemma:curve_compactness_implies_compactness}). On the other hand, Example~\ref{example:space-compact} describes a (non-Hausdorff) definable topological space in $(M,<)$ that is curve-compact but not chain-compact. It is unclear whether chain-compactness is equivalent to definable compactness (Definition~\ref{dfn:df-compactness}) in the general setting of o-minimal definable topological spaces.  

\begin{lemma}\label{lemma:case_nested_family}
Let $(X,\tau)$ be a definable topological space. Suppose that either $\tau$ is Hausdorff or $\MM$ has definable choice. 
Let $\CC$ be a nested definable family of nonempty $\tau$-closed subsets of $X$. If $(X,\tau)$ is curve-compact, then $\cap\CC\neq \emptyset$. 
\end{lemma}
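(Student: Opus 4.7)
The approach is to build a definable curve in $X$ whose $\tau$-limit witnesses $\cap\CC\neq\emptyset$. Write $\CC=\{C_b : b\in B\}$ for a definable parametrization. Nestedness furnishes a definable linear preorder $\preceq$ on $B$: declare $b\preceq b'$ if and only if $C_b\subseteq C_{b'}$. If $(B,\preceq)$ possesses a $\preceq$-minimum $b^{*}$, then $\cap\CC=C_{b^{*}}\neq\emptyset$ and we are done, so assume otherwise.

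Either because $\MM$ has definable choice, or, in the Hausdorff case, by Lemma~\ref{lemma:choice_compact_spaces}, there is a definable map $h:B\to X$ with $h(b)\in C_b$ for each $b$. The key step is then to produce a definable curve $\sigma:(0,1)\to B$ that is \emph{cofinal downward} in $(B,\preceq)$, in the sense that for each $b\in B$ there exists $t_{b}>0$ with $\sigma(t)\preceq b$ for every $t\in(0,t_{b})$. Granting such a $\sigma$, set $\gamma=h\circ\sigma:(0,1)\to X$; curve-compactness yields a $\tau$-limit $x\in X$ of $\gamma$ as $t\to 0^{+}$. For each $b\in B$ the tail $\gamma((0,t_{b}))$ lies in $C_{b}$ because $h(\sigma(t))\in C_{\sigma(t)}\subseteq C_{b}$, so by $\tau$-closedness of $C_{b}$ we obtain $x\in C_{b}$; as $b$ was arbitrary, $x\in\cap\CC$.

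The substantive obstacle is producing $\sigma$. Apply o-minimal cell decomposition to $B$: by a pigeonhole argument using the linearity of $\preceq$ (if no cell of the decomposition were cofinal downward in $B$, the $\preceq$-minimum of finitely many witnesses to this failure would itself be a minimum of $(B,\preceq)$, contradicting our hypothesis), at least one cell must be cofinal downward, so one may replace $B$ with that cell and induct on $\dim B$. The case $\dim B=0$ is vacuous since a finite linear preorder always has a minimum; for positive dimension, further cell decomposition and projection arguments allow one to descend to a $1$-dimensional cofinal downward sublocus of $B$, parametrizable by an open interval in $M$, on which o-minimality forces the restriction of $\preceq$ to agree with or reverse the parametrization on some subinterval adjacent to an endpoint, yielding $\sigma$. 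I expect this curve-extraction step to be the main delicate point; the remainder of the argument is a routine application of closedness and passage to $\tau$-limits.
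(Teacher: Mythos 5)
Your overall strategy (extract a definable curve that is downward cofinal for the inclusion preorder on the parameter set, push it into $X$ with a definable choice function, and invoke curve-compactness) is sound in outline, and in fact it coincides with what the paper does when $B\subseteq M$: there the cofinal curve is just the identity on a one-sided interval adjacent to the definably chosen point $a=\sup\{\inf(-\infty,b)_{\lleq}:b\in B\}$, composed with the choice function from Lemma~\ref{lemma:choice_compact_spaces}. (Note, as a minor point, that this interval need not be adjacent to an endpoint of $B$, and the preorder need not agree with or reverse the parametrization; what one actually proves is that the one-sided family of intervals at $a$ refines the family of initial segments.) The genuine gap is the step you yourself flag: producing a definable downward-cofinal curve $\sigma:(0,1)\rightarrow B$ when $\dim B>1$. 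The pigeonhole reduction to a downward-cofinal cell is fine, but ``further cell decomposition and projection arguments allow one to descend to a $1$-dimensional cofinal downward sublocus'' is an assertion, not an argument, and it is exactly the nontrivial content. A definable total preorder on a cell of dimension $\geq 2$ mixes the fibers of any coordinate projection, so applying the one-variable analysis fiberwise does not yield a set that is cofinal in the whole preorder; and the natural ways to extract a cofinal curve (choosing, definably in a parameter $t$, a point of $B$ in a suitable fiber, or passing through a lexicographic embedding of the order) require definable choice on subsets of $M^n$. That is not available in the Hausdorff branch of the lemma: Lemma~\ref{lemma:choice_compact_spaces} only provides choice functions valued in $X$ (via $\tau$-closures), not in the parameter set $B$, so your construction of $\sigma$ cannot be completed as sketched when $\MM$ lacks definable choice.

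The paper sidesteps this entirely by inducting on the dimension of the parameter set rather than hunting for a cofinal curve: writing $\CC=\{\varphi(M,x,t):\al x,t\ar\in B\}$, it replaces $\CC$ by the family of fiberwise intersections $C(x)=\bigcap_{t\in B_x}\varphi(M,x,t)$, which by the case $n=1$ consists of nonempty $\tau$-closed sets, is again nested, has the same total intersection, and is parametrized by a subset of $M^{n-1}$; curve-compactness is only ever used in the base case, where the cofinal curve exists for free as an interval inside $B\subseteq M$. If you want to salvage your approach you would need to prove, without definable choice, that every definable total preorder on a definable subset of $M^n$ with no minimum admits a definable downward-cofinal subset of dimension at most one; as it stands this is an unproved lemma of comparable difficulty to the statement you are trying to establish.
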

\begin{proof}
Let $(X,\tau)$ and $\CC=\{\varphi(M,b) : b\in B\}$, with $B\subseteq M^n$, be as in the lemma. We assume that $(X,\tau)$ is curve-compact and show that $\cap \CC\neq \emptyset$. We proceed by induction on $n$. 
 
\textbf{Case $n=1$.}

Consider the definable total preorder $\preceq$ in $B$ given by $b\preceq c$ if and only if $\varphi(M,b) \subseteq \varphi(M,c)$. If $B$ has a minimum $\bm{b}$ with respect to $\preceq$, then $\varphi(M, \bm{b})\subseteq \varphi(M,c)$ for every $c\in B$, and the result follows. 
We suppose that $(B,\preceq)$ does not have a minimum and consider the definable nested family of (necessarily infinite) sets $\{ (-\infty, b)_{\preceq} : b\in B\}$, where $(-\infty, b)_\preceq=\{ c\in B : c \preceq b\}$ for every $b\in B$. 
Now let $a=\sup \{ \inf (-\infty, b)_{\preceq} : b\in B\}$, where the infimum and supremum are taken in $\exR$ with respect to the order $<$ in $M$. We show that one of the families $\{(a,t): t>a\}$ or $\{(t,a): t<a\}$ is a refinement of  $\{ (-\infty, b)_{\preceq} : b\in B\}$. We prove the case where $a\in M$, being the case where $a\in \{-\infty, +\infty\}$ similar but more straightforward.

Towards a contradiction suppose that $\{ (-\infty, b)_{\preceq} : b\in B\}$ does not have a refinement as described. Then, by o-minimality, there exists some $b_1 \in B$ and $t_1>a$ such that $(-\infty, b_1)_{\preceq} \cap (a,t_1) = \emptyset$, and similarly there exists $b_2 \in B$ and $t_2<a$ such that $(-\infty, b_2)_{\preceq} \cap (t_2,a) = \emptyset$. Additionally, by definition of $a$, there exists $b_3\in B$ such that $\inf (-\infty, b_3)_{\preceq} > t_2$. Finally let $b_4\in B$ be such that $a\notin (-\infty, b_4)_{\preceq}$. Now let $i \leq 4$ be such that $b_i$ is the minimum with respect to $\preceq$ in $\{b_1, b_2, b_3, b_4\}$. Then observe that $\inf (-\infty, b_i)_{\preceq} \geq t_1$, contradicting the definition of $a$.

Hence onwards we assume that the family of intervals $\{(a,t): t>a\}$ is a refinement of $\{ (-\infty, b)_{\preceq} : b\in B\}$, being the case where the refinement is given by the family $\{(t,a): t<a\}$ analogous. This means that, for every $b\in B$, there exists an element $t(b)>a$ such that $\varphi(M,c)\subset \varphi(M,b)$ for every $c\in (a,t(b))$.

Since either $\MM$ has definable choice or $\tau$ is Hausdorff there exists, by Lemma~\ref{lemma:choice_compact_spaces}, a definable function $f:B\rightarrow \cup \CC$ satisfying that $f(b)\in \varphi(M,b)$ for every $b\in B$. By the above paragraph it follows that, for every $b\in B$, if $c\in (a, t(b))$, then $f(c) \in \varphi(M,b)$. Let $d>a$ be such that $(a,d)\subseteq B$ and $\gamma$ be the restriction of $f$ to $(a,d)$. We derive that, for every $C\in \CC$, it holds that $\taulim_{t\rightarrow a} \gamma(t) \subseteq C$. Since $(X,\tau)$ is curve-compact we conclude that $\cap \CC\neq \emptyset$.

\textbf{Case $n>1$.}

For every $x\in\pi(B)$, let $\CC_x$ denote the family $\{\varphi(M,x,t) : t\in B_x\}$, where $B_x=\{t\in M : \al x, t \ar \in B$\}, and set $C(x):=\cap \CC_x$. By the case $n=1$ the definable family of $\tau$-closed sets $\DD=\{C(x) : x\in\pi(B)\}$ does not contain the empty set. Clearly $\cap \DD=\cap \CC$. We observe that the family $\DD$ is nested and the result follows from the induction hypothesis. 

Given $x ,y\in \pi(B)$, if, for every $C\in \CC_x$, there is $C'\in\CC_y$ with $C'\subseteq C$, then $\cap \CC_y  \subseteq \cap \CC_x$. Otherwise, there is $C\in\CC_x$ such that, for every $C'\in\CC_y$, it holds that $C\subseteq C'$, in which case $\cap \CC_x\subseteq C \subseteq \cap \CC_y$.     
\end{proof}

We may now prove Proposition~\ref{prop:curve_compact_iff_directed_compact}.

\begin{proof}[Proof of Proposition~\ref{prop:curve_compact_iff_directed_compact}]
By Lemma~\ref{lemma:curve_compactness_implies_compactness} we must only prove the right to left implication in the second paragraph.
Let $(X,\tau)$, with $X\subseteq M^m$, be a curve-compact definable topological space, where either $\tau$ is Hausdorff or otherwise $\MM$ has definable choice. Let $\CC$ be a definable downward directed family of nonempty subsets of $X$, not necessarily $\tau$-closed. We show that $\bigcap\{cl_\tau(C) : C\in\CC\}\neq \emptyset$. 

We proceed by induction on $n=\min\{\dim C : C\in\CC\}$. Applying Proposition~\ref{lemma:existence_complete_directed_families}, after passing to a refinement of $\CC$ if necessary, we may assume that $\CC$ is a downward directed family of cells that generates a type in $S_m(M)$.
If $n=0$, then there exists a finite set in $\CC$ and so (see Lemma~\ref{fact:downward_directed_family_2}) $\cap\CC\neq \emptyset$. Hence onwards we assume that $n>0$. We begin by proving the case $n=m$. Hence suppose that every $C\in \CC$ is an open cell $C=(f_C,g_C)$, for functions $f_C ,g_C:\pi(C)\rightarrow \exR$ with $f_C < g_C$. Onwards recall the notation fixed in the last paragraph of Section~\ref{sec:conv}. For every $C\in\CC$, consider the definable set $D(C)=\bigcap \{cl_\tau(f_C,g_{C'}) : C'\in \CC\}$. 

\begin{claim}\label{claim:D(C)}
For every $C\in \CC$ it holds that $D(C)\neq\emptyset$.
\end{claim}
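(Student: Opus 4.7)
The plan is to fix $C_0\in\CC$, construct a definable map $L:\pi(C_0)\to X$ representing the ``$\tau$-bottom face'' of $C_0$, and then apply the outer induction of the proposition (on $n=\min\{\dim C : C\in\CC\}$) to the family $\{L[\pi(C')] : C'\in\CC,\, C'\subseteq C_0\}$, whose members all have dimension at most $m-1<m=n$.

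First I would use downward directedness to reduce the intersection $D(C_0)=\bigcap\{cl_\tau(f_{C_0},g_{C'}) : C'\in\CC\}$ to the smaller intersection over $\{C'\in\CC : C'\subseteq C_0\}$. Concretely, for any $C'\in\CC$ there is $C''\in\CC$ with $C''\subseteq C_0\cap C'$; since $C''\subseteq C'$ one has $\pi(C'')\subseteq\pi(C')$ and $g_{C''}\leq g_{C'}$ on $\pi(C'')$, so $(f_{C_0},g_{C''})\subseteq (f_{C_0},g_{C'})$. Hence the cofinal subfamily indexed by $C'\subseteq C_0$ yields the same intersection, and it suffices to exhibit a point there.

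Next, for each $x\in\pi(C_0)$ the map $\gamma_x(t)=(x,t)$ is a definable curve in $C_0$ on $(f_{C_0}(x),g_{C_0}(x))$, which by curve-compactness $\tau$-converges as $t\to f_{C_0}(x)^+$ (interpreted as $t\to -\infty$ when $f_{C_0}$ is identically $-\infty$). Under Hausdorffness this $\tau$-limit is a unique point $L(x)$, and $L$ is $M$-definable since the property ``$y$ is the unique $\tau$-limit of $\gamma_x$'' is expressible via the definable basis for $\tau$. Under the alternative assumption of definable choice in $\MM$, the $\tau$-limit set of $\gamma_x$ is a uniformly definable nonempty family in $x$, and a definable $L$ is obtained by applying definable choice uniformly in $x$. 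For any $C'\in\CC$ with $C'\subseteq C_0$ and any $x\in\pi(C')$, the inequalities $f_{C_0}(x)\leq f_{C'}(x)<g_{C'}(x)$ ensure $\gamma_x(t)\in(f_{C_0},g_{C'})$ for all $t$ slightly above $f_{C_0}(x)$, whence $L(x)\in cl_\tau(f_{C_0},g_{C'})$ and therefore $L[\pi(C')]\subseteq cl_\tau(f_{C_0},g_{C'})$.

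Finally, the family $\{L[\pi(C')] : C'\in\CC,\, C'\subseteq C_0\}$ is a definable downward directed family of nonempty subsets of $X$ with every member of dimension at most $m-1$, so the outer induction hypothesis produces $p\in\bigcap\{cl_\tau(L[\pi(C')])\}$; since $cl_\tau(L[\pi(C')])\subseteq cl_\tau(f_{C_0},g_{C'})$, this $p$ lies in $D(C_0)$. The main care point in the plan is ensuring that $L$ is genuinely definable, which hinges on Hausdorffness (for unique limits) or on definable choice in $\MM$ (to pick from the nonempty $\tau$-limit set); a minor bookkeeping issue is the possibility $f_{C_0}\equiv -\infty$, which is absorbed by interpreting $t\to f_{C_0}(x)^+$ as $t\to -\infty$.
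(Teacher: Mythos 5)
Your proposal is correct and follows essentially the same route as the paper: both construct the definable set of $\tau$-limits of the vertical curves $t\mapsto\al x,t\ar$ at the bottom face of a fixed cell (using Hausdorffness for uniqueness or definable choice for a definable selection), note it has dimension at most $n-1$, and feed a downward directed family built from it into the outer induction on dimension. The only cosmetic difference is that you restrict to the cofinal subfamily $\{C'\in\CC : C'\subseteq C_0\}$ and take the images $L[\pi(C')]$, whereas the paper works with the sets $C^0\cap cl_\tau(f,g_{C'})$ for all $C'\in\CC$ directly; both yield the claim.
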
 
\begin{claimproof}
Let us fix $C=(f,g)$ and, for every $x\in\pi(C)$, let $c_x$ denote a point in $\taulim_{t\rightarrow f(x)^+} \al x,t\ar$. If $\tau$ is Hausdorff then, for every $x\in \pi(C)$, there is a unique choice for $c_x$, otherwise we use definable choice to pick $c_x$ definably in $x$. The definable set $C^0=\{c_x : x\in\pi(C)\}$ has dimension at most $\dim(\pi(C))=\dim(C)-1=n-1$. For every $C'=(f',g')\in\CC$, since $C\cap C' \neq\emptyset$, the definable set $\{x\in \pi(C)\cap \pi(C'): f(x)<g'(x)\} \supseteq \pi(C\cap C')$ is nonempty, and so $C^0 \cap cl_\tau(f,g')\neq \emptyset$. 

Note that, because $\CC$ is downward directed, the definable family of nonempty sets $\{C^0 \cap cl_\tau(f, g_{C'}) : C'\in\CC\}$ is downward directed. Since $\dim C^0 \leq n-1$, by inductive hypothesis there is a point that belongs in the $\tau$-closure of $C^0 \cap cl_\tau(f, g_{C'})$ --in particular in $cl_\tau(f, g_{C'})$-- for all $C'\in\CC$. Hence $D(C)\neq \emptyset$.
\end{claimproof}

Note that, for every $C\in\CC$, it holds that $D(C)\subseteq cl_\tau(C)$. We now show that the definable family of nonempty (by Claim~\ref{claim:D(C)}) $\tau$-closed sets $\{D(C) : C\in\CC\}$ is nested. Then, by Lemma~\ref{lemma:case_nested_family}, $\bigcap \{ D(C) : C\in\CC\}\neq \emptyset$, and thus $\bigcap\{ cl_\tau(C) : C\in\CC\}\neq \emptyset$. 

Let us fix $C_1=(f_1,g_1)$ and $C_2=(f_2,g_2)$ in $\CC$. We may partition $B=\pi(C_1)\cap \pi(C_2)$ into the definable sets 
\[
B_1=\{x\in B : f_1(x)\leq f_2(x)\} \text{ and } B_2=\{x\in B : f_1(x)>f_2(x)\}. 
\]
Since $\CC$ is a basis (i.e. a downward directed generating family) for a type in $S_m(M)$, there exists some $i\in\{1,2\}$ and $C\in \CC$ such that $\pi(C)\subseteq B_i$. Without loss of generality suppose that $i=1$, and let us fix $C_3\in\CC$ with $\pi(C_3)\subseteq B_1$. For an arbitrary set $C=(f,g)\in \CC$, let $C'=(f',g')\in\CC$ be such that $C'\subseteq C \cap C_3$. Then, clearly $(f_2,g')\subseteq (f_1,g')\subseteq (f_1,g)$. It follows that $D(C_2)\subseteq D(C_1)$. This completes the proof of the case $n=m>0$. 

Finally, we describe how the proof in the case $0<n<m$ can be obtained by adapting the arguments above. Fix $\hat{C}\in\CC$ with $\dim \hat{C}=n<m$ and a projection $\hat{\pi}:M^m\rightarrow M^n$ such that $\hat{\pi}|_{\hat{C}}:\hat{C}\rightarrow \hat{\pi}(\hat{C})$ is a bijection. By passing to a refining subfamily of $\CC$ if necessary we may assume that every set in $\CC$ is contained in $\hat{C}$. By definition of $n$ it follows that the definable downward directed family $\hat{\pi}(\CC)=\{\hat{\pi}(C) : C\in\CC\}$ contains only open cells in $M^n$. Note moreover that this family is a basis for a type in $S_n(M)$.


Set $h:=(\hat{\pi}|_{\hat{C}})^{-1}$ and, for every $C\in \CC$, let $\hat{\pi}(C)=(\hat{f}_C,\hat{g}_C)$. Then the proof in the case $n=m$ can be applied by letting $D(C)$ be the intersection $\bigcap \{cl_\tau h[(\hat{f}_C,\hat{g}_{C'})] : C'\in \CC\}$, and letting $C^0$ be a set given by points $c_x$ in $\taulim_{t\rightarrow \hat{f}(x)^+} h(x,t)$ chosen definably in $x\in\pi(\hat{\pi}(C))$.
  
\end{proof}

The following is an example of a non-Hausdorff definable topological space in the unbounded dense linear order $(M,<)$ that is curve-compact but not definably compact. In particular, the space admits a definable nested family of nonempty closed sets with empty intersection (see the comments above Lemma~\ref{lemma:case_nested_family}).

\begin{example}\label{example:space-compact}
Let $X=\{\al x,y\ar\in M^2 : y<x\}$. 
Consider the family $\BB$ of subsets of $X$ of the form
\begin{align*}
A(x',x'',x''',y',y'',y''')=&\{\al x,y\ar : y<y', y<x\} \cup \\
&\{\al x,y\ar : y''<y<y''' \wedge (y<x<y''' \vee x'<x<x'' \vee x'''<x)\}
\end{align*}
definable uniformly in $y'<y''<y'''<x'<x''<x'''$.

\begin{figure}[h!]
\centering
\includegraphics[scale=0.65]{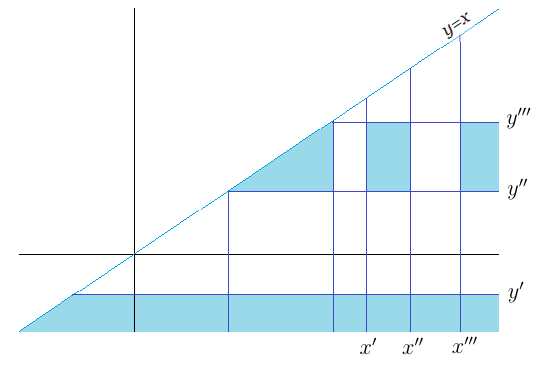}
\caption{\footnotesize Depicting (in blue) the set $A(x',x'',x''',y',y'',y''')$. \normalsize}
\end{figure}

Given any $A_0=A(x_0',x_0'',x_0''',y_0',y_0'',y_0''')$ and $A_1=A(x_1',x_1'',x_1''',y_1',y_1'',y_1''')$ in $\BB$, and any $\al x,y\ar\in A_1\cap A_2$, since every set in $\BB$ is open in the Euclidean topology, we may find $y''<y<y'''<x'<x<x''$ such that the box $(x',x'')\times (y'',y''')$ is a subset of $A_1\cap A_2$. Let $y'<\min\{y'',y'_0,y'_1\}$ and $x'''>\max\{x'',x'''_0,x'''_1\}$. Then $\al x,y\ar\in A(x',x'',x''',y',y'',y''')\subseteq A_1 \cap A_2$. Hence the family $\BB$ is a definable basis for a topology $\mytau$ on $X$. 

This topology is $T_1$, i.e. every singleton is $\mytau$-closed. For every $y\in M$, $\mytaulim_{t\rightarrow y^+} \al t, y\ar = \mytaulim_{t\rightarrow +\infty} \al t, y\ar = (M\times\{y\})\cap X$, and, for every $x\in M$, $\mytaulim_{t\rightarrow x^-} \al x, t\ar=(M\times\{x\})\cap X$ and $\mytaulim_{t\rightarrow -\infty} \al x, t\ar=X$. In particular, $\mytau$ is not Hausdorff. 

Now suppose that $\MM=(M,<)$. By quantifier elimination we know that in this structure any definable partial map $M\rightarrow M$ is piecewise either constant or the identity. Let $\gamma=(\gamma_0,\gamma_1):(a,b)\rightarrow X$ be an injective definable curve in $X$, where $\gamma_0$ and $\gamma_1$ are the projections to the first and second coordinates respectively. Let $I=(a,c)\subseteq (a,b)$ be an interval where $\gamma_0$ and $\gamma_1$ are either constant or the identity. Since the graph of the identity is disjoint from $X$ and $\gamma$ is injective it must be that $\gamma_i$ is constant and $\gamma_{1-i}$ is the identity on $I$ for some $i\in\{0,1\}$. 

Suppose that $\gamma_1|_I$ is constant with value $y$. Then, by the observations made above about the topology $\mytau$, the curve $\gamma$ satisfies that it $\mytau$-converges as $t\rightarrow a$ to either $\al a , y \ar$ (if $y<a$) or $(M\times\{y\}) \cap X$ (if $a=y$). On the other hand, if $\gamma_0|_I$ has a constant value $x$, then $\gamma$ $\mytau$-converges as $t\rightarrow a$ to either $\al x , a \ar$ (if $a>-\infty$) or the whole space $X$ (if $a=-\infty$). Analyzing the limit as $t\rightarrow b$ similarly allows us to conclude that $\gamma$ is $\mytau$-completable. Hence the space $(X,\mytau)$ is curve-compact.  

Meanwhile, the definable nested family of $\mytau$-closed sets $\{ X \cap (M \times [b,+\infty)) : b\in M\}$ has empty intersection. In particular, $(X,\mytau)$ is not definably compact. 
\end{example}

\begin{remark}\label{rem:Joh-question}
In \cite[Question 4.14]{johnson14} Johnson asks whether curve-compactness and filter-compactness are equivalent for o-minimal definable manifold spaces \cite[Chatper 10, Section 1]{dries98}. While these spaces are not necessarily Hausdorff observe that, because they admit a covering by finitely many Hausdorff open subspaces, every definable curve in them converges to only finitely many points. It follows that the proofs of Lemmas~\ref{lemma:choice_compact_spaces} and~\ref{lemma:case_nested_family} and Proposition~\ref{prop:curve_compact_iff_directed_compact} adapt to these spaces without any assumption that they are Hausdorff or that $\MM$ has definable choice. Hence we can answer Johnson's question in the affirmative. In fact, by Theorem~\ref{thm:intro_compactness}, every definable manifold space is curve-compact if and only if it is definably compact. 
\end{remark}

In the next remark we relate definable compactness to definable nets. 

\begin{remark} \label{rem:nets}
In~\cite[Section 6]{atw1}, Thomas, Walsberg and the author introduce the notion of \emph{definable net} $\gamma:(B,\preceq)\rightarrow (X,\tau)$ to mean a definable map from a definable directed set $(B, \preceq)$ into a definable topological space $(X,\tau)$. Recall that \emph{(Kelley) subnet} of $\gamma$ is a net $\gamma':(B',\preceq') \rightarrow (X,\tau)$ such that $\gamma'= \gamma \circ \mu$, where $\mu:B'\rightarrow B$ satisfies that, for every $b\in B$, there exists $c \in B'$ satisfying that $\mu(d) \succeq b$ for every $d \succeq c$. We say that such a net $\gamma'$ is definable if $(B',\preceq')$ and $\mu$ are definable.

Classically, a topological space is compact if and only if every net in it has a converging subnet. Following the classical proof of this result one may show that, in any model-theoretic structure (regardless of the axiom of o-minimality), filter-compactness implies that every definable net has a definable converging subnet (say net-compactness). The reverse implication follows whenever the structure has definable choice. In the o-minimal case one may easily show that net-compactness implies curve-compactness and so, by Theorem~\ref{thm:intro_compactness}, net-compactness implies definable compactness whenever the topology is Hausdorff or $\MM$ has definable choice. See~\cite[Corollary 44]{atw1} for a detailed proof of the equivalence between net-compactness, curve-compactness and filter-compactness in o-minimal expansions of ordered groups. Furthermore, one may show that Example~\ref{example:space-compact} above is not net-compact. The author is unaware of whether net-compactness implies definable compactness always within o-minimality. 

In general topology there are further characterizations of compactness, such as the property that every net has a cluster point, which are not addressed in this paper.  
\end{remark}

In light of Definition~\ref{dfn:df-compactness}, the following corollary is a direct consequence of Proposition~\ref{prop:pre_specialization_compactness}.

\begin{corollary}\label{cor:definably_compact_iff_compact}
Suppose that $\MM$ is an o-minimal expansion of $(\mathbb{R},<)$ and let $(X,\tau)$ be a definable topological space. Then $(X,\tau)$ is definably compact if and only if it is compact. 
\end{corollary}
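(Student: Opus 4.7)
The corollary follows almost immediately by chaining together Definition~\ref{dfn:df-compactness}, Theorem~\ref{thm:intro_compactness}, and Proposition~\ref{prop:pre_specialization_compactness}. Indeed, by the definition of definable compactness, $(X,\tau)$ is definably compact if and only if it satisfies condition~\eqref{itm:compactness_2} of Theorem~\ref{thm:intro_compactness}, i.e., if and only if it is type-compact. Under the assumption that $\MM$ expands $(\mathbb{R},<)$, Proposition~\ref{prop:pre_specialization_compactness} establishes the equivalence \eqref{itm:specialization_compactness_i}$\Leftrightarrow$\eqref{itm:specialization_compactness_iii}, which says precisely that type-compactness and classical compactness coincide. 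Composing these equivalences gives the result.

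Concretely, I would write the proof as two short implications. For \emph{definably compact $\Rightarrow$ compact}: by Definition~\ref{dfn:df-compactness} and Theorem~\ref{thm:intro_compactness}\eqref{itm:compactness_2}, $(X,\tau)$ is type-compact; then by Proposition~\ref{prop:pre_specialization_compactness}, specialized to the case $M=\mathbb{R}$, $(X,\tau)$ is compact. For the converse \emph{compact $\Rightarrow$ definably compact}: classical compactness trivially implies type-compactness (every definable type $p\in S_X(M)$ is a family of sets with the finite intersection property, and taking $\tau$-closures yields a family of closed sets with the finite intersection property, which by compactness has a common point that serves as a $\tau$-limit of $p$); then by Theorem~\ref{thm:intro_compactness} again, type-compactness is equivalent to definable compactness.

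There is no real obstacle here, since all the content of the corollary is packaged in the earlier results: Proposition~\ref{prop:pre_specialization_compactness} uses the Marker–Steinhorn theorem to obtain definability of arbitrary types over $\mathbb{R}$, and Theorem~\ref{thm:intro_compactness} ties type-compactness into the common definition. The only thing to check carefully is that the closing argument for the compact $\Rightarrow$ type-compact direction does not require an extra hypothesis beyond $T_1$, but this is immediate from the definition of $\tau$-limit of a type together with the finite intersection characterization of compactness.
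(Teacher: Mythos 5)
Your proof is correct and is essentially the paper's own argument: the paper also deduces the corollary directly from Definition~\ref{dfn:df-compactness} (equivalently Theorem~\ref{thm:intro_compactness}) together with the equivalence \eqref{itm:specialization_compactness_i}$\Leftrightarrow$\eqref{itm:specialization_compactness_iii} of Proposition~\ref{prop:pre_specialization_compactness}, whose proof already contains the ``compact $\Rightarrow$ type-compact'' direction. Your closing worry about $T_1$ is unnecessary: no separation axiom is needed, since the $\tau$-closed sets belonging to a complete type over $M$ form a nonempty family closed under finite intersection, and classical compactness alone yields a common point, which is a $\tau$-limit.
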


\begin{remark}\label{rem:pet-stein}
Part of~\cite[Question 2.5]{pet_stein_99} asks whether curve-compactness is equivalent to classical compactness among definable manifold spaces in o-minimal expansions of $(\mathbb{R},<)$.  Theorem~\ref{thm:intro_compactness}, Remark~\ref{rem:Joh-question} and Corollary~\ref{cor:definably_compact_iff_compact} above provide a positive answer to this question. The other part of~\cite[Question 2.5]{pet_stein_99} asks whether curve-compactness for o-minimal definable manifold spaces is maintained after passing to an o-minimal expansion. Recall that Example~\ref{example:space-compact} describes a (non-Hausdorff) definable topological space $(X,\mytau)$ in an arbitrary dense linear order $(M,<)$ which is curve-compact but not definably compact. The linear order $(M,<)$ could be chosen to have an o-minimal expansion $\NN$ with definable choice, in which case, by Theorem~\ref{thm:intro_compactness}, in $\NN$ the space $(X,\mytau)$ would lose the property of curve-compactness. Hence the above question has a negative answer when directed at all (non-Hausdorff) definable topological spaces.
On the other hand, by Corollary~\ref{cor:definably_compact_iff_compact}, definable compactness is always maintained after passing to an expansion in o-minimal structures over $(\mathbb{R}, <)$. It remains open however whether the same is true in arbitrary o-minimal structures.   
\end{remark}

Notice that all the characterizations of definable compactness in Theorem~\ref{thm:intro_compactness} are upfront expressible with infinitely many sentences (possibly with parameters) in the language of $\MM$, i.e. you have to check all relevant definable families of closed sets or all definable curves. Recall that, by~\cite[Theorem 2.1]{pet_stein_99} (or alternatively \cite[Proposition 3.10]{johnson14}) and Theorem~\ref{thm:intro_compactness}, a definable set with the Euclidean topology is definably compact if and only if it is closed and bounded. Being closed and bounded is expressible by a single formula (in the same parameters used to define the set). Furthermore, given a definable family of sets with the Euclidean topology, the subfamily of those that are closed and bounded is definable.  

We generalize this last observation to all definably compact spaces definable in o-minimal structures, showing along the way that definable compactness is expressible with a single formula (in the same parameters that define the topological space). We prove this for type-compactness and apply Theorem~\ref{thm:intro_compactness}. We use the following fact regarding strict pro-definability of the space of definable types from~\cite{cubides-ye-hils}, which can also be proved using Propositions~\ref{prop:existence_complete_directed_families_2.1} and~\ref{lemma:existence_complete_directed_families}, together with the Marker-Steinhorn Theorem~\cite[Theorem 2.1]{mark_stein_94}. Recall that throughout this section our structure $\MM$ is o-minimal. 

\begin{fact}[\cite{cubides-ye-hils} Theorems 2.4.8 and 3.2.1]\label{fact:strict-PDT}
For every formula $\varphi(x,y)$, there exists a formula $\psi(y,z)$ and a definable (over $\emptyset$) set $Z \subseteq M^{|z|}$ with the following properties. For every $d\in Z$, there exists a definable type $p(x)\in S_{|x|}(M)$ such that
\begin{equation}\label{eqn:Strict-PDT}
\{ b \in M^{|y|} : \varphi(x,b)\in p(x) \} = \psi(M,d),
\end{equation}
and vice versa for every definable type $p\in S_{|x|}(M)$ there exists some $d\in Z$ satisfying equation~\eqref{eqn:Strict-PDT}. 
\end{fact}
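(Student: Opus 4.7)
The plan is to combine Propositions~\ref{prop:existence_complete_directed_families_2.1} and~\ref{lemma:existence_complete_directed_families} to produce a finite catalogue of ``template'' formulas whose downward directed instances uniformly parametrize all definable types in $S_{|x|}(M)$, and then read off the $\varphi$-definition from the parametrization.

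First, for the given formula $\varphi(x,y)$, I would scrutinize the inductive proof of Proposition~\ref{prop:existence_complete_directed_families_2.1} to extract a finite list of formulas $\psi_1(x,z_1),\ldots,\psi_k(x,z_k)$, depending only on $\varphi$, with the property that every definable type $p\in S_{|x|}(M)$ admits a basis of the form $p|_{\psi_i}^1$ for some $i\leq k$ with $p|_{\psi_i}^1\vdash p|_\varphi$. The key observation is that the $\psi$ produced inductively in Proposition~\ref{prop:existence_complete_directed_families_2.1} branches only on finitely many structural features of $p$: the cell of a uniform cell decomposition compatible with $\varphi$, whether $p$ contains the graph of a definable function in the last coordinate, and (in the remaining case) the choice of lower versus upper endpoint formulas $\varphi_0,\varphi_1$. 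Uniform cell decomposition makes this branching uniform in $\varphi$, so finitely many templates suffice.

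Next, I would parametrize the definable downward directed consistent subfamilies of instances of each $\psi_i$. Given a formula carving out such a subfamily via parameters $e$, the conditions ``the family is downward directed'' and ``the family is consistent'' are first-order expressible---the latter because by Fact~\ref{thm:forking_definable_types} (in the form used in Corollary~\ref{prop:vc_tame_transversals}) consistency of the canonical family over the monster reduces to a non-forking condition, which is definable. By Proposition~\ref{lemma:existence_complete_directed_families}, every admissible such family extends to a definable type, and by Proposition~\ref{prop:existence_complete_directed_families_2.1} the $\varphi$-definition of that type is determined from the family by the condition $b\in \psi(M,d)$ iff there exists $c$ in the corresponding subfamily with $\psi_i(x,c)\vdash\varphi(x,b)$. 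Taking a disjoint union over $i\leq k$ and quotienting by the definable equivalence ``defines the same $\varphi$-type'' yields the required definable set $Z$ and formula $\psi(y,z)$ satisfying Equation~\eqref{eqn:Strict-PDT}. The Marker-Steinhorn theorem can be invoked at this stage to cross-check that the correspondence exhausts all definable types by realizing them in a tame elementary extension.

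The main obstacle is uniformity: ensuring that the list of templates $\psi_i$ can genuinely be chosen independently of $p$, and that the parameter set $Z$ is first-order definable rather than merely ind-definable. This requires a careful reworking of the recursion in Proposition~\ref{prop:existence_complete_directed_families_2.1} with explicit bookkeeping of how many branches are introduced at each step, so that the construction is uniformly first-order in $\varphi$ alone. Everything else amounts to routine manipulation of definable families once the uniform template list is in hand.
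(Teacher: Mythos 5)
The statement is quoted by the paper from the literature (Cubides Kovacsics--Hils--Ye) and is not proved there; the paper only remarks that it can alternatively be derived from Propositions~\ref{prop:existence_complete_directed_families_2.1} and~\ref{lemma:existence_complete_directed_families} together with the Marker--Steinhorn theorem, which is the route you attempt. Your sketch, however, has a genuine gap at its first and crucial step. The claim that the induction in Proposition~\ref{prop:existence_complete_directed_families_2.1} branches only on finitely many structural features of $p$, and hence yields a finite list of template formulas $\psi_1(x,z_1),\ldots,\psi_k(x,z_k)$ depending only on $\varphi$, is not true of that proof: in the case where $p$ contains the graph of a definable partial function $f\colon M^{|x|-1}\to M$, the formula produced is $(t=f(u))\wedge\psi_f(u,z_f)$, and $f$ is an arbitrary definable function determined by $p$, not an instance of any formula determined by $\varphi$; the same phenomenon recurs at every level of the recursion (including inside the inductive call on $\theta(u,y,y')$). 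So the $\psi$ of Proposition~\ref{prop:existence_complete_directed_families_2.1} genuinely depends on the type, and extracting a uniform finite template list is not a matter of bookkeeping --- that uniformity is essentially the content of the theorem being proved.

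Even granting such templates, your parametrization step has a second, related gap. You parametrize the admissible families by ``a formula carving out such a subfamily via parameters $e$'', but there is no single such formula: the sets $C_p=\{c:\psi_i(x,c)\in p\}$ are definable because each $p$ is definable, yet their defining formulas vary with $p$, and ranging over all possible formulas yields only an ind-definable parameter set --- precisely the obstacle you acknowledge and defer. Making this uniform is uniform definability of definable types (now for $\psi_i$ in place of $\varphi$), i.e.\ the statement at issue, so the argument is circular as written; in the alternative proof the paper alludes to, the Marker--Steinhorn theorem must carry this load rather than serve as a ``cross-check''. Two smaller points: the appeal to Fact~\ref{thm:forking_definable_types} to express consistency is both unnecessary and unjustified as stated (for a downward directed definable family, consistency is just nonemptiness of every member, which is first order), and no quotient by ``defines the same $\varphi$-type'' is needed, since the Fact does not require the parametrization by $Z$ to be injective.
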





\begin{proposition}\label{prop:comp-families}
Let $\{ (X_c, \tau_c) : c\in C\}$, be a definable family of topological spaces, i.e. there exists a definable set $B\subseteq M^{n+m}$ and some formula $\sigma(x, u, v)$, with $|u|=n$ and $|v|=m$, such that $C$ is the projection of $B$ to the last $m$ coordinates and, for every $c\in C$, the family $\{ \sigma(M, b, c) : b\in B_c\}$, where $B_c=\{ b \in M^n : \al b,c \ar \in B\}$, is a basis for $\tau_c$. Then the subfamily of all definably compact spaces is definable, i.e. there exists a definable set $D\subseteq C$ such that $(X_c,\tau_c)$ is definably compact if and only if $c\in D$. 

In particular, given any definable family of subsets of a definable topological space, the subfamily of those that are definably compact is definable.  
\end{proposition}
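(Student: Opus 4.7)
The plan is to combine two ingredients: the equivalence \eqref{itm:compactness_1}$\Leftrightarrow$\eqref{itm:compactness_2} in Theorem~\ref{thm:intro_compactness}, which equates definable compactness with type-compactness, and the strict pro-definability of the space of definable types given by Fact~\ref{fact:strict-PDT}, which lets us replace quantification over definable types by quantification over a definable parameter set $Z$. With both in hand, the condition ``every definable type concentrating on $X_c$ has a $\tau_c$-limit in $X_c$'' becomes a first-order condition on $c$, carving out the sought-after set $D\subseteq C$.

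To make this precise, I would apply Fact~\ref{fact:strict-PDT} not to $\sigma(x,u,v)$ alone but to a combined formula that also records the formula $\chi(x,v):=\exists b\,(\al b,v\ar\in B\wedge \sigma(x,b,v))$ defining the underlying set $X_v$. A convenient choice uses an auxiliary pair $(s_1,s_2)$ as a switch:
\[
\omega(x,u,v,s_1,s_2)\;:=\;(s_1\leq s_2 \wedge \sigma(x,u,v))\;\vee\;(s_1>s_2\wedge \chi(x,v)).
\]
Fact~\ref{fact:strict-PDT} applied to $\omega$ yields $\psi(u,v,s_1,s_2,z)$ and an $\emptyset$-definable $Z$ such that, writing $\Psi_\sigma(u,v,d):=\exists s\,\psi(u,v,s,s,d)$ and $\Psi_\chi(v,d):=\exists u,s_1,s_2\,(s_1>s_2\wedge\psi(u,v,s_1,s_2,d))$, for every definable type $p_d$ indexed by $d\in Z$ one has $\sigma(x,u,v)\in p_d\Leftrightarrow \Psi_\sigma(u,v,d)$ and $\chi(x,v)\in p_d\Leftrightarrow \Psi_\chi(v,d)$.

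A point $x\in X_c$ is a $\tau_c$-limit of $p_d$ exactly when every basic open $\sigma(M,b,c)$ containing $x$ satisfies $\sigma(x,b,c)\in p_d$. So $(X_c,\tau_c)$ is type-compact iff
\[
\forall d\in Z\,\bigl[\Psi_\chi(c,d)\to \exists x\,\bigl(\chi(x,c)\wedge \forall b\,(\al b,c\ar\in B\wedge \sigma(x,b,c)\to \Psi_\sigma(b,c,d))\bigr)\bigr],
\]
a first-order condition on $c$. This defines $D$, and by the cited equivalence $D$ is exactly the set of definably compact parameters. The ``in particular'' clause follows by noting that, given a definable family $\{Y_c:c\in C\}$ of subsets of a fixed definable topological space $(X,\tau)$ with basis $\{\sigma_0(M,b):b\in B_0\}$, the induced topologies $\tau|_{Y_c}$ carry the uniformly definable basis $\{\sigma_0(M,b)\cap Y_c : b\in B_0\}$, so the main clause applies to $\{(Y_c,\tau|_{Y_c}):c\in C\}$.

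The main obstacle I expect is arranging for a single application of Fact~\ref{fact:strict-PDT} to extract both the information about which basic opens are in $p_d$ and the concentration datum ``$X_c\in p_d$''. The naive application to $\sigma$ alone fails here: for instance, over $(M,<)$ with the Euclidean topology, the type at $+\infty$ concentrates on $M$ but contains no bounded basic open interval $(a,b)$, so $\sigma$-data alone does not determine concentration. The switch trick in $\omega$ folds both queries into one parametrization of $Z$, preserving the coherence between them which would be lost by applying Fact~\ref{fact:strict-PDT} separately to $\sigma$ and $\chi$.
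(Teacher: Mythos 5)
Your proposal is correct and is essentially the paper's argument: both reduce definable compactness to type-compactness via Theorem~\ref{thm:intro_compactness} and then use Fact~\ref{fact:strict-PDT} to replace quantification over definable types by a first-order quantification over the definable set $Z$, yielding the definable set $D$. The only difference is cosmetic: the paper applies Fact~\ref{fact:strict-PDT} to a formula defining a basis of \emph{closed} sets for $\tau_c$ and uses that a limit of a type is exactly a point in all basic closed sets in it, whereupon concentration on $X_c$ is automatic (any type containing a nonempty basic closed subset of $X_c$ already contains $X_c$), so your switch-variable encoding of the concentration datum, while valid, can be avoided.
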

\begin{proof}
Let $\varphi(x, u, v)$ be a formula such that, for every $c \in C \subseteq M^{|v|}$, the family $\{ \varphi(M, b, c) : b\in B_c \}$ is a basis of closed sets for $\tau_c$.
Recall that, as observed in the proof of Proposition~\ref{prop:pre_specialization_compactness}, a type has a limit if and only if the intersection of all \textbf{basic} closed sets in it is nonempty.

Let $\psi(u,v,z)$ and $Z \subseteq M^{|z|}$ be as given by Fact~\ref{fact:strict-PDT} with $y=(u,v)$. 
It follows that, for any fixed $c\in C$, the space $(X_c, \tau_c)$ is type-compact if and only if either $X_c$ is empty or the following holds:
\[
\forall z\in Z \, (\exists x \, \forall u \, ( \psi(u,c,z) \wedge u\in B_c \rightarrow \varphi(x,u,c))).
\]
Since the set $Z$ is definable, this completes the proof.
\end{proof}

In the case where $\MM$ is an o-minimal expansion of an ordered field, the completeness of the theory of tame pairs \cite{dries_lewenberg_95} can be used to circumvent some of the proofs in this paper. In particular Thomas, Walsberg and the author~\cite[Corollary 47]{atw1} use it to prove the equivalence between curve-compactness and type-compactness among definable topological spaces, drawing inspiration from Walsberg's previous proof~\cite[Proposition 6.6]{walsberg15} of the equivalence between curve-compactness and sequential compactness among definable metric spaces in the case where $\MM$ expands the field of reals.


\bibliographystyle{alpha}
\bibliography{mybib_types_transversals_compactness}

\newcommand{\etalchar}[1]{$^{#1}$}
\providecommand{\noopsort}[1]{}\newcommand{\SortNoop}[1]{}
\begin{thebibliography}{{\SortNoop{Dries}}vdDL95}

\bibitem[ADH{\etalchar{+}}16]{vc_density}
Matthias Aschenbrenner, Alf Dolich, Deirdre Haskell, Dugald Macpherson, and
  Sergei Starchenko.
\newblock Vapnik-{C}hervonenkis density in some theories without the
  independence property, {I}.
\newblock {\em Trans. Amer. Math. Soc.}, 368(8):5889--5949, 2016.

\bibitem[AF11]{aschen_fischer_11}
Matthias Aschenbrenner and Andreas Fischer.
\newblock Definable versions of theorems by {K}irszbraun and {H}elly.
\newblock {\em Proc. Lond. Math. Soc. (3)}, 102(3):468--502, 2011.

\bibitem[AG21a]{andujar_thesis}
Pablo And\'ujar~Guerrero.
\newblock {\em Definable Topological Spaces in O-minimal Structures}.
\newblock PhD thesis, Purdue University, 2021.

\bibitem[AG21b]{FTT}
Pablo And\'ujar~Guerrero.
\newblock Types, transversals and definable compactness in o-minimal
  structures.
\newblock {\em arXiv e-prints}, 2021.
\newblock arXiv:2111.03802.

\bibitem[AGJ22]{and-johnson}
Pablo And\'ujar~Guerrero and Will Johnson.
\newblock Around definable types in $p$-adically closed fields.
\newblock {\em arXiv e-prints}, 2022.
\newblock arXiv:2208.05815.

\bibitem[AGTW21]{atw1}
Pablo And\'ujar~Guerrero, Margaret Thomas, and Erik Walsberg.
\newblock Directed sets and topological spaces definable in o-minimal
  structures.
\newblock {\em J. London Math. Soc. (2)}, 104(3):989--1010, 2021.

\bibitem[CK12]{cher_kap_12}
Artem Chernikov and Itay Kaplan.
\newblock Forking and dividing in {${\rm NTP}_2$} theories.
\newblock {\em J. Symbolic Logic}, 77(1):1--20, 2012.

\bibitem[CKHY21]{cubides-ye-hils}
Pablo Cubides~Kovacsics, Martin Hils, and Jinhe Ye.
\newblock Beautiful pairs.
\newblock {\em arXiv e-prints}, 2021.
\newblock arXiv:2112.00651.

\bibitem[CS12]{cotter_star_12}
Sarah Cotter and Sergei Starchenko.
\newblock Forking in {VC}-minimal theories.
\newblock {\em J. Symbolic Logic}, 77(4):1257--1271, 2012.

\bibitem[CS15]{cher-sim-2}
Artem Chernikov and Pierre Simon.
\newblock Externally definable sets and dependent pairs {II}.
\newblock {\em Trans. Amer. Math. Soc.}, 367(7):5217--5235, 2015.

\bibitem[DG22]{dolich-good22}
Alfred Dolich and John Goodrick.
\newblock Tame topology over definable uniform structures.
\newblock {\em Notre Dame J. Form. Log.}, 63(1):51--79, 2022.

\bibitem[Dol04]{dolich04}
Alfred Dolich.
\newblock Forking and independence in o-minimal theories.
\newblock {\em J. Symbolic Logic}, 69(1):215--240, 2004.

\bibitem[{\SortNoop{Dries}}vdD98]{dries98}
Lou {\SortNoop{Dries}}~van~den Dries.
\newblock {\em Tame Topology and O-minimal Structures}, volume 248 of {\em
  London Mathematical Society Lecture Note Series}.
\newblock Cambridge University Press, Cambridge, 1998.

\bibitem[{\SortNoop{Dries}}vdDL95]{dries_lewenberg_95}
Lou {\SortNoop{Dries}}~van~den Dries and Adam~H. Lewenberg.
\newblock {$T$}-convexity and tame extensions.
\newblock {\em J. Symbolic Logic}, 60(1):74--102, 1995.

\bibitem[For]{fornasiero}
Antongiulio Fornasiero.
\newblock Definable compactness for topological structures.
\newblock Unpublished.

\bibitem[Fuj23]{fujita23}
Masato Fujita.
\newblock Definable compactness in definably complete locally o-minimal
  structures.
\newblock {\em arXiv e-prints}, 2023.
\newblock arXiv:2303.01644.

\bibitem[FZ80]{FZ}
J\"org Flum and Martin Ziegler.
\newblock {\em Topological model theory}, volume 769 of {\em Lecture Notes in
  Mathematics}.
\newblock Springer, Berlin, 1980.

\bibitem[HL16]{hruloeser16}
Ehud Hrushovski and Fran\c{c}ois Loeser.
\newblock {\em Non-archimedean tame topology and stably dominated types},
  volume 192 of {\em Annals of Mathematics Studies}.
\newblock Princeton University Press, Princeton, NJ, 2016.

\bibitem[JL10]{john_las_10}
H.~R. Johnson and M.~C. Laskowski.
\newblock Compression schemes, stable definable families, and o-minimal
  structures.
\newblock {\em Discrete Comput. Geom.}, 43(4):914--926, 2010.

\bibitem[Joh18]{johnson14}
Will Johnson.
\newblock Interpretable sets in dense o-minimal structures.
\newblock {\em J. Symb. Log.}, 83(4):1477--1500, 2018.

\bibitem[Kap22]{kaplan22}
Itay Kaplan.
\newblock A definable $(p,q)$-theorem for nip theories.
\newblock {\em arXiv e-prints}, 2022.
\newblock arXiv:2210.04551.

\bibitem[KM00]{kar_mac_00}
Marek Karpinski and Angus Macintyre.
\newblock Approximating volumes and integrals in o-minimal and {$p$}-minimal
  theories.
\newblock In {\em Connections between model theory and algebraic and analytic
  geometry}, volume~6 of {\em Quad. Mat.}, pages 149--177. Dept. Math., Seconda
  Univ. Napoli, Caserta, 2000.

\bibitem[Mat04]{matousek04}
Ji\v{r}\'{\i} Matou\v{s}ek.
\newblock Bounded {VC}-dimension implies a fractional {H}elly theorem.
\newblock {\em Discrete Comput. Geom.}, 31(2):251--255, 2004.

\bibitem[MS94]{mark_stein_94}
David Marker and Charles~I. Steinhorn.
\newblock Definable types in {$O$}-minimal theories.
\newblock {\em J. Symbolic Logic}, 59(1):185--198, 1994.

\bibitem[Pil87]{pillay87}
Anand Pillay.
\newblock First order topological structures and theories.
\newblock {\em J. Symbolic Logic}, 52(3):763--778, 1987.

\bibitem[Pil88]{pillay88}
Anand Pillay.
\newblock On groups and fields definable in {$o$}-minimal structures.
\newblock {\em J. Pure Appl. Algebra}, 53(3):239--255, 1988.

\bibitem[Pil04]{pillay-conj}
Anand Pillay.
\newblock Type-definability, compact {L}ie groups, and o-minimality.
\newblock {\em J. Math. Log.}, 4(2):147--162, 2004.

\bibitem[PP07]{pet_pillay_07}
Ya'acov Peterzil and Anand Pillay.
\newblock Generic sets in definably compact groups.
\newblock {\em Fund. Math.}, 193(2):153--170, 2007.

\bibitem[PS99]{pet_stein_99}
Ya'acov Peterzil and Charles Steinhorn.
\newblock Definable compactness and definable subgroups of o-minimal groups.
\newblock {\em J. London Math. Soc. (2)}, 59(3):769--786, 1999.

\bibitem[Sim15]{simon15}
Pierre Simon.
\newblock Invariant types in {NIP} theories.
\newblock {\em J. Math. Log.}, 15(2):1550006, 26, 2015.

\bibitem[SS14]{simon_star_14}
Pierre Simon and Sergei Starchenko.
\newblock On forking and definability of types in some {DP}-minimal theories.
\newblock {\em J. Symb. Log.}, 79(4):1020--1024, 2014.

\bibitem[SW19]{sim-wals19}
Pierre Simon and Erik Walsberg.
\newblock Tame topology over dp-minimal structures.
\newblock {\em Notre Dame J. Form. Log.}, 60(1):61--76, 2019.

\bibitem[Tho12]{thomas12}
Margaret E.~M. Thomas.
\newblock Convergence results for function spaces over o-minimal structures.
\newblock {\em J. Log. Anal.}, 4(1), 2012.

\bibitem[Wal15]{walsberg15}
Erik Walsberg.
\newblock On the topology of metric spaces definable in o-minimal expansions of
  fields.
\newblock {\em arXiv e-prints}, 2015.
\newblock arXiv:1510.07291.

\end{thebibliography}

\end{document}